\newtheorem{lemma}{Lemma}[section]
\newtheorem{theorem}[lemma]{Theorem}
\newtheorem{remark}[lemma]{Remark}
\newtheorem{coro}[lemma]{Corollary}
\newtheorem{definition}[lemma]{Definition}
\newtheorem{example}[lemma]{Example}
\title{ Levitan Almost Periodic Solutions of Linear Differential Equations}
\author{David ~Cheban}
\address[D. Cheban]{%
State University of Moldova\\ Faculty of Mathematics and
Informatics\\  Department of Mathematics\\ A. Mateevich Street 60\\
MD--2009 Chi\c{s}in\u{a}u, Moldova} \email[D.
Cheban]{cheban@usm.md, davidcheban@yahoo.com}
\date{\today}
\subjclass{34C27, 34K06, 39A24} \keywords{Levitan almost periodic
solution; linear differential/difference equation; common fixed
point for noncommutative affine semigroups of affine mappings}
\begin{document}
\begin{abstract}
{ The known Levitan's Theorem states that the linear differential
equation
$$
x'=A(t)x+f(t) \ \ \ (*)
$$
with Bohr almost periodic coefficients
$A(t)$ and $f(t)$ admits at least one Levitan almost periodic
solution if it has a bounded solution. The main assumption in this
theorem is the separation among bounded solutions of homogeneous
equations
$$
x'=A(t)x\ .\ \ \ (**)
$$
In this paper we prove that linear differential equation (*) with
Levitan almost periodic coefficients has a Levitan almost periodic
solution, if it has at least one bounded solution. In this case,
the separation from zero of bounded solutions of equation (**) is
not assumed. The analogue of this result for difference equations
also is given.

We study the problem of existence of Bohr/Levitan almost periodic
solutions for equation (*) in the framework of general
nonautonomous dynamical systems (cocycles).}

\end{abstract}

\maketitle

\section{Introduction}\label{Sec1}

This paper is dedicated to studying the problem of Levitan almost
periodicity of solutions for linear differential equation
\begin{equation}\label{eq01}
x'(t)=A(t)x(t)+f(t)
\end{equation}
with Levitan almost periodic in time coefficients $A(t)$ and
$f(t)$. We prove that if the functions $f$ and matrix-function
$A(t)$ are Levitan almost periodic and equation (\ref{eq01}) has a
bounded on real axis $\mathbb R$ solution, then equation
$(\ref{eq01})$ admits at least one Levitan almost periodic
solution.

Let $(X,\rho)$ be a complete metric space. Denote by $C(\mathbb
R,X)$ the space of all continuous functions $\varphi :\mathbb R
\to X$ equipped with the distance
$$
d(\varphi,\psi):=\sup\limits_{L>0}\min\{\max\limits_{|t|\le
L}\rho(\varphi(t),\psi(t)),L^{-1}\}.
$$
The space $(C(\mathbb R,X),d)$ is a complete metric space.

Let $h\in \mathbb R$ and $\varphi \in C(\mathbb R,X)$. Denote by
$\varphi^{h}$ the $h$-translation of function $\varphi$, i.e.,
$\varphi^{h}(t):=\varphi(t+h)$ for any $t\in\mathbb R$ and by
$\mathfrak N_{\varphi}:=\{\{h_k\}:\ \varphi^{h_k}\to \varphi\}$.
Note that the convergence $\varphi^{h_k}\to \varphi$ as $k\to
\infty$ means the convergence uniform on every compact
$[-l,l]\subset \mathbb R$ ($l>0$).

\begin{definition} \rm
Let $\varepsilon >0$. A number $\tau \in \mathbb R$ is called {\em
$\varepsilon$-almost period} of the function $\varphi$ if
$$
\rho(\varphi(t+\tau),\varphi(t))<\varepsilon
$$
or all $t\in\mathbb R$. Denote by $\mathcal
T(\varphi,\varepsilon)$ the set of $\varepsilon$-almost periods of
$\varphi$.
\end{definition}

\begin{definition} \rm (H. Bohr, 1923)
A function $\varphi \in C(\mathbb R,X)$ is said to be {\em Bohr
almost periodic} if the set of $\varepsilon$-almost periods of
$\varphi$ is {\em relatively dense} for each $\varepsilon >0$,
i.e., for each $\varepsilon >0$ there exists $l=l(\varepsilon)>0$
such that $\mathcal T(\varphi,\varepsilon)\cap
[a,a+l]\not=\emptyset$ for all $a\in\mathbb R$.
\end{definition}

\begin{definition} \rm
Let $\varphi \in C(\mathbb R,X)$ and $\psi \in C(\mathbb R,Y)$. A
function $\varphi\in C(\mathbb R,X)$ is called {\em Levitan almost
periodic} ($N$-almost periodic) if there exists a Bohr almost
periodic function $\psi \in C(\mathbb R,Y)$ such that $\mathfrak
N_{\psi}\subseteq \mathfrak N_{\varphi}$, where $Y$ is some metric
space (generally speaking $Y\not= X$).
\end{definition}

\begin{remark} 1. Every Bohr almost periodic function is Levitan almost
periodic.

2. The function $\varphi \in C(\mathbb R,\mathbb R)$ defined by
equality
$$
\varphi(t)=\dfrac{1}{2+\cos t +\cos \sqrt{2}t}
$$
is Levitan almost periodic, but it is not Bohr almost periodic
(because it is not bounded).
\end{remark}

In 1939 B. M. Levitan \cite{Lev_1939} published his paper, where
he studied the problem of existence of Levitan almost periodic
solutions of equation
\begin{equation}\label{eq0.1}
x'=A(t)x+f(t)\ \ (x\in\mathbb R^{n})
\end{equation}
with the matrix $A(t)$ and vector-function $f(t)$ Levitan almost
periodic.

Along with equation (\ref{eq0.1}), consider the homogeneous
equation
\begin{equation}\label{eq0.2}
x'=A(t)x\ .
\end{equation}

\begin{theorem}\label{tF} (Levitan's theorem
\cite{Lev_1939}-\cite{Lev-Zhi}) Linear differential equation
(\ref{eq0.1}) with Bohr almost periodic coefficients admits at
least one Levitan almost periodic solution, if it has a bounded
solution and each bounded on $\mathbb R$ solution $\varphi(t)$ of
equation (\ref{eq0.2}) is separated from zero, i.e.
\begin{equation}\label{eq0.4}
\inf\limits_{t\in \mathbb R}|\varphi(t)|>0.\nonumber
\end{equation}
\end{theorem}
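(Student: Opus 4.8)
The plan is to single out, among all bounded solutions of (\ref{eq0.1}), the one of least supremum norm and to show that the separation hypothesis forces this extremal solution to be \emph{compatible} with the coefficients, which is precisely Levitan almost periodicity. Throughout write $F:=(A,f)$, a Bohr almost periodic function valued in the product metric space $\mathrm{Mat}_n(\mathbb R)\times\mathbb R^n$, let $|\cdot|$ denote the Euclidean norm on $\mathbb R^n$, and $\|\psi\|:=\sup_{t\in\mathbb R}|\psi(t)|$. By the definition of Levitan almost periodicity, applied with the witness $\psi=F$ (Bohr almost periodic by hypothesis) and recalling that $\mathfrak N_F$ is the set of sequences $\{t_k\}$ with $F^{t_k}\to F$ uniformly on compacts, it suffices to produce a bounded solution $\varphi^\ast$ of (\ref{eq0.1}) with $\mathfrak N_F\subseteq\mathfrak N_{\varphi^\ast}$. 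To build $\varphi^\ast$ I would set $\beta:=\inf\{\|\psi\|:\ \psi\text{ a solution of (\ref{eq0.1}) bounded on }\mathbb R\}$, the infimum being over a nonempty set by hypothesis. For a minimizing sequence $\psi_k$ the bound $\|\psi_k\|\le\beta+1$ together with $\psi_k'=A\psi_k+f$ gives uniform bounds on $\psi_k$ and $\psi_k'$, so by Arzel\`a--Ascoli a subsequence converges, uniformly on compacts, to a solution $\varphi^\ast$ of (\ref{eq0.1}) with $\|\varphi^\ast\|=\beta$. Denote by $\mathcal M$ the (nonempty, convex) set of all bounded solutions of norm $\beta$.

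The heart of the proof is that $\mathcal M$ is a singleton, and this is where (\ref{eq0.4}) enters. Suppose $\varphi_1,\varphi_2\in\mathcal M$; their difference $v:=\varphi_1-\varphi_2$ is a bounded solution of the homogeneous equation (\ref{eq0.2}), so if $v\not\equiv0$ the separation hypothesis yields $m:=\inf_{t}|v(t)|>0$. The midpoint $w:=\tfrac12(\varphi_1+\varphi_2)$ is again a bounded solution with $\|w\|\le\beta$, hence $w\in\mathcal M$ and $\|w\|=\beta$. But the parallelogram identity in $\mathbb R^n$ gives, for every $t$,
\[
\Big|\tfrac{\varphi_1(t)+\varphi_2(t)}{2}\Big|^2=\tfrac12|\varphi_1(t)|^2+\tfrac12|\varphi_2(t)|^2-\tfrac14|v(t)|^2\le\beta^2-\tfrac{m^2}{4},
\]
so $\|w\|^2\le\beta^2-m^2/4<\beta^2$, a contradiction. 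Hence $v\equiv0$ and $\mathcal M=\{\varphi^\ast\}$.

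It then remains to deduce $\mathfrak N_F\subseteq\mathfrak N_{\varphi^\ast}$. Let $\{t_n\}\in\mathfrak N_F$, i.e. $A^{t_n}\to A$ and $f^{t_n}\to f$ uniformly on compacts. The translates $\varphi^{\ast t_n}$ satisfy $\|\varphi^{\ast t_n}\|=\beta$ and $(\varphi^{\ast t_n})'=A^{t_n}\varphi^{\ast t_n}+f^{t_n}$, so they are relatively compact; passing to the limit in the corresponding integral equations, any subsequential limit $\psi$ solves (\ref{eq0.1}) and has $\|\psi\|\le\beta$, hence is a bounded solution of the \emph{original} equation of norm exactly $\beta$, that is $\psi\in\mathcal M$. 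By the uniqueness just proved $\psi=\varphi^\ast$; since every subsequential limit equals $\varphi^\ast$ and the family is precompact, $\varphi^{\ast t_n}\to\varphi^\ast$. Thus $\{t_n\}\in\mathfrak N_{\varphi^\ast}$, so $\mathfrak N_F\subseteq\mathfrak N_{\varphi^\ast}$ and $\varphi^\ast$ is Levitan almost periodic, which is the assertion.

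The main obstacle is the uniqueness step, and I expect two points to need care. First, the argument rests on the Euclidean (inner-product) norm through the parallelogram identity, so the geometry of $\mathbb R^n$ is used essentially, and one must ensure the separation lower bound $m$ is uniform in $t$ — which is exactly the content of (\ref{eq0.4}). Second, and conceptually more important, the hypothesis concerns only bounded solutions of the single homogeneous equation (\ref{eq0.2}); accordingly uniqueness of the minimal solution, and hence compatibility, is obtained only over the original coefficients $F$. This is precisely why the conclusion is Levitan (rather than Bohr) almost periodicity: to upgrade to Bohr one would need the stronger Favard separation holding over the whole hull of $A$, which is not assumed here.
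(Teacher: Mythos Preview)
The paper does not supply its own proof of Theorem~\ref{tF}; it is stated as a classical result with references to Levitan \cite{Lev_1939}--\cite{Lev-Zhi}, serving only as background against which the paper's improvement (removal of the separation hypothesis) is presented. So there is no ``paper's proof'' to compare against.

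Your argument is correct and is essentially the classical Favard--Levitan minimal-norm method: select the bounded solution of least sup-norm, use the parallelogram law together with the separation hypothesis (\ref{eq0.4}) to force uniqueness, and then observe that uniqueness of the minimizer forces compatibility $\mathfrak N_{(A,f)}\subseteq\mathfrak N_{\varphi^\ast}$. All steps are sound, including your closing remark that only separation over the single equation (\ref{eq0.2}) is used, which is exactly why one obtains Levitan rather than Bohr almost periodicity. It is worth noting, for context, that the paper's own contribution (Theorems~\ref{th12.3.8*} and~\ref{thB2}) bypasses the uniqueness step entirely by a fixed-point argument for the Ellis-type semigroup $\mathcal E_{y_0}^{+}$ acting on a compact convex set; this is what allows the separation hypothesis to be dropped, at the cost of a more abstract machinery than your direct minimization.
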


Denote by $$H(A,f):=\overline{\{(A^{h},f^{h})|\ h\in R\}},$$ where
by bar is denoted the closure in the space $C(\mathbb R,[\mathbb
R^n])\times C(R,\mathbb R^n),$ where $[\mathbb R^n]$ the space of
all linear operators acting on the space $\mathbb R^n$.

\begin{theorem}\label{thZ} (Zhikov's theorem \cite{Zhi_1971})
Linear differential equation (\ref{eq0.1}) with Bohr almost
periodic coefficients admits at least one Levitan almost periodic
"limiting" solution, if it has a bounded solution, i.e., there
exists a limiting equation
\begin{equation}\label{eqZ1}
x'(t)=B(t)x(t)+g(t),
\end{equation}
where $(B,g)\in H(A,f)$.
\end{theorem}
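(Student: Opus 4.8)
The plan is to realize equation (\ref{eq0.1}) as a linear nonautonomous dynamical system over the Bebutov shift dynamical system on the hull $H(A,f)$, and then exploit compactness of the hull together with the affine structure of the solution operators. First I would set $\mathcal Y := H(A,f)$ with the shift flow $\sigma_t$, and for each $y=(B,g)\in\mathcal Y$ let $U(t,y)$ denote the Cauchy operator of the homogeneous equation $x'=B(t)x$ and $\varphi(t,v,y)$ the solution of $x'=B(t)x+g(t)$ with $\varphi(0,v,y)=v$; the map $(v,y)\mapsto \varphi(t,v,y)$ is affine in $v$ and gives a linear cocycle over $(\mathcal Y,\sigma)$. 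Boundedness of some solution of (\ref{eq0.1}) means the forward orbit of the corresponding point in the extended phase space $\mathbb R^n\times\mathcal Y$ is bounded, hence (using the compactness of $\mathcal Y$ and continuous dependence) precompact; its $\omega$-limit set $\mathcal M$ is a nonempty compact invariant set that projects onto a minimal set $\mathcal Y_0\subseteq\mathcal Y$ containing some limiting point $y_0=(B,g)$.

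The key step is to produce, over this minimal base, a section of $\mathcal M$ that is invariant under the flow; concretely, I want a bounded solution of the limiting equation (\ref{eqZ1}) whose graph lies in $\mathcal M$. For a fixed $y_0$ in the minimal set, the fibre $\mathcal M_{y_0}:=\{v:(v,y_0)\in\mathcal M\}$ is a nonempty compact convex subset of $\mathbb R^n$ (convexity because the cocycle is affine and $\mathcal M$, being an $\omega$-limit set of a bounded orbit, can be taken convex after passing to the closed convex hull of the orbit, which remains invariant and compact by affineness). The time-$t$ maps of the cocycle restricted over the minimal set form an affine semigroup of affine self-maps of this compact convex body; by a Markov–Kakutani / Ryll-Nardzewski type fixed point argument — this is exactly the "common fixed point for noncommutative affine semigroups of affine mappings" advertised in the keywords — there is a point $v_0\in\mathcal M_{y_0}$ fixed by the return structure, equivalently a complete bounded trajectory $\bar\varphi(t)$ through $v_0$ lying in $\mathcal M$ over the minimal set. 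This $\bar\varphi$ solves (\ref{eqZ1}).

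Finally I would show $\bar\varphi$ is Levitan almost periodic. Since $y_0$ lies in a minimal subset of the hull of a Bohr almost periodic function, the base motion $\sigma_t y_0$ is Bohr almost periodic in $C(\mathbb R,\cdot)$; it suffices to check $\mathfrak N_{\sigma_\cdot y_0}\subseteq\mathfrak N_{\bar\varphi}$, i.e. that whenever the shifts of $(B,g)$ along a sequence $\{h_k\}$ converge back to $(B,g)$, the shifts $\bar\varphi^{h_k}$ converge (uniformly on compacta) to $\bar\varphi$. This is where the construction must be tightened: convergence of the base is not automatically inherited by the fibre, so I would need the chosen complete trajectory to be a \emph{distinguished} point of $\mathcal M_{y_0}$ — e.g. the unique point selected by the fixed-point/barycentre procedure, whose uniqueness (by the affine-semigroup fixed point theorem applied fibrewise) forces continuity of $y\mapsto$ (that point) on the minimal set, hence $\bar\varphi^{h_k}=\varphi(\cdot\,,v_0^{(k)},\sigma_{h_k}y_0)\to\varphi(\cdot\,,v_0,y_0)=\bar\varphi$. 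The main obstacle I anticipate is precisely this last point: engineering the fixed-point selection so that it is single-valued and continuous over the minimal base, which is what upgrades "a bounded limiting solution" to "a Levitan almost periodic limiting solution". The affineness of the cocycle (no hypothesis of separation from zero needed) is what makes a Markov–Kakutani argument available here in place of Levitan's exponential-dichotomy-style separation condition.
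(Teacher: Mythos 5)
Your overall architecture is the same as the paper's: pass to the hull, view the equation as an affine cocycle $\psi$ over the shift dynamical system on $H(A,f)$, take the closed convex hull $M=\overline{co}\,K_{y_0}$ of the fibre of the $\omega$-limit set of the bounded orbit, find a common fixed point of an affine semigroup acting on $M$, and conclude Levitan almost periodicity via comparability of recurrence (this is exactly the content of Lemma \ref{lB2}, Theorem \ref{thB2} and Corollary \ref{corB2.1*}, which in fact yield the stronger statement for the original equation, not only for a limiting one). However, two steps of your sketch have genuine gaps. First, the semigroup that acts on the fibre $M\subseteq X_{y_0}$ is \emph{not} the family of time-$t$ maps of the cocycle (those send $X_{y_0}$ into $X_{\sigma(t,y_0)}$); it is the Ellis-type semigroup $\mathcal{E}^{+}_{y_0}$ of pointwise limits of $\pi^{t_n}|_{X_{y_0}}$ along sequences $\{t_n\}\in\mathfrak N^{+\infty}_{y_0}$, whose nonemptiness, compactness and closure under composition require the conditional-compactness argument of Lemma \ref{l9.2.4}. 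More seriously, a ``Markov--Kakutani / Ryll-Nardzewski type argument'' is not available off the shelf: Markov--Kakutani needs commutativity, and Ryll-Nardzewski needs the action to be distal (noncontracting), which is precisely what may fail when bounded solutions of the homogeneous equation are not separated from zero. Indeed, a compact semigroup of continuous affine self-maps of a compact convex set need not have a common fixed point at all: the closure in $[0,1]^{[0,1]}$ of the semigroup generated by $x\mapsto x/2$ and $x\mapsto x/2+1/2$ is such a semigroup and contains all constant maps, hence admits no common fixed point. So the fixed-point step is the actual mathematical content and must exploit specific properties of the semigroup at hand; the paper devotes Section \ref{Sec3} (Theorems \ref{th12.3.8} and \ref{th12.3.8*}) to it, and merely naming a fixed-point principle does not discharge it.

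Second, your proposed route to Levitan almost periodicity --- make the fixed point unique in each fibre and prove continuity of the selection $y\mapsto v_0(y)$ over a minimal set --- is neither carried out nor necessary. The efficient argument is: once $\bar{x}=(\bar{u},y_0)$ is fixed by \emph{every} $\xi\in\mathcal{E}^{+}_{y_0}$, take any $\{t_n\}\in\mathfrak N^{+\infty}_{y_0}$; by conditional compactness the sequence $\{\pi(t_n,\bar{x})\}$ is precompact, and every subsequential limit has the form $\xi(\bar{x})=\bar{x}$ for some $\xi\in\mathcal{E}^{+}_{y_0}$, so the whole sequence converges to $\bar{x}$. Hence $\mathfrak N^{+\infty}_{y_0}\subseteq\mathfrak N^{+\infty}_{\bar{x}}$, and Shcherbakov's comparability theorems (Theorems \ref{thPC3} and \ref{thPC4}) convert this into Levitan almost periodicity of $t\mapsto\psi(t,\bar{u},y_0)$. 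No continuity of a fibrewise selection over the base is needed; what does the work is that the fixed point is common to the entire limit semigroup $\mathcal{E}^{+}_{y_0}$, not merely to a family of return maps.
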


Denote by $\Omega :=\{(B,g)\in H(A,f)|$ such that equation
$(\ref{eqZ1})$ has a Levitan almost periodic solution $\}$. Zhikov
proved that the set $\Omega$ has a second category of Baire.

\textbf{Open problem }(V. V. Zhikov \cite{Zhi_1971}). Is equality
$\Omega =H(A,f)$ true? In other words. Can we state that every
equation (\ref{eq0.1}) admits at least one Levitan almost periodic
solution if (\ref{eq0.1}) has a bounded on $\mathbb R$ solution?

From our main result it follows the positive answer to this
question. Namely, the following statement holds.

\begin{theorem} Suppose that the following conditions are
fulfilled:
\begin{enumerate}
\item the matrix $A(t)$ and function $f(t)$ are Levitan almost
periodic; \item equation
\begin{equation}\label{eq1}
x'(t)=A(t)x(t)+f(t) \ \  (x\in \mathbb R^{n})
\end{equation}
has a bounded on $\mathbb R$ solution.
\end{enumerate}
Then equation (\ref{eq1}) has at least one Levitan almost periodic
solution.
\end{theorem}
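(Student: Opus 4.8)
The plan is to work in the framework of nonautonomous dynamical systems (cocycles) over the hull of the coefficients, and to reduce the existence of a Levitan almost periodic solution to the existence of a common fixed point for a noncommutative affine semigroup of affine maps. First I would form the hull $Y:=H(A,f)$ of the Levitan almost periodic pair $(A,f)$. Because $(A,f)$ is Levitan almost periodic, there is a Bohr almost periodic function $\psi$ with $\mathfrak N_\psi\subseteq\mathfrak N_{(A,f)}$; let $(\mathcal Y,\sigma)$ be the minimal (compact, hence "Bohr-type") flow generated by $\psi$, and realize $Y$ together with the shift flow as an extension of $\mathcal Y$ via the natural factor map $p:Y\to\mathcal Y$ sending $\sigma(t)(A,f)\mapsto\sigma(t)\psi$. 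The key structural fact to record is that the fibre $Y_0:=p^{-1}(\psi)$ over the base point is a compact set, and any point of $Y_0$ corresponds to a "limiting" equation $x'=B(t)x+g(t)$ with $(B,g)$ sharing all the frequency-module properties of $(A,f)$; Levitan almost periodicity of a solution $\varphi$ amounts exactly to $\mathfrak N_\psi\subseteq\mathfrak N_\varphi$, i.e. $\varphi$ being constant on the trajectory issued over the fibre $Y_0$.

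Second, I would use hypothesis (ii): let $\varphi_0$ be a solution of \eqref{eq1} bounded on $\mathbb R$, say $\sup_{t\in\mathbb R}|\varphi_0(t)|=:r<\infty$. Passing to limits along sequences $\{t_k\}$ realizing the shifts, the closed ball $B_r\subset\mathbb R^n$ can be taken as a common bounded invariant region: for each point $y\in Y$ the shifted solution stays in $B_r$, and the cocycle map $U_y^{t}:\mathbb R^n\to\mathbb R^n$ (solving the linear equation over the base trajectory through $y$) is affine. Restricting to the fibre $Y_0$ and to a suitable discrete or continuous time set, one obtains a semigroup $\mathcal S$ of affine continuous self-maps of the compact convex set $K:=B_r$ (after averaging/convexifying the orbit closure of $\varphi_0$ inside $C(\mathbb R,\mathbb R^n)$, one actually works with the compact convex set of bounded solutions-restrictions). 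The crucial point is that this semigroup need not be commutative — this is precisely why Levitan's separation condition can be dropped — so I would invoke a common-fixed-point theorem for noncommutative (affine) semigroups acting on a compact convex set by affine continuous maps (a Markov–Kakutani / Ryll-Nardzewski–type statement, the "common fixed point for noncommutative affine semigroups" advertised in the keywords), which is available because the maps are affine and the set is compact convex; this yields a point $\bar\varphi$ fixed by the whole semigroup.

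Third, I would translate the common fixed point back: a function $\bar\varphi\in C(\mathbb R,\mathbb R^n)$ that is simultaneously invariant under all the transition operators along the fibre $Y_0$ is exactly a bounded solution of \eqref{eq1} whose hull is trivial over $Y_0$, i.e. satisfies $\mathfrak N_\psi\subseteq\mathfrak N_{\bar\varphi}$, which by Definition is the statement that $\bar\varphi$ is Levitan almost periodic (with $\psi$ as the accompanying Bohr almost periodic function). Finally I would check the remaining regularity: $\bar\varphi$ is $C^1$ and genuinely solves the original equation, which follows from the uniform convergence on compacta of the shifted solutions together with the equation $\bar\varphi' = A\bar\varphi + f$ being preserved under such limits.

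The main obstacle I anticipate is the noncommutative common-fixed-point step: one must set up the compact convex configuration correctly (the right space on which the affine semigroup acts — most naturally the weak-* or compact-open closure of $\{\varphi_0^{h}: h\in\mathfrak N_\psi\text{-related shifts}\}$ inside the bounded continuous functions, which is compact by the Arzelà–Ascoli/Bolzano–Weierstrass argument using the bound $r$ and the equation for equicontinuity) and verify that every map in the semigroup is affine, continuous, and maps this set into itself. Once that compact convex affine picture is in place, the abstract fixed-point theorem does the work; the rest is bookkeeping about hulls and the definition of Levitan almost periodicity.
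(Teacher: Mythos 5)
Your overall architecture --- pass to the hull $H(A,f)$, view the equation as an affine cocycle over the shift flow, use the bounded solution to produce a compact convex set on which an Ellis-type semigroup of continuous affine maps acts, extract a common fixed point, and translate it back into a compatible (hence Levitan almost periodic) solution --- is exactly the paper's route (Lemma \ref{lB2}, Theorem \ref{thB2}, Theorem \ref{thPC4}, Corollary \ref{corB2.1*}). The gap is in the step you yourself flag as the main obstacle: the common fixed point. You assert that a fixed point theorem for a noncommutative semigroup of continuous affine self-maps of a compact convex set ``is available because the maps are affine and the set is compact convex.'' That is false in general: an affine action of a nonamenable (semi)group on a compact convex set need not have a common fixed point, and amenability is exactly the dividing line. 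The two candidates you name do not apply either: Markov--Kakutani requires commutativity, and Ryll-Nardzewski requires the action to be distal (noncontracting). Worse, distality of the semigroup generated by the transition operators of the homogeneous equation amounts to Levitan's separation condition $\inf_{t}|\varphi(t)|>0$ for bounded homogeneous solutions --- precisely the hypothesis the theorem is designed to remove. So in the interesting case the off-the-shelf fixed point theorems you invoke are unavailable, and this step is a genuine hole.

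The paper closes the hole with two ingredients absent from your sketch. First, Lemma \ref{l9.2.4} shows that the Ellis semigroup $\mathcal{E}^{+}_{y_0}$ of pointwise limits of $\pi^{t_n}|_{X_{y_0}}$ along sequences $\{t_n\}\in \mathfrak N_{y_0}^{+\infty}$ is itself \emph{compact} as a subset of $M^{M}$ (Tychonoff, using conditional compactness of the orbit); this compactness of the acting semigroup, not just of the set acted on, is the substitute for commutativity or distality. Second, Theorems \ref{th12.3.8} and \ref{th12.3.8*} prove a bespoke common fixed point theorem for a compact semigroup of continuous quasi-affine self-maps of a compact convex subset of a finite-dimensional Banach space: one minimizes $\ell(x)=\sup_{\xi\in E}\rho(\xi(x),x_0)$ over $M$, shows the set of minimizers is $E$-invariant, and uses strong convexity of the Euclidean norm to show it is a singleton, which is then the common fixed point. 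To repair your argument you would need either to reproduce such a theorem or to verify a hypothesis (commutativity, distality, amenability) that is not available here; as written, the proposal does not go through.
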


\begin{coro}\label{corI1} Under the conditions of Theorem if the
coefficients $A(t)$ and $f(t)$ are Bohr almost periodic, then
equation (\ref{eq1}) admits at least one Levitan almost periodic
solution.
\end{coro}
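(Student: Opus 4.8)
The plan is to deduce this corollary directly from the main Theorem, since it is precisely the Bohr specialization of that more general statement. The only fact one needs is the inclusion of function classes already recorded in the Remark above: every Bohr almost periodic function is Levitan almost periodic. Thus, if the matrix $A(t)$ and the vector-function $f(t)$ are Bohr almost periodic, they are in particular Levitan almost periodic, so hypothesis (1) of the Theorem is met. Hypothesis (2), the existence of a bounded on $\mathbb R$ solution, is carried over verbatim from the conditions of the Theorem. Applying the Theorem to equation (\ref{eq1}) then yields a Levitan almost periodic solution, which is exactly the assertion.

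Conceptually, the point of isolating this case is that it settles Zhikov's open problem in the affirmative. Levitan's Theorem \ref{tF} guaranteed a Levitan almost periodic solution for Bohr almost periodic coefficients only under the extra separation-from-zero hypothesis on the bounded solutions of the homogeneous equation (\ref{eq0.2}), while Zhikov's Theorem \ref{thZ} removed that hypothesis but produced only a limiting equation possessing such a solution, leaving open whether the original equation does. Since the main Theorem dispenses with the separation condition entirely under the weaker Levitan almost periodicity of the coefficients, its Bohr specialization asserts that every equation (\ref{eq1}) with Bohr almost periodic coefficients and a bounded solution has a Levitan almost periodic solution; in Zhikov's notation this is exactly the equality $\Omega=H(A,f)$.

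Accordingly, there is no genuine obstacle internal to the corollary: its proof is the single implication above, resting only on $\textrm{Bohr}\subseteq\textrm{Levitan}$. All the difficulty is located upstream, in the proof of the Theorem itself --- specifically in establishing the existence of a Levitan almost periodic solution without the separation assumption, which (to judge from the paper's keywords) is to be reduced to locating a common fixed point of a noncommutative affine semigroup of affine mappings acting on the set of bounded solutions. Once that Theorem is in hand, the present corollary follows immediately.
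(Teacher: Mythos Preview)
Your proposal is correct and matches the paper's approach: the paper gives no separate proof for this corollary, treating it as an immediate consequence of the main Theorem via the inclusion $\text{Bohr}\subset\text{Levitan}$ recorded in the preceding Remark. Your contextual remarks about Levitan's and Zhikov's theorems and the resolution of the open problem $\Omega=H(A,f)$ also accurately reflect the paper's surrounding discussion.
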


\begin{remark}\label{remI1} 1. Under the conditions of Corollary
\ref{corI1}, equation (\ref{eq1}) in general may not have Bohr
almost periodic solutions \cite{Joh,Zh-Le} (see also
\cite[ChVIII]{Lev-Zhi} and \cite{OT}).

2. Notice that Corollary \ref{corI1} before was established by B.
M. Levitan \cite{Lev_1939}-\cite{Lev_1953} with the additional
assumption that the bounded solutions of equation
\begin{equation}\label{eq2}
x'(t)=A(t)x(t) \nonumber
\end{equation}
are separated from zero.
\end{remark}

This paper is organized as follow.

In Section \ref{Sec2} we collect some notions and facts from the
theory of dynamical systems (some classes of Poisson stable
motions, comparability by character of recurrence of Poisson
stable motions, nonautonomous dynamical systems, conditionally
compactness).

Section \ref{Sec3} is dedicated to the study the problem of
existence of a common fixed point for noncommutative affine
semigroup of mappings (Theorems \ref{th12.3.8} and
\ref{th12.3.8*}).

In section \ref{Sec4} we study the problem of existence at least
one compatible motion/solution of linear nonhomogeneous dynamical
system (Theorem \ref{thB2}).

Section \ref{Sec5} is dedicated to the application of our general
results obtained in Section \ref{Sec4} to the linear differential
(Subsection \ref{Ssec5.1}) and difference (Subsections
\ref{Ssec5.2} and \ref{Ssec5.3}) equations.

\section{Some motions and facts from the theory of dynamical
systems}\label{Sec2}

\subsection{Poisson stable motions}

Let $(X,\rho)$ be a complete metric space with metric $\rho$,
$\mathbb{R}$ $(\mathbb{Z})$ be a group of real (integer) numbers,
$\mathbb{R_{+}}$ $(\mathbb{Z_{+}})$ be a semigroup of the
nonnegative real (integer) numbers, $\mathbb S$ be one of the two
sets $\mathbb{R}$ or $\mathbb{Z}$, $\mathbb{T}\subseteq\mathbb{S}$
$(\mathbb{S_{+}}\subseteq\mathbb{T}$) be a sub-semigroup of
additive group $\mathbb{S}$ and $\mathbb T_{+}:=\{t\in \mathbb T|\
t\ge 0\}$.

Let $(X,\mathbb T,\pi)$ be a dynamical system. Let us recall the
classes of Poisson stable motions we study in this paper, see
\cite{Che_2020,Sell71,Sch72,scher85,sib} for details.

\begin{definition}\label{def-stat}\rm
A point $x \in X $ is called {\em stationary} (respectively, {\em
$\tau$-periodic}) if $\pi(t,x)=x$ (respectively,
$\pi(t+\tau,x)=\pi(t,x)$) for all $t\in\mathbb T$.
\end{definition}

\begin{definition}\label{def4.2}\rm
For given $\varepsilon>0$, a number $\tau \in \mathbb T$ is called
a {\em $\varepsilon$-shift of $x$} (respectively, {\em
$\varepsilon$-almost period of $x$}), if $\rho
(\pi(\tau,x),x)<\varepsilon$ (respectively, $\rho (\pi(\tau
+t,x),\pi(t,x))<\varepsilon$ for all $t\in \mathbb T$).
\end{definition}

\begin{definition}\label{def14.3}\rm
A point $x \in X $ is called {\em almost recurrent} (respectively,
{\em Bohr almost periodic}), if for any $\varepsilon >0$ there
exists a positive number $l$ such that any segment of length $l$
contains a $\varepsilon$-shift (respectively, $\varepsilon$-almost
period) of $x$.
\end{definition}

\begin{definition}\label{def4.4}\rm
If a point $x\in X$ is almost recurrent and its trajectory
$\Sigma_{x}:=\{\pi(t,x):\ t\in\mathbb T\}$ is precompact, then $x$
is called {\em (Birkhoff) recurrent}.
\end{definition}

\begin{definition}\rm
A point $x\in X$ is called {\em Levitan almost periodic}
\cite{Lev-Zhi} (see also \cite{Bro79,Che_2008,Che_2020,Lev_1953}),
if there exists a dynamical system $(Y,\mathbb T,\sigma)$ and a
Bohr almost periodic point $y\in Y$ such that $\mathfrak
N_{y}\subseteq \mathfrak N_{x}.$
\end{definition}


\begin{definition} \rm
A point $x\in X$ is called {\em almost automorphic} if
\begin{enumerate}
\item it is stable in the sense of Lagrange, i.e., its trajectory
$\Sigma_{x}:=\{\pi(t,x)|\ t\in \mathbb T\}$ is relatively compact
and \item $x$ is Levitan almost periodic.
\end{enumerate}
\end{definition}




\subsection{Comparability of motions by the character of
recurrence}

Following B. A. Shcherbakov \cite{Sch75,scher85} (see also
\cite{Che_1977}, \cite[ChI]{Che_2009}) we introduce the notion of
comparability of motions of dynamical system by the character of
their recurrence. While studying stable in the sense of Poisson
motions this notion plays the very important role (see, for
example, \cite{Sch72,scher85}).

Let $(X,\mathbb T,\pi)$ and $(Y,\mathbb T,\sigma)$ be dynamical
systems, $x\in X$ and $y\in Y$. Denote by $\mathfrak
M_{x}:=\{\{t_n\}:$ such that $\{\pi(t_n,x)\}$ converges as $n\to
\infty \}$, $\mathfrak N_{x}:=\{\{t_n\}:$ such that $\pi(t_n,x)\to
x$ as $n\to \infty \}$ and  $\mathfrak
N_{x}^{+\infty}:=\{\{t_n\}\in \mathfrak N_{x}:$ such that $t_n\to
+\infty$ as $n\to \infty \}$.

\begin{definition}
A point $x_0\in X$ is called comparable by the character of
recurrence with $y_0\in Y$ if there exists a continuous mapping
$h:\Sigma_{y_0}\mapsto \Sigma_{x_0}$ satisfying the condition
\begin{equation}\label{eqC1_1}
h(\sigma(t,y_0))=\pi(t,x_0)\ \ \mbox{for any}\ t\in \mathbb R
.\nonumber
\end{equation}
\end{definition}

\begin{definition}\label{defNDS} Let $(X,\mathbb T_1,\pi)$ and $(Y,\mathbb
T_2,\sigma)$ be two dynamical systems, $\mathbb T_1\subseteq
\mathbb T_2$ and $h:X\mapsto Y$ be a homomorphism of $(X,\mathbb
T_1,\pi)$ on $(Y,\mathbb T_2,\sigma)$. A triplet $\langle
(X,\mathbb T_1,\pi), (Y,\mathbb T_2,\sigma), h \rangle$ is said to
be a nonautonomous dynamical system \cite[ChI]{Che_2020}.
\end{definition}

\begin{theorem}\label{thPC2}\cite{Che_2019} Let $\langle (X,\mathbb T_1,\pi), (Y,\mathbb T_2,\sigma),h
\rangle$ be a nonautonomous dynamical system and $x_0\in X$ be a
conditionally Lagrange stable point (i.e., the set $\Sigma_{x_0}$
is conditionally precompact), then if $H(x_0)\bigcap X_{y_0}$
consists a single point $\{x_0\}$, where $y_0:=h(x_0)$, then
$\mathfrak N_{y_0}\subseteq \mathfrak N_{x_0}$.
\end{theorem}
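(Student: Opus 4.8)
The plan is to verify the inclusion directly on sequences. Fix an arbitrary $\{t_n\}\in\mathfrak N_{y_0}$, so that $\sigma(t_n,y_0)\to y_0$; I must show $\{t_n\}\in\mathfrak N_{x_0}$, i.e.\ that $\pi(t_n,x_0)\to x_0$. The first step is to exploit that $h$ is a homomorphism: since $h(\pi(t,x_0))=\sigma(t,y_0)$ for every $t$, evaluating along $\{t_n\}$ gives
$$
h(\pi(t_n,x_0))=\sigma(t_n,y_0)\to y_0 .
$$
Thus the $h$-images of the sequence $\{\pi(t_n,x_0)\}$ converge in $Y$, which is precisely the situation in which the conditional precompactness of $\Sigma_{x_0}$ can be invoked.

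Next I would identify all limit points of $\{\pi(t_n,x_0)\}$. By conditional Lagrange stability of $x_0$, any subsequence of $\{\pi(t_n,x_0)\}$ admits a further subsequence converging in $X$; the aim is to show that every such limit equals $x_0$. Let $\pi(t_{n_k},x_0)\to\bar x$ along some subsequence. On the one hand $\bar x\in\overline{\Sigma_{x_0}}=H(x_0)$, as a limit of points of the trajectory. On the other hand, continuity of $h$ together with the first step yields $h(\bar x)=\lim_k h(\pi(t_{n_k},x_0))=y_0$, so $\bar x\in X_{y_0}$. Hence $\bar x\in H(x_0)\cap X_{y_0}=\{x_0\}$, which forces $\bar x=x_0$.

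Finally I would conclude by the standard subsequence principle: a sequence lying in a conditionally precompact set (with convergent $h$-images) all of whose convergent subsequences tend to one and the same point $x_0$ must itself converge to $x_0$. Concretely, if $\pi(t_n,x_0)\not\to x_0$, there would be $\delta>0$ and a subsequence with $\rho(\pi(t_{n_k},x_0),x_0)\ge\delta$; conditional precompactness lets me extract a convergent sub-subsequence whose limit must be $x_0$ by the previous paragraph, contradicting the distance bound. Therefore $\pi(t_n,x_0)\to x_0$, i.e.\ $\{t_n\}\in\mathfrak N_{x_0}$, and this establishes $\mathfrak N_{y_0}\subseteq\mathfrak N_{x_0}$.

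I expect the main subtlety to be the correct use of conditional (as opposed to ordinary) precompactness: the sequence $\{\pi(t_n,x_0)\}$ need not be precompact in $X$ by itself, and one genuinely needs the convergence of its $h$-images to $y_0$—supplied for free by the homomorphism identity—before precompactness may be applied. The single-point condition $H(x_0)\cap X_{y_0}=\{x_0\}$ then carries out the essential work of pinning down every limit point; without it one could conclude only that limit points lie in the fiber $X_{y_0}$, not that they coincide with $x_0$.
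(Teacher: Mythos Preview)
Your argument is correct and is the natural one for this statement: use the homomorphism identity to push the sequence down to $Y$, invoke conditional precompactness against the relatively compact set $\{\sigma(t_n,y_0)\}\cup\{y_0\}$ to get subsequential limits in $X$, pin each such limit to $x_0$ via the hypothesis $H(x_0)\cap X_{y_0}=\{x_0\}$, and finish with the subsequence principle.

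Note, however, that the paper does not actually prove Theorem~\ref{thPC2}; it merely quotes the result from \cite{Che_2019}. So there is no in-paper proof to compare your approach against. Your write-up is essentially the standard proof one would expect to find in that reference, and your closing remark about the role of conditional (rather than ordinary) precompactness is exactly the right point to flag.
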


\begin{definition}\label{def9.2.1}
The point $y\in Y $ is called (see, for example, \cite{scher85}
and \cite{sib}) positively stable in the sense of Poisson if there
exists a sequence $t_{n} \to +\infty $ such that $\sigma^{t_{n}}y
\to y.$
\end{definition}

\begin{theorem}\label{thPC3}\cite{CC_2009,Sch75} Let $y\in Y$
be Poisson stable in the positive direction, then the following
statement are equivalent:
\begin{enumerate}
\item the point $x\in X$ is comparable with $y\in Y$ by the
character of recurrence; \item $\mathfrak N_{y}^{+\infty}\subseteq
\mathfrak N_{y}^{+\infty}$; \item for any $\varepsilon >0$ there
exists a $\delta =\delta(\varepsilon)>0$ such that
$d(\sigma(\tau,y),y)<\delta$ implies
$\rho(\pi(\tau,x),x)<\varepsilon$, where $d$ (respectively,
$\rho$) is the distance on the space $Y$ (respectively, on the
space $X$).
\end{enumerate}
\end{theorem}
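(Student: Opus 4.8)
The plan is to prove the three statements equivalent via the cycle $(iii)\Rightarrow(i)\Rightarrow(ii)\Rightarrow(iii)$, reading condition $(ii)$ as the inclusion $\mathfrak N_{y}^{+\infty}\subseteq\mathfrak N_{x}^{+\infty}$. Two of the three links are elementary and use only the group structure of the flows (so that each time-$t$ map is a homeomorphism) together with the joint continuity of $\pi$ and $\sigma$; the positive Poisson stability of $y$ is called upon only in the last link $(ii)\Rightarrow(iii)$, and that is where the genuine difficulty lies.

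First I would treat the two easy implications. For $(i)\Rightarrow(ii)$: if $h:\Sigma_{y}\to\Sigma_{x}$ is continuous with $h(\sigma(t,y))=\pi(t,x)$, then $h(y)=\pi(0,x)=x$, and for $\{t_n\}\in\mathfrak N_{y}^{+\infty}$ one has $\sigma(t_n,y)\to y$, so $\pi(t_n,x)=h(\sigma(t_n,y))\to h(y)=x$, i.e. $\{t_n\}\in\mathfrak N_{x}^{+\infty}$. For $(iii)\Rightarrow(i)$: define $h(\sigma(t,y)):=\pi(t,x)$ on $\Sigma_{y}$. It is well defined, since $\sigma(t_1,y)=\sigma(t_2,y)$ gives, after applying the fixed homeomorphism $\sigma(-t_2,\cdot)$, the identity $\sigma(\tau,y)=y$ with $\tau:=t_1-t_2$; as $d(\sigma(\tau,y),y)=0<\delta$ for all $\delta$, condition $(iii)$ forces $\rho(\pi(\tau,x),x)<\varepsilon$ for all $\varepsilon$, whence $\pi(\tau,x)=x$ and $\pi(t_1,x)=\pi(t_2,x)$. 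It is continuous, since $\sigma(t_n,y)\to\sigma(t_0,y)$ gives, via the fixed homeomorphism $\sigma(-t_0,\cdot)$, the convergence $\sigma(t_n-t_0,y)\to y$, hence by $(iii)$ $\pi(t_n-t_0,x)\to x$, and applying the fixed continuous map $\pi(t_0,\cdot)$ yields $\pi(t_n,x)\to\pi(t_0,x)$. Neither step uses Poisson stability.

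The remaining implication $(ii)\Rightarrow(iii)$ I would prove by contradiction. If $(iii)$ fails, there are $\varepsilon_0>0$ and times $\tau_n\in\mathbb T$ with $d(\sigma(\tau_n,y),y)\to0$ but $\rho(\pi(\tau_n,x),x)\ge\varepsilon_0$. If a subsequence satisfies $\tau_n\to+\infty$, it lies in $\mathfrak N_{y}^{+\infty}$ but not in $\mathfrak N_{x}^{+\infty}$, contradicting $(ii)$. If $\{\tau_n\}$ is bounded, a convergent subsequence $\tau_n\to\tau^{*}$ gives $\sigma(\tau^{*},y)=y$ and $\rho(\pi(\tau^{*},x),x)\ge\varepsilon_0$, so $\tau^{*}\neq0$; replacing $\tau^{*}$ by $-\tau^{*}$ if necessary we may take $\tau^{*}>0$, so that $\{n\tau^{*}\}\in\mathfrak N_{y}^{+\infty}$ and $(ii)$ yields $\pi(n\tau^{*},x)\to x$. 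Then $\pi((n+1)\tau^{*},x)=\pi(\tau^{*},\pi(n\tau^{*},x))\to\pi(\tau^{*},x)$, while $(ii)$ also gives $\pi((n+1)\tau^{*},x)\to x$, forcing $\pi(\tau^{*},x)=x$, a contradiction.

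The hard case, and the main obstacle, is when $\tau_n\to-\infty$ with no convergent and no $+\infty$-subsequence: a backward near-period of $y$ must then be converted into a genuine forward witness. This is exactly the role of the positive Poisson stability of $y$: choosing $a_k\to+\infty$ with $\sigma(a_k,y)\to y$, one seeks, by a diagonal selection, times $T_m:=a_{k(m)}+\tau_{n(m)}\to+\infty$ for which $\sigma(T_m,y)\to y$ — which is available because $\sigma(a_k+\tau_n,y)=\sigma(\tau_n,\sigma(a_k,y))\to\sigma(\tau_n,y)$ and the latter tends to $y$ — while $\rho(\pi(T_m,x),x)$ remains bounded below, so that $\{T_m\}\in\mathfrak N_{y}^{+\infty}\setminus\mathfrak N_{x}^{+\infty}$ again contradicts $(ii)$. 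Guaranteeing that the $X$-component $\pi(T_m,x)$ stays away from $x$ under this relocation, where no precompactness of $\Sigma_{x}$ is assumed, is the delicate technical heart of the argument and is precisely the point at which the hypothesis that $y$ be positively Poisson stable is indispensable.
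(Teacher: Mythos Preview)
The paper does not supply a proof of this theorem: it is quoted with the citations \cite{CC_2009,Sch75} and no argument follows the statement. So there is nothing in the paper to compare your proposal against, and I can only comment on the soundness of your outline.

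Your implications $(i)\Rightarrow(ii)$ and $(iii)\Rightarrow(i)$ are clean and correct (modulo the tacit assumption that $\mathbb T$ is a group so that $\sigma^{-t}$ and $\pi^{-t}$ make sense, which is the setting intended here). The genuine gap is in $(ii)\Rightarrow(iii)$: in the case $\tau_n\to-\infty$ you correctly locate the difficulty---keeping $\pi(T_m,x)$ away from $x$ after translating by a large positive $a_k$---but you do not actually close it. Saying it is ``the delicate technical heart'' is an acknowledgement, not an argument.

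The missing observation is short: apply hypothesis $(ii)$ to the Poisson sequence $\{a_k\}$ itself. Since $\{a_k\}\in\mathfrak N_{y}^{+\infty}$, $(ii)$ gives $\pi(a_k,x)\to x$. Now fix $n$; by continuity of the map $\pi(\tau_n,\cdot)$ at $x$ one has $\pi(a_k+\tau_n,x)=\pi(\tau_n,\pi(a_k,x))\to\pi(\tau_n,x)$ as $k\to\infty$. Hence for each $n$ one can pick $k(n)$ so large that $T_n:=a_{k(n)}+\tau_n>n$, $d(\sigma(T_n,y),\sigma(\tau_n,y))<1/n$, and $\rho(\pi(T_n,x),\pi(\tau_n,x))<\varepsilon_0/2$. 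Then $T_n\to+\infty$, $\sigma(T_n,y)\to y$, while $\rho(\pi(T_n,x),x)\ge\varepsilon_0-\varepsilon_0/2=\varepsilon_0/2$, so $\{T_n\}\in\mathfrak N_y^{+\infty}\setminus\mathfrak N_x^{+\infty}$, contradicting $(ii)$. Note that this single diagonal argument actually covers all three of your sub-cases at once (there is no need to treat bounded $\tau_n$ or $\tau_n\to+\infty$ separately), and it makes explicit why positive Poisson stability of $y$ is used: it supplies the sequence $\{a_k\}$ on which $(ii)$ can be invoked to control the $X$-side.
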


\begin{theorem}\label{thPC4}\cite{Sch75} Suppose that the point
$x$ is comparable with $y\in Y$ by the character of recurrence. If
the point $y$ is stationary (respectively, $\tau$-periodic,
Levitan almost periodic, almost recurrent in the sense of Bebutov,
Poisson stable), the point $x$ is so.
\end{theorem}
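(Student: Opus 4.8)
The plan is to extract from comparability two consequences and then verify each recurrence property case by case. Writing $y=y_0$, $x=x_0$, let $h:\Sigma_{y}\to\Sigma_{x}$ be the continuous map with $h(\sigma(t,y))=\pi(t,x)$ for all $t\in\mathbb T$. Putting $t=0$ gives $h(y)=\pi(0,x)=x$. From this two facts follow. First, $\mathfrak N_y\subseteq\mathfrak N_x$: if $\sigma(t_n,y)\to y$, then since all the iterates $\sigma(t_n,y)$ and the point $y$ lie in $\Sigma_{y}$ and $h$ is continuous there, $\pi(t_n,x)=h(\sigma(t_n,y))\to h(y)=x$. Second, a uniform estimate at $y$: continuity of $h$ at the point $y\in\Sigma_{y}$ yields, for each $\varepsilon>0$, a number $\delta=\delta(\varepsilon)>0$ such that $d(\sigma(\tau,y),y)<\delta$ forces $\rho(\pi(\tau,x),x)=\rho(h(\sigma(\tau,y)),h(y))<\varepsilon$. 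Neither fact uses Poisson stability of $y$, so they hold unconditionally from the definition of comparability.

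The stationary, periodic, and Poisson-stable cases are immediate from the functional equation together with the first fact. If $y$ is stationary, then $\sigma(t,y)=y$ for all $t$, so $\pi(t,x)=h(\sigma(t,y))=h(y)=x$ and $x$ is stationary. If $\sigma(t+\tau,y)=\sigma(t,y)$ for all $t$, then likewise $\pi(t+\tau,x)=h(\sigma(t+\tau,y))=h(\sigma(t,y))=\pi(t,x)$, so $x$ is $\tau$-periodic. If $y$ is positively Poisson stable, choose $t_n\to+\infty$ with $\sigma(t_n,y)\to y$; then $\{t_n\}\in\mathfrak N_y\subseteq\mathfrak N_x$ gives $\pi(t_n,x)\to x$ with $t_n\to+\infty$, so $x$ is positively Poisson stable as well.

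For the almost recurrent (Bebutov) case I combine the uniform estimate with the relative density of shifts of $y$. Fix $\varepsilon>0$ and take the corresponding $\delta=\delta(\varepsilon)$ from the second fact. Almost recurrence of $y$ supplies a length $l=l(\delta)>0$ such that every segment of length $l$ contains a $\delta$-shift $\tau$ of $y$, i.e. $d(\sigma(\tau,y),y)<\delta$; by the choice of $\delta$ this $\tau$ satisfies $\rho(\pi(\tau,x),x)<\varepsilon$ and is therefore an $\varepsilon$-shift of $x$. Hence every segment of length $l$ contains an $\varepsilon$-shift of $x$, so $x$ is almost recurrent. For the Levitan almost periodic case I chain $\mathfrak N$-inclusions: by hypothesis there is a dynamical system carrying a Bohr almost periodic point $z$ with $\mathfrak N_z\subseteq\mathfrak N_y$, and combining this with $\mathfrak N_y\subseteq\mathfrak N_x$ gives $\mathfrak N_z\subseteq\mathfrak N_x$, so the same $z$ witnesses that $x$ is Levitan almost periodic.

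The arguments are short once the two consequences of comparability are isolated. The only point requiring care is the almost recurrent case: there the sequential inclusion $\mathfrak N_y\subseteq\mathfrak N_x$ alone is insufficient, and one genuinely needs the uniform continuity of $h$ at $y$ to convert a relatively dense set of $\delta$-shifts of $y$ into a relatively dense set of $\varepsilon$-shifts of $x$. I expect this to be the main (though modest) obstacle; the remaining cases are formal consequences of the functional equation $h\circ\sigma^{t}=\pi^{t}\circ h$.
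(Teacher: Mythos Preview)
The paper does not supply its own proof of this theorem; it is quoted from Shcherbakov with a citation and used as a known result. Your argument is correct: from the defining relation $h(\sigma(t,y))=\pi(t,x)$ and continuity of $h$ at $y\in\Sigma_y$ you extract exactly the two tools needed---the inclusion $\mathfrak N_y\subseteq\mathfrak N_x$ and the $\varepsilon$--$\delta$ transfer of shifts---and each of the five cases then follows as you describe.
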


\subsection{Some general facts about nonautonomous dynamical
systems}

\begin{definition}\label{def9.2.2}
(Conditional compactness). Let $(X,h,Y)$ be a fibre space, i.e.,
$X$ and $Ya$ be two metric spaces and $ h: X \to Y$ be a
homomorphism from $X$ into $Y$. The subset $ M \subseteq X$ is
said to be conditionally relatively compact\index{conditionally
relatively compact}, if the pre-image $h^{-1}(Y ')\bigcap M $ of
every relatively compact subset $ Y'\subseteq Y $ is a relatively
compact subset of $X$, in particularly $M_{y}:=h^{-1}(y)\bigcap M$
is relatively compact for every $y $. The set $ M $ is called
conditionally compact if it is closed and conditionally relatively
compact.
\end{definition}

\begin{example}\label{ex9.2.3}
Let $K$ be a compact space, $X:=K\times Y $, $h= pr _{2} : X \to
\Omega,$ then the triplet $(X,h,Y)$ be a fibre space, the space
$X$ is conditionally compact, but not compact.
\end{example}

Let $ \langle (X,\mathbb T _{+},\pi),(Y,\mathbb T,\sigma),h\rangle
$ be a nonautonomous dynamical system and $y\in Y$ be a positively
Poisson stable point. Denote by $$ \mathcal{E}_{y}^{+}:=\{\xi
|\quad \exists \{t_{n}\}\in \mathfrak N _{y}^{+\infty} \quad
\mbox{such that}\quad \pi ^{t_{n}}|_{X_{y}}\to \xi\},$$ where
$X_{y}:=\{x\in X|\quad h(x)=y\}$ and $\to$ means the pointwise
convergence.

\begin{lemma}\label{l9.2.4}\cite{Che_2020}
Let $y\in Y$ be a positively Poisson stable point, $\langle
(X,\mathbb T_{+},\pi ),$ $(Y,\mathbb T,\sigma),h\rangle $ be a
nonautonomous dynamical system and $X$ be a conditionally compact
space, then $\mathcal{E}^{+}_{y} $ is a nonempty compact
subsemigroup of the semigroup $X_{y}^{X_{y}}$ (w.r.t. composition
of mappings).
\end{lemma}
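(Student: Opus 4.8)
The plan is to verify the three assertions — nonemptiness, compactness, and closure under composition — in that order, working inside the product space $X_y^{X_y}$ endowed with the topology of pointwise convergence.

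First I would establish nonemptiness. Since $y$ is positively Poisson stable, pick a sequence $\{t_n\}\in\mathfrak N_y^{+\infty}$, so $\sigma(t_n,y)\to y$ with $t_n\to+\infty$. For each $n$ the map $\pi^{t_n}$ carries $X_y$ into $X_{\sigma(t_n,y)}$; because $X$ is conditionally compact and the relevant fibres sit over the relatively compact set $\overline{\{\sigma(t_n,y)\}}\cup\{y\}$, the union $\bigcup_n X_{\sigma(t_n,y)}\cup X_y$ is relatively compact in $X$. Hence the family $\{\pi^{t_n}|_{X_y}\}$ takes values in a fixed compact subset of $X$, so by Tychonoff's theorem it has a subnet converging pointwise to some $\xi\colon X_y\to X$. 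One checks $\xi(X_y)\subseteq X_y$: for $x\in X_y$, $h(\pi^{t_n}x)=\sigma(t_n,y)\to y$, and continuity of $h$ forces $h(\xi(x))=y$. Thus $\xi\in\mathcal E_y^+$, which is therefore nonempty. (Passing from subnets to sequences, if desired, uses metrizability of the compact fibres.)

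Next, compactness. By the conditional compactness argument above, \emph{every} element of $\mathcal E_y^+$ maps $X_y$ into one fixed compact set $K\subseteq X_y$, so $\mathcal E_y^+\subseteq K^{X_y}$, which is compact by Tychonoff. It remains to show $\mathcal E_y^+$ is closed in $K^{X_y}$. Here one takes a net $\xi_\alpha\to\xi$ pointwise with each $\xi_\alpha$ the pointwise limit of maps $\pi^{t_n^\alpha}|_{X_y}$ along $\{t_n^\alpha\}\in\mathfrak N_y^{+\infty}$, and runs a diagonal argument: for each $\alpha$ choose $n(\alpha)$ large enough that $\pi^{t_{n(\alpha)}^\alpha}|_{X_y}$ is uniformly close to $\xi_\alpha$ on a suitable finite subset, $t_{n(\alpha)}^\alpha\to+\infty$, and $\sigma(t_{n(\alpha)}^\alpha,y)$ is close to $y$; the resulting sequence witnesses $\xi\in\mathcal E_y^+$. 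This diagonalization — making the single extracted sequence simultaneously control the $t$-values, the Poisson-recurrence of $y$, and pointwise proximity to the limit map — is the step I expect to be the main obstacle, since pointwise convergence on a possibly non-separable fibre requires care in how the finite test sets are chosen.

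Finally, closure under composition. Let $\xi,\eta\in\mathcal E_y^+$ with $\xi=\lim\pi^{t_n}|_{X_y}$ and $\eta=\lim\pi^{s_m}|_{X_y}$. I want to exhibit a sequence $\{r_k\}\in\mathfrak N_y^{+\infty}$ with $\pi^{r_k}|_{X_y}\to\xi\circ\eta$. For fixed $m$, since $\pi^{s_m}$ maps the compact fibre $X_y$ into a compact fibre and $\pi^{t_n}\to\xi$ pointwise there (in fact uniformly, because these are maps into a compact metric space and one can upgrade pointwise to uniform convergence on compacta when the limit is continuous — or simply argue pointwise), we get $\pi^{t_n+s_m}|_{X_y}=\pi^{t_n}|_{X_{\sigma(s_m,y)}}\circ\pi^{s_m}|_{X_y}\to\xi\circ(\pi^{s_m}|_{X_y})$ as $n\to\infty$, modulo identifying the limit of $\pi^{t_n}$ restricted to the moving fibre $X_{\sigma(s_m,y)}$ with $\xi$ (valid since $\sigma(s_m,y)\to y$ and everything lives in the same compact set). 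Then letting $m\to\infty$ gives $\xi\circ\eta$, and a further diagonal extraction produces a single sequence $r_k=t_{n(k)}+s_{m(k)}\to+\infty$ with $\sigma(r_k,y)\to y$ and $\pi^{r_k}|_{X_y}\to\xi\circ\eta$. Hence $\xi\circ\eta\in\mathcal E_y^+$, completing the proof that $\mathcal E_y^+$ is a compact subsemigroup of $X_y^{X_y}$.
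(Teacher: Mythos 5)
Your treatment of nonemptiness and of compactness (precompactness inside $X_y^{X_y}$ by Tychonoff plus closedness via a diagonal extraction over neighbourhoods) matches the paper's proof in both structure and substance; the worry about pointwise convergence on a non-separable fibre is moot, since $X_y$ is compact metric by conditional compactness. The genuine problem is in the closure-under-composition step, where you take the iterated limits in the wrong order. For fixed $m$ you let $n\to\infty$ in $\pi^{t_n}\circ\pi^{s_m}|_{X_y}$ and claim the limit is $\xi\circ\bigl(\pi^{s_m}|_{X_y}\bigr)$. But $\pi^{s_m}|_{X_y}$ takes values in the fibre $X_{\sigma(s_m,y)}$, where $\xi$ is not defined and where nothing is known about the behaviour of $\pi^{t_n}$: the hypothesis $\pi^{t_n}|_{X_y}\to\xi$ gives convergence only at points of $X_y$. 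Your parenthetical justification (``valid since $\sigma(s_m,y)\to y$ and everything lives in the same compact set'') conflates the limit over $m$ with the situation at a fixed $m$; for fixed $m$ the point $\sigma(s_m,y)$ is just some point different from $y$, and even if you pass to a further subnet so that $\pi^{t_n}$ converges on the whole compact set $Q$, the resulting limit map need not be continuous, so you cannot subsequently pass to the limit in $m$ either.

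The repair is precisely the order of limits used in the paper. First fix $n$ and let $m\to\infty$: for each $x\in X_y$ one has $\pi^{t_n}(\pi^{s_m}(x))\to\pi^{t_n}(\eta(x))$ by continuity of the single map $\pi^{t_n}$; here $\eta(x)\in X_y$. Then let $n\to\infty$: $\pi^{t_n}(\eta(x))\to\xi(\eta(x))$, because $\eta(x)$ lies in $X_y$, which is exactly where $\pi^{t_n}\to\xi$ is assumed. A diagonal extraction in $Q^{X_y}$ (choosing, for each neighbourhood of $\xi\circ\eta$, first $n$ and then $m=m_n$ large) produces a single sequence $r_k=t_{n_k}+s_{m_k}$ with $r_k\to+\infty$, $\sigma(r_k,y)\to y$ and $\pi^{r_k}|_{X_y}\to\xi\circ\eta$, so $\xi\circ\eta\in\mathcal E_y^+$. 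With this inversion of the two limits your argument coincides with the paper's; as written, the composition step does not go through.
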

\begin{proof} Let $\{t_{n}\}\in \mathfrak N ^{+\infty}_{y}$, then $\sigma _{t_{n}}y \to y$
and, consequently, the set
$$
Q:= \overline {\bigcup \{\pi ^{t_{n}}(X_{y})| n \in \mathbb N \}}
$$
is compact, because $X$ is conditionally compact. Thus $\{\pi
^{t_{n}}|_{X_{y}}\} \subseteq Q^{X_{y}}$ and according to
Tykhonov's theorem this sequence is precompact. Let $\xi$ be a
limiting point of $\{\pi ^{t_{n}}|_{X_{y}}\}$, then $\xi \in
\mathcal{E}_{y}^{+}$ and, consequently, $\mathcal{E}^{+}_{y}\not=
\emptyset .$

We note that $\mathcal{E}^{+}_{y} \subseteq X_{y}^{X_{y}}$ and,
consequently, $\mathcal{E}^{+}_{y}$ is precompact. Let now $\xi
_{1},\xi _{2} \in \mathcal{E}^{+}_{y}$, we will prove that $\xi
_{1}\cdot\xi _{2} \in \mathcal{E}^{+}_{y}$. Since $\xi _{1},\xi
_{2} \in \mathcal{E}^{+}_{y}$, then there are two sequences
$\{t_{n}^{i}\} \in \mathfrak N ^{+\infty}_{y}\ (i=1,2)$ such that
$$ \pi ^{t_{n}^{i}}|_{X_{y}} \to \xi _{i}\quad (i=1,2).$$
Denote by $\xi :=\xi _{1}\cdot \xi _{2} \in
X_{\omega}^{X_{y}}\subseteq
 \tilde{Q}^{X_{y}}$, where
$$
\tilde{Q}:= \overline {\bigcup \{\pi ^{t^{i}_{n}}(X_{y})|\ i=1,2;\
n \in \mathbb N \}} .
$$
Then we have
\begin{equation}\label{eqCC1}
\pi ^{t_{n}^{1}} \cdot \xi _{2} \to \xi _{1}\cdot \xi _{2}=\xi
\end{equation}
as $n \to + \infty.$ Let $U_{\xi} \subset \tilde{Q}^{X_{y}}$ be an
arbitrary open neighborhood of point $\xi$ in $
\tilde{Q}^{X_{y}}$, then from relation (\ref{eqCC1}) results that
there exists a number $n_{1}(\xi)\in \mathbb N $ such that $ \pi
^{t_{n}^{1}} \cdot \xi _{2} \in U_{\xi} $ for any $n\ge
n_{1}(\xi).$ Now we fix $n\ge n_{1}(\xi),$ then there exist an
open neighborhood $ U_{\pi ^{t_{n}^{1}}\cdot \xi _{2}} \subset
U_{\xi}$ of point $\pi ^{t_{n}^{1}}\cdot \xi _{2} \in
\tilde{Q}^{X_{y}}$ and a number $m_{n}\in \mathbb N $ such that
$$
\pi ^{t^{1}_{n}} \cdot \pi ^{t_{m}^{2}} |_{X_{y} } \in U_{\pi
^{t_{n}^{1}} \cdot \xi _{2}}
$$
for any $n\ge n_{1}(\xi)$ and $m\ge m_{n}(\xi) $ and,
consequently,
$$
\pi ^{t^{1}_{n}}\cdot \pi ^{t_{m}^{2}}|_{X_{y}}\in U_\xi
$$
for any $n \ge n_{1}(\xi)$ and $m \ge m_{n}(\xi).$ Thus from
sequence $\{\pi ^{t_n^1+t_m^2}|_{X_{y}}\}$ it is possible to
extract a subsequence $\{\pi ^{t_{n_k}^1+t_{m_k}^2}|_{X_{y}}\}
\quad (t_{n_k}^1+t_{m_k}^2 \to + \infty )$ such that $ \pi
^{t_{n_k}^1+t_{m_k}^2}|_{X_{y}}\to \xi $ and, consequently, $ \xi
=\xi _{1}\cdot \xi _{2}\in \mathcal{E}^{+}_{y}.$

To finish the proof of Lemma it is sufficient to show that
$\mathcal{E}_{y}^{+}$ is closed in $X_{y}^{X_{y}}$. Let
$\{\varepsilon_k\}\to 0$ be a monotone decreasing positive
sequence, $\{\xi_k\}\subset \mathcal{E}_{y}^{+}$ such that
$\xi_k\to \xi$ in $X_{y}^{X_{y}}$ and $\{t_n^{(k)}\}\in \mathfrak
N_{y}^{+\infty}$ such that $\{\pi^{t_{n}^{(k)}}\}\to \xi_{k}$ as
$n\to \infty$ for every $k\in \mathbb N$. Then for each
$k\in\mathbb N$ there exists $n_k\in\mathbb N$ such that
\begin{equation}\label{eqN1}
\rho(\sigma(t_{n}^{(k)}, y),y)<\varepsilon_{k}\ \mbox{for all}\
n\ge n_{k}\ \ \mbox{and}\ \ t^{(k)}_{n_k}>k.
\end{equation}
We will establish that the set
$$
Q_{1}:=\overline{\bigcup \{\pi^{t_n^{(k)}}(X_{y}):\ n\ge n_k,\ \
k\in \mathbb N\}}
$$
is compact. If
$$
\{x^{l}\}\subseteq \bigcup \{\pi^{t_n^{(k)}}(X_{y}):\ n\ge n_k,\ \
k\in \mathbb N\},
$$
then for every $l\in\mathbb N$ there exist $m_{l}>n_{l}$ and
$\bar{x}_{l}\in X_{\omega}$ such that
$x_{l}=\pi(t^{(l)}_{m_l},\bar{x}_l)$. From (\ref{eqN1}) it follows
that $\{t_{m_l}^{(l)}\}\in\mathfrak N_{y}^{+\infty}$. Since the
set $X$ is conditionally compact, then from the sequence $\{x_l\}$
we can extract a convergent subsequence.

If $\xi \in \mathcal{E}_{y}^{+}\subseteq X_{y}^{X_{y}}\subseteq
Q_2^{X_{y}}$, where $Q_2:=X_{y}\bigcup Q_1$, and $U(\xi)$ is an
arbitrary neighborhood of $\xi$ in $Q_2^{X_{y}}$, then there
exists a number $l_0=l(\xi)\in\mathbb N$ such that $\xi_l \in
U(\xi)$ for all $l\ge l_0$. Denote by $V_l$ a neighborhood of
$\xi_l$ such that $V_l\subset U(\xi)$. Since $\xi_l\in
\mathcal{E}_{y}^{+}$, then there exists a number $m_l>n_l$ such
that $\pi^{t_l}\in V_l$, where $t_l:=t^{(l)}_{m_l}$.

From (\ref{eqN1}) we obtain $\{t_l\}\in\mathfrak N_{y}^{+\infty}$
and $\pi^{t_l}\big{|}_{X_{y}}\to \xi$ as $l\to \infty$,  in
$Q_2^{X_{y}}$. Lemma is proved.
\end{proof}

\section{Fixed point theorem for noncommutative affine semigroup of
mappings}\label{Sec3}

Let $(X,\rho)$ be a metric space. Denote by $B[a,r]:=\{x\in X|\
\rho(x,a)\le r\}$, where $a\in X$ and $r\ge 0$. For any $
x_{1},x_{2} \in X$ and $\alpha \in [0,1]$ denote by
$S(\alpha,x_1,x_2)$ the intersection of $B[x_{1},\alpha r]$ and
$B[x_{2},(1-\alpha)r]$, where $r=\rho (x_{1},x_{2}))$.

\begin{definition}\label{def12.22}
A metric space $ (X,\rho)$ is called:
\begin{enumerate}
\item a metric space with a convex structure \cite{Tak_1970} if
there exists a mapping $W:[0,1]\times X\times X\to X$ satisfying
\begin{equation}\label{eqCS1}
\rho(u,W(\alpha,x_1,x_2))\le \alpha \rho(u,x_1)\nonumber
+(1-\alpha)\rho(u,x_2) ;
\end{equation}
\item strictly convex \cite{Tak_1970} if for any $x,y\in X$ and
$\alpha \in [0,1]$ there exists a unique element $x $
($x=S(\alpha,x_1,x_2)$) such that $\rho(x,x_1)=\alpha
\rho(x_1,x_2)$ and $\rho(x,x_2)=(1-\alpha)\rho(x_1,x_2)$; \item
strongly convex \cite{Bul_1999,Hit_1982} (or strictly convex space
with convex round balls), if $(X,\rho)$ is a strictly convex
metric space and for any $x_1,x_2,x_3\in X$ ($x_2\not= x_3$) and
$\alpha \in (0,1)$ there holds the inequality $ \rho (x_{1
},S(\alpha,x_{2},x_{3}))<\max \{\rho (x_{1},x_{2}), \rho
(x_{1},x_{3})\}$.
\end{enumerate}
\end{definition}

\begin{definition}\label{def12.23}
Let $X$ be a metric space with a convex structure (respectively,
strictly convex or strongly convex). A subset $M$ of $X$ is said
to be convex (respectively, strictly convex or strongly convex),
if $S(\alpha,x_{1},x_{2})\in M$ for any $ \alpha \in (0,1)$ and
$x_{1},x_{2} \in M $.
\end{definition}

\begin{remark}\label{RSC1} 1. Closed balls may be non convex sets
and intersection of convex sets may be non convex set
\cite{Gal_1992}.

2. Intersection of convex sets is convex set in strictly convex
metric space \cite{Gal_1992}.

3. There exist strictly convex metric spaces in which closed balls
are not convex \cite{BV_IJPAM_2005}.

4. The closed ball $B[c,r]$ for every $r>0$ and every $c\in X$ is
a convex set in the strongly convex metric space $(X,\rho)$
\cite{Bul_1999}
\end{remark}

\begin{definition}
A Banach space $X$ is said to be:
\begin{enumerate}
\item uniformly convex, if the inequality $|p_1-p_2|\ge \delta
\max\{|p_1|,|p_2|\}$ implies $|\frac{1}{2}(p_1+p_2)|\le
(1-\varphi(\delta))\max\{|p_1|,|p_2|\}$ ($\varphi(\delta)>0$ for
any $0<\delta \le 2$); \item strictly convex, if for any $x,y\in
X$ with $|x|=|y|=1$ and $x\not=y,$ $|\lambda x+(1-\lambda)y|<1$
for any $\lambda \in (0,1).$
\end{enumerate}
\end{definition}

\begin{remark}\label{remCS1}
1. Uniformly convex Banach spaces are strictly convex, but the
converse is not true.

2. If $(X,|\cdot|)$ is a strictly convex Banach space, then the
metric space $(X,\rho)$ ($\rho(x_1,x_2):=|x_1 -x_2|$) is strictly
convex (see, for example, \cite{Bul_1999,Gal_1992}).

3. If $M$ is a convex subset of strictly convex Banach space
$(X,|\cdot|)$, then the metric space $(M,\rho)$
($\rho(x_1,x_2):=|x_1 -x_2|$) is  strictly convex.

4. Every convex closed subset $X$ of the Hilbert space $H$
equipped with metric $\rho (x_{1},x_{2}) = \vert x_{1}-x_{2}\vert$
is a strongly convex metric space.
\end{remark}

\begin{lemma}\label{lUC1} If $(X,|\cdot|)$ is a uniformly convex Banach space, then the
metric space $(X,\rho)$ ($\rho(x_1,x_2):=|x_1 -x_2|$) is strongly
convex.
\end{lemma}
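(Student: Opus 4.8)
The goal is to show that if $(X,|\cdot|)$ is a uniformly convex Banach space, then $(X,\rho)$ with $\rho(x_1,x_2)=|x_1-x_2|$ is strongly convex in the sense of Definition \ref{def12.22}(iii). There are two things to verify: first that $(X,\rho)$ is strictly convex as a metric space, and second that balls have the "convex round" property, i.e., $\rho(x_1,S(\alpha,x_2,x_3))<\max\{\rho(x_1,x_2),\rho(x_1,x_3)\}$ whenever $x_2\neq x_3$ and $\alpha\in(0,1)$.

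For the first part I would invoke Remark \ref{remCS1}(1)-(2): a uniformly convex Banach space is strictly convex as a Banach space, hence the induced metric space is strictly convex; in particular for each $\alpha\in[0,1]$ and $x_2,x_3\in X$ the set $S(\alpha,x_2,x_3)$ consists of the single point $p_\alpha:=(1-\alpha)x_2+\alpha x_3$ (the point on the segment at the prescribed distances — one checks $|p_\alpha-x_2|=\alpha\,|x_2-x_3|$ and $|p_\alpha-x_3|=(1-\alpha)\,|x_2-x_3|$, and uniqueness comes from strict convexity). So the inequality to prove becomes
\[
|x_1-(1-\alpha)x_2-\alpha x_3|<\max\{|x_1-x_2|,\,|x_1-x_3|\}
\]
for $x_2\neq x_3$, $\alpha\in(0,1)$.

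To get this I would reduce to the quantitative uniform convexity hypothesis. Set $p_1:=x_1-x_2$ and $p_2:=x_1-x_3$, so $p_1-p_2=x_3-x_2\neq 0$, and write $x_1-p_\alpha=(1-\alpha)p_1+\alpha p_2$. First handle the symmetric case $\alpha=\tfrac12$: let $M:=\max\{|p_1|,|p_2|\}$; if $M=0$ there is nothing to prove (then $p_1=p_2$, contradicting $p_1\neq p_2$), so $M>0$ and $\delta:=|p_1-p_2|/M>0$, whence uniform convexity gives $|\tfrac12(p_1+p_2)|\le(1-\varphi(\delta))M<M$, which is exactly the claim for the midpoint. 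For general $\alpha\in(0,1)$ the cleanest route is an approximation/iteration argument: the dyadic midpoints of the segment satisfy the strict inequality by the previous step applied to sub-segments, and then by continuity of $\alpha\mapsto|x_1-p_\alpha|$ and a standard convexity-of-the-norm estimate one extends to all $\alpha$. Concretely, since $t\mapsto|x_1-p_t|$ is a convex function on $[0,1]$ that is strictly less than $\max$ of its endpoint values at $t=\tfrac12$, convexity forces it to be strictly below the line joining $(0,|p_1|)$ and $(1,|p_2|)$ on the whole open interval, and that line is itself $\le\max\{|p_1|,|p_2|\}$; hence $|x_1-p_\alpha|<\max\{|x_1-x_2|,|x_1-x_3|\}$ for all $\alpha\in(0,1)$.

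The main obstacle is the passage from the midpoint estimate (where uniform convexity applies directly) to arbitrary ratios $\alpha$: one must be careful that the convexity-of-the-norm argument genuinely yields a \emph{strict} inequality on the whole open interval and not merely a non-strict one, which is why the midpoint strict bound (coming from $\varphi(\delta)>0$) is essential as the seed. Everything else — identifying $S(\alpha,x_2,x_3)$ with the affine combination, checking the distance identities, and the strict-convexity claim — is routine and can be cited from Remark \ref{remCS1} and Definition \ref{def12.22}.
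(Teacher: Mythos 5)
Your proposal is correct and its core is the same as the paper's: strict convexity is imported from Remark \ref{remCS1}, and the ``round balls'' inequality is extracted from the quantitative uniform convexity hypothesis applied to the differences $p_1,p_2$ with $\delta=|p_1-p_2|/\max\{|p_1|,|p_2|\}>0$. Where you genuinely diverge is in treating general $\alpha\in(0,1)$: the paper's printed proof writes $\rho(x_1,S(\alpha,x_2,x_3))=|\tfrac12(p_1+p_2)|$ for an arbitrary $\alpha$, which is only valid at $\alpha=\tfrac12$ (and it never specifies $\delta$), so it really only establishes the midpoint case. Your reduction --- midpoint estimate from uniform convexity, then propagation to all $\alpha$ via convexity of $t\mapsto|x_1-p_t|$ --- closes exactly the step the paper elides, and this is the more careful argument. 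One small caution in your extension step: from $g(\tfrac12)<\max\{g(0),g(1)\}$ you cannot immediately conclude that $g$ lies \emph{strictly} below its chord on all of $(0,1)$, since a convex $g$ may coincide with the chord (e.g.\ when $g$ is affine and nonconstant, which does occur for $g(t)=|x_1-p_t|$). The conclusion still holds, but you should split the cases: if $g(0)\neq g(1)$ then already $g(\alpha)\le(1-\alpha)g(0)+\alpha g(1)<\max\{g(0),g(1)\}$ for $\alpha\in(0,1)$ without using the midpoint bound; if $g(0)=g(1)=:M$ then the chord is constant, $g(\tfrac12)<M$ gives strictness below the chord at the midpoint, and convexity propagates $g(\alpha)<M$ to every interior $\alpha$ (write $\alpha$ as a convex combination of $\tfrac12$ with $0$ or $1$). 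With that small repair your argument is complete and in fact repairs the paper's own proof.
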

\begin{proof} By Remark \ref{remCS1} (items 1 and 2) the metric space
$(X,\rho)$ is strictly convex. Let now $x_1,x_2,x_3\in X$
($x_2\not= x_3$), $\alpha \in (0,1)$ and $x:=S(\alpha,x_2,x_3)$.
Denote by $p_1:=x_2-x_1$ and $p_2:=x_3-x_1$, then we have
$$
\rho(x_1,x)=|\frac{1}{2}(p_1+p_2)|\le
(1-\varphi(\delta))\max\{|p_1|,|p_2|\}=
$$
$$
(1-\varphi(\delta))\max\{\rho(x_1,x_2),\rho(x_1,x_3)\}<\max\{\rho(x_1,x_2),\rho(x_1,x_3)\}
$$
because $|p_1|=\rho(x_1,x_2)$ and $|p_2|=\rho(x_1,x_3)$. Lemma is
proved.
\end{proof}

For any subset $C$ of $X$ we denote by $co C$ (respectively,
$\overline{co} C$) the convex envelop (respectively, closed convex
envelope) of $C,$ i.e., $\overline{co} C$ (respectively,
$\overline{co} C$) is the intersection of all metric-covex
(respectively, closed, metric-convex) sets containing $C$.

\begin{definition}\label{defE1} A mapping $f:M\to M$ of compact
strictly metric-convex space $(M,\rho)$ is said to be:
\begin{enumerate}
\item segment preserving if $f([x_1,x_2])\subseteq
[f(x_1),f(x_2)]$, where $[x_1,x_2]:=$ $\{S(\alpha,$ $x_1,$ $x_2)|$
$\ 0\le \alpha\le 1\}$, for any $x_1,x_2\in M$; \item quasi-affine
\cite{Shu_1979} if $f(co A)\subseteq co f(A)$ for any subset $A$
of $M$; \item strongly quasi-affine if $f(\overline{co}
A)\subseteq \overline{co} f(A)$ for any subset $A$ of $M$; \item
affine if $f(S(\alpha,x_1,x_2))=S(\alpha;f(x_1),f(x_2))$ for any
$x_1,x_2\in M$ and $\alpha \in [0,1]$.
\end{enumerate}
\end{definition}

\begin{remark}\label{remE1} 1. If the mapping $f:M\to M$ is
quasi-affine, then it is segment preserving because
$[x_1,x_2]=co(\{x_1,x_2\})$.

2. If $M=[0,1]$ and $f:[0,1]\to [0,1]$ is a continuous and
strongly monotone, then it is quasi-affine \cite{Shu_1979}.

3.  If the mapping $f:M\to M$ is affine, then it is quasi-affine.
\end{remark}

\begin{lemma}\label{lQ1} Let $M$ be a nonempty compact convex
subset in locally convex vectorial space $V$. If the quasi-affine
mapping $f:M\to M$ is continuous, then it is strongly
quasi-affine.
\end{lemma}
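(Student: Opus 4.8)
The plan is to deduce the statement from two standard facts: a continuous map sends the closure of a set into the closure of its image, and in a locally convex space the closed convex hull of a set equals the closure of its convex hull. First I would pin down the convex structure: since $M$ is a convex subset of the locally convex space $V$, the metric-convex operations appearing in Definitions~\ref{def12.22} and \ref{defE1} reduce on $M$ to the usual vector-space ones, so for $A\subseteq M$ the envelope $co\,A$ is the ordinary convex hull of $A$ (contained in $M$ by convexity of $M$), while $\overline{co}\,A$, being the smallest closed convex set containing $A$, coincides with $\overline{co\,A}$, the closure of $co\,A$, because in any topological vector space the closure of a convex set is convex. Applying the same remark to $f(A)\subseteq M$ gives $\overline{co}\,f(A)=\overline{co\,f(A)}$; this set is closed, and in fact compact, being a closed subset of the compact set $M$.

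Now the core argument. Fix an arbitrary subset $A\subseteq M$. Continuity of $f$ yields $f(\overline{co\,A})\subseteq \overline{f(co\,A)}$, an inclusion valid for continuous maps between arbitrary topological spaces and thus requiring no metrizability of $M$. The hypothesis that $f$ is quasi-affine gives $f(co\,A)\subseteq co\,f(A)$, whence, taking closures, $\overline{f(co\,A)}\subseteq \overline{co\,f(A)}=\overline{co}\,f(A)$. Combining these inclusions with the identity $\overline{co}\,A=\overline{co\,A}$ from the first step produces $f(\overline{co}\,A)\subseteq \overline{co}\,f(A)$, which is precisely the assertion that $f$ is strongly quasi-affine.

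I do not anticipate a genuine obstacle; the only points requiring care are the structural ones of the first paragraph, namely that the metric-convex hull inside $V$ agrees with the vector-space convex hull and that the closed metric-convex hull is the closure of the convex hull. Should one prefer to avoid any implicit metrizability of $M$ altogether, the continuity step can be run through nets: each $x\in\overline{co}\,A=\overline{co\,A}$ is the limit of a net $(x_\lambda)$ in $co\,A$, so $f(x_\lambda)\in co\,f(A)$ by quasi-affinity and $f(x_\lambda)\to f(x)$ by continuity, forcing $f(x)$ into the closed set $\overline{co}\,f(A)$. Either route finishes the proof.
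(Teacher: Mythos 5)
Your argument is correct and is essentially the paper's own proof: continuity gives $f(\overline{co\,A})\subseteq\overline{f(co\,A)}$, quasi-affinity gives $f(co\,A)\subseteq co\,f(A)$, and taking closures combines the two. The only difference is that you make explicit the identification $\overline{co}\,A=\overline{co\,A}$ (valid in a locally convex space since the closure of a convex set is convex), which the paper uses silently.
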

\begin{proof} Let $A\subseteq M$, then we have $\overline{co}(f(A))\subseteq
M$. Since $f$ is quasi-affine we obtain
\begin{equation}\label{eqEE1}
f(co(A))\subseteq co(f(A)) .\nonumber
\end{equation}
On the other hand $f$ is a continuous map and, consequently,
\begin{equation}\label{eqEE2}
f(\overline{co}(A))\subseteq \overline{f(co(A))}\subseteq
\overline{co}f(A)\nonumber
\end{equation}
for any subset $A\subseteq M$. Lemma is proved.
\end{proof}

Let $ x_{0} $ be an arbitrary point of $M$. We denote by
\begin{equation}\label{eqE1.1*}
\ell (x)= \sup \limits_{\xi \in E} \rho (\xi (x), x_{0})
,\nonumber
\end{equation}
where $E$ is a compact sub-semigroup of the semigroup $M^{M}$.

\begin{lemma}\label{lE1.1} Let $(M,\rho)$ be a compact metric space,
$x_0\in M$ and $E$ be a compact sub-semigroup of the semigroup
$M^{M}$. Then for any $x\in M$ there exists at least one $\xi\in
E$ such that
\begin{equation}\label{eqE1_1*}
\ell(x)=\rho(\xi(x),x_0) .\nonumber
\end{equation}
\end{lemma}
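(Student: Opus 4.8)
The plan is to show that the supremum defining $\ell(x)$ is attained. First I would note that the map $\xi \mapsto \xi(x)$ from $E$ into $M$ is the evaluation map at the fixed point $x$; since $E \subseteq M^M$ carries the topology of pointwise convergence, this evaluation map is continuous by definition of that topology. Composing with the continuous function $z \mapsto \rho(z,x_0)$ on $M$, we obtain a continuous real-valued function $\Phi \colon E \to \mathbb{R}$, $\Phi(\xi) := \rho(\xi(x),x_0)$.

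Next I would invoke compactness of $E$. By hypothesis $E$ is a compact sub-semigroup of $M^M$, so $E$ is a compact topological space; a continuous real-valued function on a compact space attains its supremum. Hence there exists $\xi \in E$ with $\Phi(\xi) = \sup_{\eta \in E} \Phi(\eta) = \ell(x)$, which is exactly the claimed equality $\ell(x) = \rho(\xi(x),x_0)$.

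The only point requiring a word of care is the continuity of $\xi \mapsto \xi(x)$: this is automatic because $M^M$ is equipped with the product (pointwise-convergence) topology, for which each coordinate projection $\mathrm{ev}_x \colon M^M \to M$, $\xi \mapsto \xi(x)$, is continuous, and its restriction to the subspace $E$ remains continuous. I would also remark that $\ell(x)$ is finite (indeed $\le \mathrm{diam}(M)$) since $M$ is compact, so the supremum is over a bounded set and the maximum is a genuine real number. There is essentially no obstacle here; the lemma is a direct application of the extreme value theorem once the topology on $E$ is correctly identified, and it is presumably stated separately only because it will be used repeatedly (together with Lemma~\ref{lQ1} and the convexity structure) in the forthcoming common-fixed-point argument.
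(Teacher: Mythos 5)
Your proof is correct, and it rests on the same two pillars as the paper's: compactness of $E$ and continuity of the map $\xi\mapsto\rho(\xi(x),x_0)$ in the pointwise topology. The difference is purely in the packaging, but it is worth noting. The paper takes a maximizing sequence $\{\xi_n\}\subseteq E$ with $\rho(\xi_n(x),x_0)\to\ell(x)$, asserts that by compactness one may assume $\xi_n\to\xi$ in $M^M$, and then passes to the limit; you instead observe that $\Phi(\xi):=\rho(\xi(x),x_0)$ is a continuous real-valued function on the compact space $E$ (evaluation at $x$ being a coordinate projection for the product topology) and invoke the extreme value theorem. Your version is actually the more robust one: $M^M$ with the product topology is in general neither metrizable nor sequentially compact, so the paper's extraction of a convergent \emph{subsequence} from $\{\xi_n\}$ is not literally justified by compactness alone (one would need a subnet, or a separate metrizability argument for $E$). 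The extreme value theorem formulation sidesteps sequences entirely and gives the attainment of the supremum with no such caveat, so nothing is missing from your argument.
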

\begin{proof} By definition of $l(x)$ there exists a sequence $\{\xi_{n}\}\subseteq
E$such that
\begin{equation}\label{eqE2*}
\ell(x):=\sup\limits_{n\to \infty}\rho(\xi_{n}(x),x_0).
\end{equation}
Since $E$ is a compact sub-semigroup of semigroup $M^{M}$ without
loss of generality we may suppose that $\{\xi_{n}\}$ is convergent
in $M^{M}$. This means that there exists $\xi\in E$ such that
$\xi_n\to \xi$ as $n\to \infty$. In particular we have
\begin{equation}\label{eqE3*}
\xi(p):=\sup\limits_{n\to \infty}\xi_{n}(p)
\end{equation}
for any $p\in M$. From (\ref{eqE2*}) and (\ref{eqE3*}) we obtain
\begin{equation}\label{eqE4*}
\ell(x):=\sup\limits_{n\to
\infty}\rho(\xi_{n}(x),x_0)=\rho(\sup\limits_{n\to
\infty}\xi_{n}(x),x_0)=\rho(\xi(x),x_0).\nonumber
\end{equation}
\end{proof}

\begin{lemma}\label{lE2}
Let $(M,\rho)$ be a compact metric space. Suppose that the
following conditions are fulfilled:
\begin{enumerate}
\item $E$ is a compact sub-semigroup of the semigroup $M^{M}$;
\item every $\xi\in E$ is continuous.
\end{enumerate}
Then there exists a point $x'\in M$ such that $l(x')=l_0$, where
\begin{equation}\label{eq5*}
\ell_0:=\inf\limits_{x\in M}\ell(x) .
\end{equation}
\end{lemma}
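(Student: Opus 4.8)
The plan is to show that the infimum $\ell_0 = \inf_{x \in M} \ell(x)$ is attained, using compactness of $M$ together with continuity (lower semicontinuity will suffice) of the function $\ell \colon M \to \mathbb{R}_{\ge 0}$. So the whole proof reduces to establishing that $\ell$ is lower semicontinuous on the compact space $M$; then a minimizing sequence $\{x_n\}$ has a convergent subsequence $x_{n_k} \to x'$, and lower semicontinuity gives $\ell(x') \le \liminf_k \ell(x_{n_k}) = \ell_0$, while $\ell(x') \ge \ell_0$ is automatic, so $\ell(x') = \ell_0$.

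For the semicontinuity step, I would argue as follows. Fix $x \in M$ and a sequence $x_n \to x$ in $M$. By Lemma \ref{lE1.1}, for each $n$ choose $\xi_n \in E$ with $\ell(x_n) = \rho(\xi_n(x_n), x_0)$. Since $E$ is a compact sub-semigroup of $M^M$, pass to a subsequence so that $\xi_n \to \xi$ in $M^M$, i.e. pointwise on $M$; in particular $\xi_n(x) \to \xi(x)$, and $\xi \in E$. The remaining point is to pass from $\rho(\xi_n(x_n), x_0)$ to $\rho(\xi(x), x_0)$. Here one uses that $M$ is compact, hence $\{\xi_n\}$ may be taken equicontinuous is \emph{not} automatic — but one does not need it: write
\begin{equation*}
\big| \rho(\xi_n(x_n), x_0) - \rho(\xi(x), x_0) \big| \le \rho(\xi_n(x_n), \xi(x)) \le \rho(\xi_n(x_n), \xi_n(x)) + \rho(\xi_n(x), \xi(x)).
\end{equation*}
The second term tends to $0$ by pointwise convergence $\xi_n \to \xi$ at the fixed point $x$. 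For the first term, I would instead exploit continuity of the limit map $\xi$ together with a standard fact for sequences in $M^M$ with $M$ compact: if $\xi_n \to \xi$ pointwise and $\xi$ is continuous, one cannot in general conclude $\xi_n(x_n) \to \xi(x)$, so I would avoid this. The clean route is to not localize the minimizer at all but to use Lemma \ref{lE1.1} only on a minimizing sequence and argue directly: take $x_n \to x'$ with $\ell(x_n) \to \ell_0$, pick $\xi_n \in E$ with $\ell(x_n) = \rho(\xi_n(x_n), x_0)$, extract $\xi_n \to \xi \in E$ pointwise, and then for \emph{each fixed} $\eta \in E$ and each $n$ we have $\rho(\eta(x_n), x_0) \le \ell(x_n)$; since $\eta$ is continuous, letting $n \to \infty$ gives $\rho(\eta(x'), x_0) \le \ell_0$, and taking the supremum over $\eta \in E$ yields $\ell(x') \le \ell_0$, hence equality.

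Thus the key steps, in order, are: (1) set up a minimizing sequence $\{x_n\}$ for $\ell$ and extract, by compactness of $M$, a subsequence converging to some $x' \in M$; (2) for each fixed $\eta \in E$, use $\rho(\eta(x_n), x_0) \le \ell(x_n)$ and continuity of $\eta$ to pass to the limit and get $\rho(\eta(x'), x_0) \le \ell_0$; (3) take the supremum over $\eta \in E$ to conclude $\ell(x') \le \ell_0$, and combine with $\ell(x') \ge \ell_0$. The main obstacle is step (2)–(3): one must be careful that $\sup$ and $\lim$ interact correctly, which is exactly why hypothesis (ii) — continuity of every $\xi \in E$ — is needed, and why the supremum is taken \emph{after} passing to the limit in each fixed map rather than before. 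Compactness of $E$ (hypothesis (i)) enters only insofar as it guarantees, via Lemma \ref{lE1.1}, that $\ell$ is real-valued and finite, so that $\ell_0 < \infty$ and minimizing sequences make sense.
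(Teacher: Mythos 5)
Your final argument is correct and essentially reproduces the paper's proof: both take a minimizing sequence, extract a convergent subsequence $x_n\to x'$ by compactness of $M$, and use continuity of maps in $E$ to pass the bound $\rho(\eta(x_n),x_0)\le \ell(x_n)$ to the limit, concluding $\ell(x')\le \ell_0$. The only difference is organizational: the paper argues by contradiction, using Lemma \ref{lE1.1} to pick a specific $\xi'\in E$ attaining $\ell(x')$, whereas you take the supremum over all $\eta\in E$ \emph{after} the limit, which correctly shows that compactness of $E$ is not actually needed for this lemma (and note that finiteness of $\ell$ already follows from boundedness of the compact space $M$, not from Lemma \ref{lE1.1}).
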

\begin{proof}
Let $\{x_{n}\} \subseteq M $ be a minimizing sequence for
(\ref{eq5*}), i.e.,
\begin{equation}\label{eq12.3.2}
\ell _{0} \le \ell (x_{n}) \le \ell _{0} + \frac{1}{n}\nonumber
\end{equation}
for all $ n \in \mathbb N $. Since the set $M$ is compact, we may
suppose that the sequence $\{x_{n}\} $ is convergent. Let $
x'=\lim \limits _{n \to +\infty} x_{n}$. We will show that $\ell
(x')\le \ell_{0}$. In fact. If we suppose that it is not true,
then there exist $\xi'\in E$ and $\varepsilon_{0}>0$ such that
\begin{equation}\label{eqE1*}
\ell(x')=\rho(\xi'(x'),x_0)>\ell_0+2\varepsilon_0 .
\end{equation}

Since $\ell(x_n)\to \ell_0$ as $n\to \infty$, then there exists a
number $n_0\in\mathbb N$ such that
\begin{equation}\label{eqE2.1*}
-\varepsilon_0 +\ell_0<\ell(x_n)<\ell_0+\varepsilon_0\nonumber
\end{equation}
for any $n\ge n_0$. On the other hand
\begin{equation}\label{eqE3}
\rho(\xi'(x_n),x_0)\le \sup\limits_{\xi\in
E}\rho(\xi(x_n),x_0)=\ell(x_n)<\ell_0+\varepsilon_0
\end{equation}
for any $n\ge n_0$. Passing to the limit in (\ref{eqE3}) as $n\to
\infty$  and taking in consideration the continuity of $\xi'$ we
obtain
\begin{equation}\label{eqE4}
\ell(x')=\lim\limits_{n\to \infty}\rho(\xi'(x_n),x_0)\le
\ell_0+\varepsilon_0 .
\end{equation}
Inequalities (\ref{eqE1*}) and (\ref{eqE4}) are contradictory. The
obtained contradiction proves our statement, i.e., $\ell(x')\le
\ell_0$. Since $ \ell (x) \ge \ell _{0} $ for all $ x \in M $ then
$ \ell (x')=\ell _{0}.$
\end{proof}

\begin{theorem}\label{th12.3.8}
Let $(M,\rho)$ be a compact strongly convex metric space. Suppose
that the following conditions are fulfilled:
\begin{enumerate}
\item  $E$ is a compact sub-semigroup of the semigroup $M^{M}$;
\item every $\xi\in E$ is continuous and quasi-affine.
\end{enumerate}
Then there exists a common fixed point $\bar{x}\in M $ of $ E $,
i.e., $ \xi (\bar{x})=\bar{x} $ for any $ \xi \in E$.
\end{theorem}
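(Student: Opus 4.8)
The plan is to use the function $\ell(x) = \sup_{\xi \in E}\rho(\xi(x),x_0)$ together with Lemmas \ref{lE1.1} and \ref{lE2} to locate a minimizer $x'$ of $\ell$, and then show that $x'$ must be a common fixed point by exploiting strong convexity. First I would apply Lemma \ref{lE2} to obtain a point $x' \in M$ with $\ell(x') = \ell_0 = \inf_{x\in M}\ell(x)$. If $\ell_0 = 0$, then $\xi(x') = x_0$ for every $\xi \in E$; in particular, taking $\xi = \eta\circ\eta'$ and using that $E$ is a semigroup one deduces $x_0$ is itself the common fixed point, so assume $\ell_0 > 0$ from now on.

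The key step is to derive a contradiction from $\ell_0 > 0$ by producing a point where $\ell$ is strictly smaller. Fix an arbitrary $\xi \in E$ and consider the orbit point $\xi(x')$. Since $E$ is a semigroup, for every $\eta \in E$ we have $\eta\circ\xi \in E$, hence $\rho(\eta(\xi(x')),x_0) \le \ell(x') = \ell_0$ for all $\eta \in E$, which shows $\ell(\xi(x')) \le \ell_0$, and therefore $\ell(\xi(x')) = \ell_0$ by minimality. Thus the whole set $\{\xi(x') : \xi \in E\}$, and by Lemma \ref{l9.2.4}-style compactness arguments its closure $K := \overline{E(x')}$, consists of minimizers of $\ell$. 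Now I would pick, via Lemma \ref{lE1.1}, some $\xi^* \in E$ with $\rho(\xi^*(x'),x_0) = \ell(x') = \ell_0$, so the supremum defining $\ell$ is attained on $K$ at distance exactly $\ell_0$ from $x_0$. The idea is then to take a "midpoint'' $m := S(\tfrac12, x', \xi^*(x'))$ (or more generally $W(\tfrac12,\cdot,\cdot)$) and estimate $\ell(m)$: for any $\xi \in E$, since $\xi$ is quasi-affine it is segment preserving (Remark \ref{remE1}(1)), so $\xi(m) \in [\xi(x'),\xi(\xi^*(x'))]$, i.e. $\xi(m) = S(\beta, \xi(x'), \xi\xi^*(x'))$ for some $\beta \in [0,1]$; both endpoints lie in $K$ hence at distance $\le \ell_0$ from $x_0$, and using the convex structure inequality $\rho(x_0, S(\beta,a,b)) \le \beta\rho(x_0,a) + (1-\beta)\rho(x_0,b) \le \ell_0$ we only get $\ell(m)\le \ell_0$. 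To get a strict inequality I need strong convexity: when the two endpoints are distinct, $\rho(x_0, S(\beta, a, b)) < \max\{\rho(x_0,a),\rho(x_0,b)\} \le \ell_0$.

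The main obstacle — and the place where the argument needs real care — is controlling the case where the two endpoints $\xi(x')$ and $\xi\xi^*(x')$ coincide (or come arbitrarily close together as $\xi$ ranges over the compact set $E$), since strong convexity gives strict decrease only for distinct points, and the strict gap is not uniform. The way I would handle this: if $\xi(x') = \xi\xi^*(x')$ for some $\xi$, that does not immediately hurt us because then $\rho(\xi(m),x_0) = \rho(\xi(x'),x_0) \le \ell_0$ trivially; the genuine difficulty is that $\sup_{\xi\in E}\rho(\xi(m),x_0)$ could still equal $\ell_0$ in the limit. To rule this out I would argue by compactness: by Lemma \ref{lE1.1} there is $\xi_0 \in E$ with $\rho(\xi_0(m),x_0) = \ell(m)$; if $\ell(m) = \ell_0$ then $\rho(\xi_0(m),x_0) = \ell_0$, and since $\xi_0(m) = S(\beta_0,\xi_0(x'),\xi_0\xi^*(x'))$ with both endpoints at distance $\le \ell_0$, strong convexity forces $\xi_0(x') = \xi_0\xi^*(x')$ and moreover $\rho(\xi_0(x'),x_0) = \rho(\xi_0\xi^*(x'),x_0) = \ell_0$. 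Applying this reasoning with $m$ replaced by iterated midpoints, or directly comparing $x'$ with $\xi^*(x')$ through $\xi_0$, one forces $\xi_0(x') = \xi_0(\xi^*(x'))$; but then replacing $x'$ by the minimizer $\xi_0(x')$ (which is again a minimizer, as shown above) and iterating $\xi^*$, together with the compactness of $K$ and continuity of the maps, collapses the orbit and yields an actual fixed point of $\xi^*$ at distance $\ell_0$ from $x_0$. Running this for every $\xi \in E$ and using once more that $E$ is a semigroup (so a common minimizer can be extracted) produces $\bar x \in M$ with $\xi(\bar x) = \bar x$ for all $\xi \in E$. I expect the bookkeeping in this last collapsing step, and the precise extraction of a single common fixed point from the family of per-$\xi$ fixed points, to be the most delicate part of the write-up.
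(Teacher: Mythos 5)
You start exactly as the paper does: minimize $\ell(x)=\sup_{\xi\in E}\rho(\xi(x),x_0)$ via Lemma \ref{lE2}, observe that the semigroup property makes the set of minimizers invariant under $E$, and plan to combine quasi-affinity (segment preservation) with strong convexity to force a strict decrease of $\ell$ at a midpoint. The divergence --- and the gap --- lies in which midpoint you take and what you do afterwards. The paper's key structural step is that the minimizing set $M'=\{x\in M:\ \ell(x)=\ell_0\}$ is a \emph{singleton}: if $x_1\neq x_2$ both lie in $M'$, put $x=S(\tfrac12,x_1,x_2)$, pick by Lemma \ref{lE1.1} a $\xi\in E$ with $\ell(x)=\rho(\xi(x),x_0)$; quasi-affinity places $\xi(x)$ on the segment $[\xi(x_1),\xi(x_2)]$, and strong convexity yields $\rho(\xi(x),x_0)<\max\{\rho(\xi(x_1),x_0),\rho(\xi(x_2),x_0)\}\le\ell_0$, contradicting $\ell(x)\ge\ell_0$. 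An invariant one-point set is automatically a common fixed point, and the proof ends there. You instead take the midpoint of $x'$ and $\xi^*(x')$, aim to produce a fixed point of each individual map, and then propose to ``extract'' a common fixed point ``using once more that $E$ is a semigroup.'' That last step is not an argument: for a noncommutative semigroup, knowing that each element has \emph{some} fixed point gives nothing about a \emph{common} fixed point --- that passage is precisely the content of the theorem. Likewise ``iterating $\xi^*$ \dots collapses the orbit'' is asserted, not proved. So the proposal does not close.

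One point in your favour: the degenerate case you flag --- $\xi(x_1)=\xi(x_2)$, or $\xi$ of the midpoint landing at an endpoint of $[\xi(x_1),\xi(x_2)]$, where strong convexity gives no strict inequality --- is a genuine issue, and the paper's own proof passes over it silently by writing the strict inequality unconditionally. But the place to confront it is inside the singleton argument for $M'$, where one must rule out that the extremizing $\xi$ identifies the two minimizers; your per-map fixed-point scheme does not sidestep this difficulty, it only relocates it into an extraction problem that is at least as hard as the theorem itself.
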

\begin{proof}
We put $ M' = \{ x \in M \ |\ \ell (x)= \ell _{0}\ \} $. By Lemma
\ref{lE2} we have $ M' \not= \emptyset $. The set $ M' $ is
invariant with respect to the semigroup $E$, i.e., $ \xi (M')
\subseteq M' $ for any $ \xi \in E $. In fact. Let $\eta \in E$, $
x' \in M'$ and $\tilde{\xi}\in E$, then we have $\ell
(\eta(x'))\ge \ell_{0}$. On the other hand taking in consideration
that $E$ is a semigroup then we obtain $\tilde{\xi}\eta\in E$ and
$$
\ell (\eta (x'))=\sup \limits _{\tilde{\xi} \in E} \rho
(\tilde{\xi} (\eta (x')), x_{0}) \le \sup \limits _{\xi \in E}
\rho (\xi (x'),x_{0}) = \ell (x')=\ell _{0}
$$
and, consequently, $ \ell (\eta (x')) =\ell _{0}$. We will show
now that the set $ M'$ consists of a single point. In fact, if we
suppose the contrary, then there exist $x_{1},x_{2} \in M' \
(x_{1}\not= x_{2}) $. We consider $ x= S(\frac{1}{2},x_{1},x_{2})
$, which is an element from $M$, because $M$ is a strongly convex
metric space. According to the quasi-affinity of $\xi$ we have
$\xi (co\{x_1,x_2\})\subseteq co\{\xi(x_1),\xi(x_2)\}\subseteq M$.
Since the semigroup $E$ is compact, then under the conditions of
Theorem \ref{th12.3.8} there exists $ \xi \in E $ such that $ \ell
_{0} \le \ell (x)=\rho (\xi (x),x_{0})$. On the other hand,
according to the strongly convexity of $M$ and quasi-affinity of
$\xi$ we have $ \rho (\xi (x),x_{0}) < \max \{\rho (\xi
(x_{1}),x_{0}), \rho (\xi (x_{2}),x_{0})\} = \ell _{0} $ and,
consequently, $ \ell (x) < \ell _{0}$. The obtained contradiction
proves that $ M'$ contains a unique point $\bar{x}$. Taking into
account the invariance of the set $ M'$ with respect to the
semigroup $E$, we have $ \xi (\bar{x})=\bar{x} $ for any $\xi \in
E $. Theorem is proved.
\end{proof}

Let $\mathfrak B$ be a finite-dimensional Banach space over the
field $\mathbb K$ ($\mathbb K =\mathbb R$ or $\mathbb C$). Denote
by $\mathbb K^{n}:=\mathbb K\times \mathbb K\times \ldots \times
K$, then by equality
\begin{equation}\label{eqSP1}
\langle \xi,\eta\rangle :=\sum_{i=1}^{n}\xi_{i}\bar{\eta_{i}}
\nonumber
\end{equation}
for any $\xi,\eta\in \mathbb K^{n}$, is defined a scalar product
on $\mathbb K^{n}$.

\begin{theorem}\label{thE1}\cite[ChI]{LS_1975} Let $(\mathfrak B,||\cdot||)$ be a finite-dimensional Banach space over the
field $\mathbb K$, then the following statements hold:
\begin{enumerate}
\item the space $(\mathbb K^{n},|\cdot|)$, where
$|\cdot|^2:=\langle \cdot,\cdot \rangle$, is an $n$-dimensional
Hilbert space; \item the Banach spaces $(\mathfrak B,||\cdot||)$
and $(\mathbb K^n,|\cdot|)$ are isomorphic.
\end{enumerate}
\end{theorem}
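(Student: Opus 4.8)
The plan is to treat the two assertions separately, the first being essentially a definitional remark and the second the classical fact that every finite--dimensional normed space over $\mathbb{K}=\mathbb{R}$ or $\mathbb{C}$ is linearly homeomorphic to the Euclidean space $\mathbb{K}^{n}$.

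For item (i) I would first note that the formula defining $\langle\cdot,\cdot\rangle$ is a genuine (Hermitian, positive--definite) scalar product on $\mathbb{K}^{n}$, so $(\mathbb{K}^{n},|\cdot|)$ is an inner product space of dimension $n$. It then only remains to check completeness: convergence in the norm $|\cdot|$ is coordinatewise convergence, so a $|\cdot|$--Cauchy sequence is Cauchy in each coordinate, each coordinate converges because $\mathbb{K}$ is complete, and the coordinatewise limit is the $|\cdot|$--limit. Hence $(\mathbb{K}^{n},|\cdot|)$ is an $n$--dimensional Hilbert space.

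For item (ii) I would fix an algebraic basis $e_{1},\dots,e_{n}$ of $\mathfrak B$ and define the linear map $T:\mathbb{K}^{n}\to\mathfrak B$ by $T\xi:=\sum_{i=1}^{n}\xi_{i}e_{i}$; since $\{e_{i}\}$ is a basis, $T$ is a bijection. Continuity of $T$ follows from the triangle inequality and the Cauchy--Schwarz inequality: $\|T\xi\|\le\sum_{i}|\xi_{i}|\,\|e_{i}\|\le\bigl(\sum_{i}\|e_{i}\|^{2}\bigr)^{1/2}|\xi|=:C|\xi|$, and in particular $\|T\xi-T\eta\|\le C|\xi-\eta|$, so the map $\varphi(\xi):=\|T\xi\|$ is continuous on $(\mathbb{K}^{n},|\cdot|)$.

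The heart of the argument, and the only nontrivial step, is the continuity of $T^{-1}$. Here I would restrict $\varphi$ to the Euclidean unit sphere $S:=\{\xi\in\mathbb{K}^{n}:|\xi|=1\}$: by the Heine--Borel theorem $S$ is compact in $(\mathbb{K}^{n},|\cdot|)$, and $\varphi$ is continuous and strictly positive on $S$ (positive because $T$ is injective and $0\notin S$), so $m:=\min_{\xi\in S}\varphi(\xi)>0$. By homogeneity $\|T\xi\|\ge m|\xi|$ for all $\xi\in\mathbb{K}^{n}$, equivalently $|T^{-1}y|\le m^{-1}\|y\|$ for all $y\in\mathfrak B$, so $T^{-1}$ is bounded and $T$ is a linear homeomorphism, which is the asserted isomorphism. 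The point to be careful about is not to invoke "equivalence of norms in finite dimension" as a black box, since that is essentially what is being proved; the compactness-of-the-sphere computation above is self--contained once Heine--Borel for $\mathbb{K}^{n}$ is granted.
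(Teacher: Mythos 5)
The paper offers no proof of this statement: it is imported verbatim as a classical result from Lusternik--Sobolev (the citation \cite[ChI]{LS_1975} is the whole justification), so there is no in-paper argument to compare yours against. Your proof is the standard textbook argument and is correct. Part (i) is the routine check that the displayed form is a positive-definite Hermitian product plus coordinatewise completeness of $\mathbb K^{n}$. Part (ii) is the classical two-sided estimate for the coordinate map $T\xi=\sum_{i}\xi_{i}e_{i}$: the upper bound $\Vert T\xi\Vert\le C\vert\xi\vert$ via Cauchy--Schwarz, and the lower bound $\Vert T\xi\Vert\ge m\vert\xi\vert$ obtained by minimizing the continuous, strictly positive function $\xi\mapsto\Vert T\xi\Vert$ on the Euclidean unit sphere, which is compact by Heine--Borel; homogeneity then gives boundedness of $T^{-1}$. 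You are also right to insist on not invoking ``equivalence of norms in finite dimensions'' as a black box, since that equivalence is precisely the content of part (ii); your compactness argument keeps the proof self-contained modulo Heine--Borel for $\mathbb K^{n}$, which is exactly how the cited source proves it.
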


\begin{lemma}\label{lE1} Let $U$ be a linear isomorphism between
the Banach spaces $\mathfrak B$ and $\mathbb K^n$, $M$ be a
compact convex subset of $\mathfrak B$, $\tilde{M}:=U(M)$ and $E$
be a compact sub-semigroup of the semigroup $M^M$. Then the
following statements hold:
\begin{enumerate}
\item $\tilde{M}$ is compact and convex subset of $\mathbb K^{n}$;
\item $\tilde{\xi}:=U\xi U^{-1}\in \tilde{M}^{\tilde{M}}$ for any
$\xi\in M^{M}$; \item the map  $\tilde{\xi}:=U\xi U^{-1}$ is
continuous and quasi-affine if $\xi\in E$ is so; \item
$\tilde{E}:=\{\tilde{\xi}|\ \xi\in E\}$ is a compact sub-semigroup
of the semigroup $\tilde{M}^{\tilde{M}}$ .
\end{enumerate}
\end{lemma}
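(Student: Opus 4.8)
The plan is to verify each of the four claims more or less directly, using that $U$ is a linear homeomorphism (being a linear isomorphism of finite-dimensional spaces it is automatically bicontinuous) and that $U$ intertwines the convex structure of $\mathfrak B$ with that of $\mathbb K^n$. The only slightly delicate point is the compactness of $\tilde E$, which requires identifying the correct topologies on $M^M$ and $\tilde M^{\tilde M}$ and checking that conjugation by $U$ is a homeomorphism between them.

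First I would record that since $\mathfrak B$ is finite-dimensional and $U:\mathfrak B\to\mathbb K^n$ is a linear bijection, both $U$ and $U^{-1}$ are continuous; in particular $U$ is a homeomorphism. Claim (i) is then immediate: $\tilde M=U(M)$ is the continuous image of the compact set $M$, hence compact, and for $x_1,x_2\in M$, $\alpha\in[0,1]$ one has $U(\alpha x_1+(1-\alpha)x_2)=\alpha U(x_1)+(1-\alpha)U(x_2)$, so $U$ carries convex combinations to convex combinations and $\tilde M$ is convex (and $U$ restricted to $M$ is an affine homeomorphism onto $\tilde M$). Claim (ii) is a trivial set-theoretic verification: for $\xi\in M^M$, the composite $\tilde\xi:=U\xi U^{-1}$ is a well-defined map $\tilde M\to\tilde M$ because $U^{-1}(\tilde M)=M$, $\xi(M)\subseteq M$, and $U(M)=\tilde M$.

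For claim (iii), suppose $\xi\in E$ is continuous and quasi-affine. Continuity of $\tilde\xi=U\xi U^{-1}$ follows from continuity of each factor. For quasi-affinity, let $\tilde A\subseteq\tilde M$ and put $A:=U^{-1}(\tilde A)\subseteq M$. Since $U$ is an affine homeomorphism from $M$ onto $\tilde M$, it commutes with the operation $\mathrm{co}(\cdot)$ of taking convex hull inside the respective ambient spaces: $U(\mathrm{co}\,A)=\mathrm{co}\,U(A)$ and likewise for $U^{-1}$. Hence
\[
\tilde\xi(\mathrm{co}\,\tilde A)=U\xi\bigl(U^{-1}(\mathrm{co}\,\tilde A)\bigr)=U\xi(\mathrm{co}\,A)\subseteq U(\mathrm{co}\,\xi(A))=\mathrm{co}\,U(\xi(A))=\mathrm{co}\,\tilde\xi(\tilde A),
\]
using the quasi-affinity of $\xi$ in the middle inclusion. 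This gives claim (iii).

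The main obstacle is claim (iv), the compactness of $\tilde E$. Here I would argue that the conjugation map $\Phi:M^M\to\tilde M^{\tilde M}$, $\Phi(\xi)=U\xi U^{-1}$, is a homeomorphism when both function spaces carry the product (pointwise) topology. Indeed $\Phi$ is a bijection with inverse $\psi\mapsto U^{-1}\psi U$, and it is continuous for the product topology because for each fixed $\tilde x\in\tilde M$ the evaluation $\xi\mapsto\Phi(\xi)(\tilde x)=U\xi(U^{-1}\tilde x)$ is continuous, being the composition of the evaluation $\xi\mapsto\xi(U^{-1}\tilde x)$ with the continuous map $U$; the same reasoning applies to $\Phi^{-1}$. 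Moreover $\Phi$ is a semigroup homomorphism: $\Phi(\xi_1\xi_2)=U\xi_1\xi_2U^{-1}=(U\xi_1U^{-1})(U\xi_2U^{-1})=\Phi(\xi_1)\Phi(\xi_2)$. Since $E$ is a compact sub-semigroup of $M^M$, its image $\tilde E=\Phi(E)$ is a compact sub-semigroup of $\tilde M^{\tilde M}$. This completes all four claims, and the lemma is proved.
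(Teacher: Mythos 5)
Your proposal is correct and follows essentially the same route as the paper: compactness and convexity of $\tilde M$ from linearity and continuity of $U$, the identity $U(\mathrm{co}\,A)=\mathrm{co}\,U(A)$ to transfer quasi-affinity, and continuity of the conjugation map $\xi\mapsto U\xi U^{-1}$ in the pointwise topology to get compactness of $\tilde E$, plus the algebraic identity $U\xi_1U^{-1}U\xi_2U^{-1}=U\xi_1\xi_2U^{-1}$ for the semigroup property. Your write-up is slightly more explicit (noting that conjugation is a homeomorphism and a semigroup homomorphism), but the substance is identical to the paper's proof.
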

\begin{proof} Since the map $U:\mathfrak B\to \mathbb K^n$ is
linear, continuous and the set $M$ is compact and convex, then
$\tilde{M}=U(M)\subseteq \mathbb K^n$ is also compact and convex.

The second statement is evident.

To prove the third statement we notice that the map $\tilde{\xi}$
is continuous because it is a composition of three continuous
mappings. Let $\tilde{A}$ be a subset of $\tilde{M}$ and
$A:=U^{-1}(\tilde{A})\subseteq M$, then by linearity of $U$ we
obtain
\begin{equation}\label{eqE0}
U(coA)=coU(A) .\nonumber
\end{equation}
On the other hand since the map $\xi$ is quasi-affine we have
\begin{eqnarray}\label{eqE1}
& \tilde{\xi}(co\tilde{A})=U\xi U^{-1}(coU(A))=U\xi (co
A)\subseteq U(co(\xi(A))=\nonumber \\
& co U\xi U^{-1} U(A)=co \tilde{\xi}(\tilde{A}),\nonumber
\end{eqnarray}
i.e., the map $\tilde{\xi}$ is quasi-affine.

Consider the map $\Phi :E\to \tilde{E}$ defined by equality
$\Phi(\xi):=U\xi U^{-1}$ for any $\xi\in E$. It is clear that
$\Phi$ is continuous (with respect to pointwise topology on $E$),
$\Phi(E)=\tilde{E}$ and, consequently, $\tilde{E}$ is a compact
subset of $\tilde{M}^{\tilde{M}}$.

To finish the proof of Lemma we need to establish that $\tilde{E}$
is a sub-semigroup of the semigroup $\tilde{M}^{\tilde{M}}$. Let
$\tilde{\xi_{i}}\in \tilde{E}$ ($i=1,2$) we will show that
$\tilde{\xi_{1}}\tilde{\xi_{2}}\in \tilde{E}$. In fact. Since
$\tilde{\xi_{i}}\in \tilde{E}$ ($i=1,2$) the we get
\begin{equation}\label{eqE2}
\tilde{\xi_{1}}\tilde{\xi_{2}}=U\xi_{1}U^{-1}U\xi_{2}U^{-1}=U\xi_{1}\xi_2U^{-1}\in
\tilde{E} \nonumber
\end{equation}
because $\xi_{1}\xi_{2}\in E$, since $E$ is a semigroup.
\end{proof}

\begin{theorem}\label{th12.3.8*}
Let $\mathfrak B$ be a finite-dimensional Banach space over the
field $\mathbb K$ ($\mathbb K =\mathbb R$ or $\mathbb C$), $M$ be
a compact convex subset of $\mathfrak B$. Suppose that the
following conditions are fulfilled:
\begin{enumerate}
\item  $E$ is a compact sub-semigroup of the semigroup $M^{M}$;
\item every $\xi\in E$ is continuous and quasi-affine.
\end{enumerate}
Then there exists a common fixed point $\bar{x}\in M $ of $E$.
\end{theorem}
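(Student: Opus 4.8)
The plan is to reduce Theorem \ref{th12.3.8*} to Theorem \ref{th12.3.8} by transporting the whole situation from the finite-dimensional Banach space $\mathfrak B$ to the Hilbert space $(\mathbb K^n,|\cdot|)$ via the linear isomorphism supplied by Theorem \ref{thE1}. Concretely, let $U:\mathfrak B\to\mathbb K^n$ be the isomorphism from Theorem \ref{thE1}(ii), set $\tilde M:=U(M)$ and $\tilde E:=\{U\xi U^{-1}\mid \xi\in E\}$. By Lemma \ref{lE1}, $\tilde M$ is a compact convex subset of $\mathbb K^n$, every $\tilde\xi\in\tilde E$ is continuous and quasi-affine, and $\tilde E$ is a compact sub-semigroup of $\tilde M^{\tilde M}$. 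Thus hypotheses (i)--(ii) of the target theorem are preserved verbatim under $U$.

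Next I would upgrade the \emph{metric} structure. Equip $\mathbb K^n$ with the Hilbert metric $\rho(x_1,x_2):=|x_1-x_2|$. By Remark \ref{remCS1}(4), every convex closed subset of a Hilbert space with this metric is a strongly convex metric space; since $\tilde M$ is compact (hence closed) and convex, $(\tilde M,\rho)$ is a compact strongly convex metric space. This is exactly the ambient hypothesis of Theorem \ref{th12.3.8}. (Alternatively one could invoke Lemma \ref{lUC1} together with the fact that Hilbert spaces are uniformly convex, but Remark \ref{remCS1}(4) gives it directly.)

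Applying Theorem \ref{th12.3.8} to the compact strongly convex metric space $(\tilde M,\rho)$ with the compact quasi-affine sub-semigroup $\tilde E$ yields a common fixed point $\tilde{\bar x}\in\tilde M$, i.e. $\tilde\xi(\tilde{\bar x})=\tilde{\bar x}$ for all $\tilde\xi\in\tilde E$. Finally I would pull this point back: set $\bar x:=U^{-1}(\tilde{\bar x})\in M$. For any $\xi\in E$ we have $\tilde\xi=U\xi U^{-1}\in\tilde E$, so $U\xi U^{-1}(\tilde{\bar x})=\tilde{\bar x}$, hence $\xi(\bar x)=U^{-1}\tilde\xi U(\bar x)=U^{-1}\tilde\xi(\tilde{\bar x})=U^{-1}(\tilde{\bar x})=\bar x$. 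Thus $\bar x$ is a common fixed point of $E$, which proves the theorem.

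I do not expect a genuine obstacle here: the theorem is essentially a corollary of Theorem \ref{th12.3.8} once one recognizes that a finite-dimensional Banach space, while not itself strongly convex, always admits a linear isomorphism onto a Hilbert space that carries compactness, convexity, quasi-affinity and the semigroup property along with it. The only point requiring a little care is that strong convexity of the metric is \emph{not} preserved by the isomorphism $U$ (it is not an isometry), which is precisely why one must re-impose the Hilbert metric on the target and invoke Remark \ref{remCS1}(4) rather than trying to transport a convex structure from $\mathfrak B$; keeping this conceptual distinction straight is the one place where a careless argument could go wrong.
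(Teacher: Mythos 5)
Your proposal is correct and follows essentially the same route as the paper's own proof: transport $M$ and $E$ to $\mathbb K^{n}$ via the isomorphism of Theorem \ref{thE1}, invoke Lemma \ref{lE1} to preserve compactness, convexity, quasi-affinity and the semigroup structure, apply Theorem \ref{th12.3.8} in the strongly convex (Hilbert/uniformly convex) setting, and pull the fixed point back. Your explicit appeal to Remark \ref{remCS1}(4) merely spells out what the paper leaves implicit when it cites uniform convexity of $\mathbb K^{n}$.
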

\begin{proof} Denote by $n:=dim \mathfrak B$ and $\mathbb K^n$ the
$n$-dimensional Euclidean space over the field $\mathbb K$, then
by Theorem \ref{thE1} there exists a linear isomorphism $\Phi$
between $\mathfrak B$ and $\mathbb E^n$. Let $\tilde{M}:=\Phi(M)$
and $\tilde{E}:=\{\Phi \xi \Phi^{-1}|\ \xi\in E\}$, then by Lemma
\ref{lE1} we have
\begin{enumerate}
\item $\tilde{M}$ is compact and convex subset of $\mathbb K^{n}$;
\item $\tilde{\xi}:=U\xi U^{-1}\in \tilde{M}^{\tilde{M}}$ for any
$\xi\in M^{M}$; \item the map  $\tilde{\xi}:=U\xi U^{-1}$ is
continuous and quasi-affine if $\xi\in E$ is so; \item
$\tilde{E}:=\{\tilde{\xi}|\ \xi\in E\}$ is a compact sub-semigroup
of the semigroup $\tilde{M}^{\tilde{M}}$ .
\end{enumerate}

Since $\mathbb K^{n}$ is a finite-dimensional uniformly convex
Banach space, then by Theorem \ref{th12.3.8} there exists at least
one point $\bar{v}\in \tilde{M}$ such that
$\tilde{\xi}(\bar{v})=\bar{v}$ for any $\tilde{\xi}\in\tilde{E}$.
To finish the proof of Theorem it is sufficient to note that
$\xi(\bar{x})=\bar{x}$ for any $\xi\in E$, where
$\bar{x}:=\Phi^{-1}(\bar{v})$.
\end{proof}

\section{Favard's theory}\label{Sec4}

Let $(\mathfrak B, |\cdot |)$ be a Banach space with the norm
$|\cdot|$, $\mathbb T \supseteq \mathbb S_{+}$ be a subsemigroup
of group $\mathbb S$ and $\langle \mathfrak B, \varphi, (Y,\mathbb
S, \sigma)\rangle$ (or shortly $\varphi$) be a linear cocycle over
dynamical system $(Y,\mathbb S,\sigma)$ with the fibre $\mathfrak
B$, i.e., $\varphi$ is a continuous mapping from $\mathbb T\times
\mathfrak B \times Y$ into $\mathfrak B$ satisfying the following
conditions:
\begin{enumerate}
\item $\varphi(0,u,y)=u$ for any $u\in\mathfrak B$ and $y\in Y$;
\item
$\varphi(t+\tau,u,y)=\varphi(t,\varphi(\tau,u,y),\sigma(\tau,y))$
for any $t,\tau\in\mathbb T$, $u\in \mathfrak B$ and $y\in Y$;
\item for any $(t,y)\in \mathbb T\times Y$ the mapping
$\varphi(t,\cdot,y):\mathfrak B\mapsto \mathfrak B$ is linear.
\end{enumerate}

Denote by $[\mathfrak B]$\index{$[\mathfrak B]$} the Banach space
of any linear bounded operators $A$ acting on the space $\mathfrak
B$ equipped with the operator norm
$||A||:=\sup\limits_{|x|\le1}|Ax|$\index{$||A||$}.

\begin{example}\label{exLS1}  Let $Y$ be a complete metric space and
$(Y,\mathbb R,\sigma)$ be a dynamical system on $Y$. Consider the
following linear differential equation
\begin{equation}\label{eqLS01.81}
x'=A(\sigma(t,y))x,\  \ (y\in Y)
\end{equation}
where $A\in C(Y,[\mathfrak B])$. Note that the following
conditions are fulfilled for equation (\ref{eqLS01.81}):
\begin{enumerate}
\item[a.] for any $ u \in \mathfrak B $ and $y\in Y $ equation
(\ref{eqLS01.81}) has exactly one solution that is defined on $
\mathbb R $ and satisfies the condition $ \varphi (0,u,y) = u ;$
\item[b.] the mapping $ \varphi : (t,u,y) \to \varphi (t,u,y) $ is
continuous in the topology of $ \mathbb R\times \mathfrak B \times
Y$.
\end{enumerate}

Under the above assumptions equation (\ref{eqLS01.81}) generates a
linear cocycle $\langle \mathfrak B, \varphi, (Y,\mathbb R,
\sigma)\rangle$ over dynamical system $(Y,\mathbb R,\sigma)$ with
the fibre $\mathfrak B$.
\end{example}

\begin{example}\label{exLS02} Consider differential equation
\begin{equation}\label{eqLS02..8}
x'=A(t)x,
\end{equation}
where $A\in C(\mathbb R,[\mathfrak B])$. Along this equation
(\ref{eqLS02..8}) consider its $H$-class, i.e., the following
family of equations
\begin{equation}\label{eqLS03}
x'=B(t)x,
\end{equation}
where $B\in H(A)$. Note that the following conditions are
fulfilled for equation (\ref{eqLS02..8}) and its $H$-class
(\ref{eqLS03}):
\begin{enumerate}
\item[a.] for any $ u \in \mathfrak B $ and $ B \in H(A) $
equation (\ref{eqLS03}) has exactly one solution $ \varphi
(t,u,B)$ satisfying the condition $ \varphi (0, u, B ) = v $;
\item[b.] the mapping $\varphi : (t,u,B ) \to \varphi (t,u,B )$ is
continuous in the topology of $\mathbb R \times \mathfrak B \times
C(\mathbb R; [\mathfrak B])$.
\end{enumerate}

Denote by $(H(A),\mathbb R,\sigma)$ the shift dynamical system on
$H(A)$. Under the above assumptions equation (\ref{eqLS02..8})
generates a linear cocycle $\langle \mathfrak B, \varphi,
(H(A),\mathbb R, \sigma)\rangle$ over dynamical system
$(H(A),\mathbb R,\sigma)$ with the fibre $\mathfrak B$.

Note that equation (\ref{eqLS02..8}) and its $H$-class can be
written in the form (\ref{eqLS01.81}). In fact. We put $Y:=H(A)$
and denote by $\mathcal A \in C(Y,[\mathfrak B])$ defined by
equality $\mathcal A(B):=B(0)$ for any $B\in H(A)=Y$, then
$B(\tau)=\mathcal A(\sigma(B,\tau))$ ($\sigma(\tau,B):=B_{\tau}$,
where $B_{\tau}(t):=B(t+\tau)$ for any $t\in\mathbb R$). Thus
equation (\ref{eqLS02..8}) with its $H$-class can be rewrite as
follow
\begin{equation}\label{eqLS04}
x'=\mathcal A(\sigma(t,B))x.  \ (B\in H(A))\nonumber
\end{equation}
\end{example}


\begin{definition}\label{def2.11.1}
Let $\langle (X,\mathbb{T}_{+},\pi),\, (Y,\mathbb{T},\sigma),
h\rangle$ be a linear nonautonomous (affine) dynamical system. A
nonautonomous dynamical system $\langle
(W,$$\mathbb{T}_{+},$$\mu),$$\, ({Z},$$\mathbb{T},$$\lambda),$$
\varrho\rangle$ is said to be linear non-homogeneous, generated by
linear (homogeneous) dynamical system $\langle
(X,\mathbb{T}_{+},\pi),$ $(Y,\mathbb{T},\sigma), h\rangle$, if the
following conditions hold:
 \begin{enumerate}
 \item[1.]
there exits a homomorphism $q$ of the dynamical system
$({Z},\mathbb{T},\lambda)$ onto $(Y,\mathbb{T},\sigma)$; \item[2.]
the space $W_y:=(q\circ \rho)^{-1}(y)$ is affine for all $y\in
(q\circ \varrho)(W)\subseteq Y$ and the vectorial space
$X_y=h^{-1}(y)$ is an associated space to $W_y$
(\cite[p.175]{shv1}). The mapping $\mu^t:W_y\to W_{\sigma^ty}$ is
affine and $\pi^t:X_y\to X_{\sigma^ty}$ is its linear associated
function (\cite[p.179]{shv1}), i.e., $X_y=\{w_1-w_2\ \vert \
w_1,w_2\in W_y\}$ and $\mu^tw_1-\mu^tw_2=\pi^t(w_1-w_2)$ for all
$w_1,w_2\in W_y$ and $t\in \mathbb T_{+}$.
 \end{enumerate}
\end{definition}

\begin{remark}\label{r2.11.6}
The definition of linear non-homogeneous system\index{linear
non-homogeneous system}, associated by the given linear system, is
given in the work \cite{bro84}, but our definition is more general
and sometimes more flexible.
\end{remark}

Let $\langle \mathfrak B, \varphi, (Y,\mathbb T, \sigma)\rangle$
be a linear cocycle over dynamical system $(Y,\mathbb T,\sigma)$
with the fibre $\mathfrak B$, $f\in C(Y,\mathbb B)$ and $\psi$ be
a mapping from $\mathbb T\times \mathfrak B \times Y$ into
$\mathfrak B$ defined by equality
\begin{equation}\label{eqLS5.8}
\psi(t,u,y):=U(t,y)u+\int_{0}^{t}U(t-\tau,\sigma(\tau,y))f(\sigma(\tau,y))d\tau
\ \ \mbox{if}\ \mathbb T=\mathbb R \nonumber
\end{equation}
and
\begin{equation}\label{eqLS6.8}
\psi(t,u,y):=U(t,y)u+ \sum_{\tau
=0}^{t}U(t-\tau,\sigma(\tau,y))f(\sigma(\tau,y)) \ \ \mbox{if}\
\mathbb T=\mathbb Z.\nonumber
\end{equation}

From the definition of mapping $\psi$ it follows that $\psi$
possesses the following properties:
\begin{enumerate}
\item[1.] $\psi(0,u,y)=u$ for any $(u,y)\in \mathfrak B\times Y$;
\item[2.] $\psi(t+\tau,u,y)=\psi(t,\psi(\tau,u,y),\sigma(\tau,y))$
for any $t,\tau\in \mathbb T$ and $(u,y)\in \mathfrak B\times Y$;
\item[3.] the mapping $\psi :\mathbb T\times \mathfrak B\times
Y\mapsto \mathfrak B$ is continuous; \item[4.] $\psi (t, u,y)-
\psi(t,v,y)= \varphi(t,u-v,y)$ for any $t\in\mathbb T$, $u,v\in
\mathfrak B$ and $y\in Y$, i.e., the mapping
$\psi(t,\cdot,y):\mathfrak B\mapsto \mathfrak B$ is affine for
every $(t,y)\in \mathbb T\times Y$.
\end{enumerate}

\begin{definition}\label{defAF1} A triplet $\langle \mathfrak B,\psi, (Y,\mathbb
T,\sigma)\rangle$ is called an affine (nonhomogeneous) cocycle
\index{an affine (nonhomogeneous) cocycle} over dynamical system
$(Y,\mathbb T,\sigma)$ with the fibre $\mathfrak B$, if the $\psi$
is a mapping from $\mathbb T\times \mathfrak B\times Y$ into
$\mathfrak B$ possessing the properties 1.-4.
\end{definition}

\begin{remark}\label{remNH1} If we have a linear cocycle $\langle \mathfrak B, \varphi,
(Y,\mathbb T, \sigma)\rangle$ over dynamical system $(Y,\mathbb
T,\sigma)$ with the fibre $\mathfrak B$ and $f\in C(Y,\mathbb B)$,
then by equality (\ref{eqLS5.8}) (respectively, by
(\ref{eqLS6.8})) is defined an affine cocycle $\langle \mathfrak
B, \psi, (Y,\mathbb T, \sigma)\rangle$ over dynamical system
$(Y,\mathbb T,\sigma)$ with the fibre $\mathfrak B$ which is
called an affine (nonhomogeneous) cocycle associated by linear
cocycle $\varphi$ and the function $f\in C(Y,\mathfrak B)$.
\end{remark}

\begin{lemma}\label{lB1} Let $\langle \mathfrak B,\varphi, (Y,\mathbb
T,\sigma)\rangle$ be a linear cocycle and $\mathfrak B$ be a
finite dimensional Banach space. Then there exists a positive
constant $L$ such that
\begin{equation}\label{eqB1}
|\varphi(t,u,y)|\le L|u|
\end{equation}
for any $t\ge 0$ and $u\in \mathfrak B^{+}_{y}:=\{x\in \mathfrak B
|\ \sup\limits_{t\ge 0}|\varphi(t,u,y)|<+\infty\}$.
\end{lemma}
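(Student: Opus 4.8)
The plan is to argue by contradiction using the boundedness of the subspace $\mathfrak B^{+}_y$ together with a closed-graph / uniform-boundedness type argument, exploiting that $\mathfrak B$ is finite dimensional. First I would observe that $\mathfrak B^{+}_y$ is a linear subspace of $\mathfrak B$: indeed, by the linearity of $\varphi(t,\cdot,y)$ we have $\varphi(t,\alpha u_1+\beta u_2,y)=\alpha\varphi(t,u_1,y)+\beta\varphi(t,u_2,y)$, so if $u_1,u_2\in\mathfrak B^{+}_y$ then $\sup_{t\ge 0}|\varphi(t,\alpha u_1+\beta u_2,y)|<+\infty$. Since $\mathfrak B$ is finite dimensional, $\mathfrak B^{+}_y$ is closed and the unit sphere $S:=\{u\in\mathfrak B^{+}_y\,|\,|u|=1\}$ is compact. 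For each fixed $t\ge 0$ the map $u\mapsto|\varphi(t,u,y)|$ is continuous on $S$, so $c(t):=\sup_{u\in S}|\varphi(t,u,y)|<+\infty$; by homogeneity, $|\varphi(t,u,y)|\le c(t)|u|$ for all $u\in\mathfrak B^{+}_y$. The goal is then to show $L:=\sup_{t\ge 0}c(t)<+\infty$.

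Next I would suppose, for contradiction, that no such $L$ exists; then there are sequences $t_n\ge 0$ and $u_n\in S$ with $|\varphi(t_n,u_n,y)|\to+\infty$. By compactness of $S$ pass to a subsequence with $u_n\to u_*\in S\subseteq\mathfrak B^{+}_y$. The key step is to transfer the blow-up from $u_n$ to the limit $u_*$. Write $\varphi(t_n,u_n,y)=\varphi(t_n,u_*,y)+\varphi(t_n,u_n-u_*,y)$. Since $u_*\in\mathfrak B^{+}_y$, the first term stays bounded: $|\varphi(t_n,u_*,y)|\le\sup_{t\ge 0}|\varphi(t,u_*,y)|=:C_*<+\infty$. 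Hence $|\varphi(t_n,u_n-u_*,y)|\to+\infty$ as well, while $|u_n-u_*|\to 0$. To get a contradiction I need to rule out this kind of "unbounded amplification of small vectors." The natural device is a fixed basis of $\mathfrak B^{+}_y$, say $e_1,\dots,e_m$: each $e_j\in\mathfrak B^{+}_y$, so $M_j:=\sup_{t\ge 0}|\varphi(t,e_j,y)|<+\infty$. Writing any $u\in\mathfrak B^{+}_y$ in coordinates $u=\sum_j\lambda_j(u)e_j$ with $|\lambda_j(u)|\le\kappa|u|$ (equivalence of norms on a finite-dimensional space, $\kappa$ depending only on the basis), linearity gives $|\varphi(t,u,y)|\le\sum_j|\lambda_j(u)|M_j\le\kappa\bigl(\sum_j M_j\bigr)|u|$ for every $t\ge 0$. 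Thus $L:=\kappa\sum_{j=1}^{m}M_j$ works, and in fact no contradiction argument is even needed — the basis estimate is direct.

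So the cleanest route is: (1) show $\mathfrak B^{+}_y$ is a linear subspace, using linearity of $\varphi(t,\cdot,y)$; (2) pick a (finite) basis $e_1,\dots,e_m$ of $\mathfrak B^{+}_y$ and set $M_j:=\sup_{t\ge 0}|\varphi(t,e_j,y)|$, finite by definition of $\mathfrak B^{+}_y$; (3) invoke equivalence of norms on the finite-dimensional space $\mathfrak B^{+}_y$ to bound the coordinate functionals $\lambda_j$ by a constant times $|\cdot|$; (4) combine by linearity to obtain (\ref{eqB1}) with $L=\kappa\sum_j M_j$. The only point requiring a little care — the "main obstacle," though it is mild — is step (1), making sure that the supremum defining membership in $\mathfrak B^{+}_y$ really is preserved under linear combinations (it is, by the triangle inequality after applying linearity of $\varphi(t,\cdot,y)$), and in step (2)–(3) that the constant $L$ can be chosen uniformly in $t$, which is exactly what the finite-dimensionality buys us since the bound comes from finitely many finite quantities $M_1,\dots,M_m$. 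Note the stated inequality is for fixed $y$, so no uniformity in $y$ is claimed and no compactness of $Y$ is needed.
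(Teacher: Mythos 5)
Your argument is correct, but it takes a genuinely different route from the paper's. The paper's proof observes that $\mathfrak B^{+}_{y}$ is a linear subspace of the finite-dimensional (hence complete) space $\mathfrak B$, regards $\{\varphi(t,\cdot,y)\}_{t\ge 0}$ as a family of bounded linear operators on this subspace that is pointwise bounded by the very definition of $\mathfrak B^{+}_{y}$, and then invokes the Banach--Steinhaus theorem to extract the uniform constant $L$. You instead use finite dimensionality directly: fix a basis $e_1,\dots,e_m$ of $\mathfrak B^{+}_{y}$, note that each $M_j:=\sup_{t\ge 0}|\varphi(t,e_j,y)|$ is finite, bound the coordinate functionals by $\kappa|\cdot|$ via equivalence of norms on a finite-dimensional space, and conclude by linearity with the explicit constant $L=\kappa\sum_{j}M_j$. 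Both proofs are complete, and the step you single out as the ``main obstacle'' (that $\mathfrak B^{+}_{y}$ is a subspace, preserved under linear combinations by the triangle inequality) is needed in the paper's argument as well. Your version is more elementary (no uniform boundedness principle) and produces a concrete value of $L$; the paper's version is shorter and isolates what is really needed, namely completeness of the subspace of bounded forward orbits, so it would survive in an infinite-dimensional setting whenever $\mathfrak B^{+}_{y}$ happens to be closed. The contradiction argument you sketch at the outset is, as you yourself observe, superfluous: the direct basis estimate already is the proof.
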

\begin{proof}
Since the cocycle $\varphi$ is linear and $\mathfrak B$ is finite
dimensional, then $\mathfrak B^{+}_{y}$ is a subspace of the
Banach space $\mathfrak B$. Consider the family of linear bounded
operators $\mathfrak A :=\{\varphi(t,\cdot,y)|\ t\ge 0\}$. Note
that for any $u\in \mathfrak B^{+}_{y}$ there exists a positive
number $C(u)$ (for example $C(u)=\sup\limits_{t\ge
0}|\varphi(t,u,y)|$) such that
$$
|Au|\le C(u)
$$
for any $A\in \mathfrak A$. By Banach-Steinhaus theorem the family
operators $\mathfrak A$ is bounded, i.e., there exists a positive
constant $L$ such that (\ref{eqB1}) takes place. Lemma is proved.
\end{proof}

\begin{lemma}\label{l3.1}\cite{CC_2009}
Let $\langle E,\varphi, (Y,\mathbb T,\sigma)\rangle$ be a cocycle
and $ \langle (X,\mathbb T_{+},\pi),(Y,\mathbb T,\sigma),h\rangle
$ be the nonautonomous dynamical system generated by the cocycle
$\varphi$. Assume that $x_0:=(u_0,y_0)\in X=E\times Y$ and the set
$Q_{(u_0,y_0)}^{+}:=\overline{\{\varphi(t,u_0,y_0): t\in \mathbb
T_{+}\}}$ is compact. Then the semi-hull
$H^{+}(x_0):=\overline{\{\pi(t,x_0)|\ t\in\mathbb T_{+}\}}$ is
conditionally compact.
\end{lemma}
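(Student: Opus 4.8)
The plan is to exploit the skew-product structure of the nonautonomous dynamical system generated by the cocycle. Recall that in this construction $X=E\times Y$, the dynamical system is the skew-product flow $\pi(t,(u,y))=(\varphi(t,u,y),\sigma(t,y))$, and the homomorphism $h\colon X\to Y$ is the projection $h=\mathrm{pr}_2$ onto the second factor. Hence $\pi(t,x_0)=(\varphi(t,u_0,y_0),\sigma(t,y_0))$ for every $t\in\mathbb T_{+}$, so the first coordinate of every point of the positive semi-orbit of $x_0$ belongs to the set $Q^{+}_{(u_0,y_0)}=\overline{\{\varphi(t,u_0,y_0):\ t\in\mathbb T_{+}\}}$, which is compact by hypothesis.

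First I would note the inclusion $\{\pi(t,x_0):\ t\in\mathbb T_{+}\}\subseteq Q^{+}_{(u_0,y_0)}\times Y$. Since $Q^{+}_{(u_0,y_0)}$ is compact, hence closed in $E$, the set $Q^{+}_{(u_0,y_0)}\times Y$ is closed in $X=E\times Y$; passing to the closure gives $H^{+}(x_0)\subseteq Q^{+}_{(u_0,y_0)}\times Y$. Moreover $H^{+}(x_0)$ is closed in $X$, being by definition a closure, which is the first requirement of Definition~\ref{def9.2.2}.

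It remains to verify conditional relative compactness. Let $Y'\subseteq Y$ be an arbitrary relatively compact subset. Since $h$ is the projection onto $Y$ we have $h^{-1}(Y')=E\times Y'$, and therefore
\begin{equation*}
h^{-1}(Y')\cap H^{+}(x_0)\subseteq (E\times Y')\cap\bigl(Q^{+}_{(u_0,y_0)}\times Y\bigr)=Q^{+}_{(u_0,y_0)}\times Y'\subseteq Q^{+}_{(u_0,y_0)}\times\overline{Y'}.
\end{equation*}
The right-hand side is a product of two compact sets, hence compact; thus $h^{-1}(Y')\cap H^{+}(x_0)$ is contained in a compact subset of $X$ and is consequently relatively compact. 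Together with the closedness of $H^{+}(x_0)$, this shows that $H^{+}(x_0)$ is conditionally compact.

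There is essentially no serious obstacle in this argument: the assertion is a formal consequence of the skew-product structure combined with the compactness of $Q^{+}_{(u_0,y_0)}$. The only two points demanding a little care are that a product of compact sets is compact in the (metrizable) product topology on $E\times Y$, and that a relatively compact subset of $Y$ need not be closed, which is why one passes to $\overline{Y'}$ in the final inclusion; neither is a genuine difficulty.
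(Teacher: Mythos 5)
Your proof is correct: the inclusion $H^{+}(x_0)\subseteq Q^{+}_{(u_0,y_0)}\times Y$ together with $h^{-1}(Y')=E\times Y'$ immediately gives that $h^{-1}(Y')\cap H^{+}(x_0)$ sits inside the compact set $Q^{+}_{(u_0,y_0)}\times\overline{Y'}$, and closedness of $H^{+}(x_0)$ is automatic. The paper itself gives no proof of Lemma~\ref{l3.1}, only the citation to \cite{CC_2009}; your argument is the standard one for skew-product systems and is exactly what is expected here.
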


\begin{lemma}\label{lB2} Let $\langle \mathfrak B,\psi, (Y,\mathbb
T,\sigma)\rangle$ be an affine cocycle and $\langle (X,\mathbb
T_{+},\pi),(Y,\mathbb T,\sigma),h\rangle$ be a nonautonomous
dynamical system generated by cocycle $\varphi$ ($X:=\mathfrak
B\times Y, \pi :=(\varphi,\sigma)$ and $h:=pr_{2}$). Assume that
the following conditions are fulfilled:
\begin{enumerate}
\item the Banach space $\mathfrak B$ is finite dimensional; \item
the point $y_0\in Y$ is Poisson stable in the positive direction;
\item there exits a point $u_0\in \mathfrak B$ such that $\psi
(\mathbb T_{+},u_0,y_0)$ is relatively compact.
\end{enumerate}

Then the following statement hold:
\begin{enumerate}
\item the set $K:=\omega_{x_0}\subset X=\mathfrak B\times Y$ is
conditionally compact, where $x_0:=(u_0,y_0)$; \item
$M:=\overline{co}K_{y_0}$ is a compact convex subset of $X_{y_0}:=
\mathfrak B\times \{y_0\}$, where $K_{y_0}:=\omega_{x_0}\bigcap
X_{y_0}$; \item $\mathcal{E}^{+}_{y_0}$ is a compact sub-semigroup
of the semi-group $M^{M}$; \item every $\xi\in
\mathcal{E}^{+}_{y_0}$ is affine and continuous.
\end{enumerate}
\end{lemma}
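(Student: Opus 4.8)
The idea is to verify the four assertions one after another, reducing everything to the tools already assembled: Lemma \ref{l3.1} (conditional compactness of semi-hulls), Lemma \ref{lB1} (uniform boundedness of bounded semitrajectories of the linear cocycle $\varphi$), Lemma \ref{l9.2.4} (the semigroup $\mathcal E_{y_0}^{+}$), and the affine/linear relation $\psi(t,u,y)-\psi(t,v,y)=\varphi(t,u-v,y)$. First I would show (i). Since $\psi(\mathbb T_{+},u_0,y_0)$ is relatively compact and $y_0$ is positively Poisson stable, the semitrajectory $\{\pi(t,x_0):t\in\mathbb T_{+}\}$ is contained in the (compact) product of $\overline{\psi(\mathbb T_{+},u_0,y_0)}$ with $H^{+}(y_0)$, so it is precompact; hence $\omega_{x_0}$ is a nonempty compact set. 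Conditional compactness of $K=\omega_{x_0}$ then follows because $\omega_{x_0}\subseteq H^{+}(x_0)$, which is conditionally compact by Lemma \ref{l3.1}, and a closed subset of a conditionally compact set is conditionally compact; moreover $h(K)=\omega_{y_0}\ni y_0$ so the fibre $K_{y_0}$ is nonempty.

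Next, (ii). The fibre $K_{y_0}=\omega_{x_0}\cap X_{y_0}$ is a nonempty compact subset of $X_{y_0}\cong\mathfrak B$; since $\mathfrak B$ is finite dimensional, its closed convex hull $M:=\overline{\mathrm{co}}\,K_{y_0}$ is compact (closed convex hull of a compact set in $\mathbb R^n$ or $\mathbb C^n$ is compact — Carathéodory), and convex by construction. The point needing a word is that $M$ lies inside $\mathfrak B\times\{y_0\}$, which is immediate since $K_{y_0}$ already does and the convex structure is taken in the vector space $X_{y_0}$.

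For (iii): apply Lemma \ref{l9.2.4}. The hypotheses of that lemma require $y_0$ positively Poisson stable (condition 2) and conditional compactness of the phase space along the relevant fibres, which is exactly what part (i) gives for $\omega_{x_0}$; so $\mathcal E_{y_0}^{+}$ is a nonempty compact subsemigroup of $(K_{y_0})^{K_{y_0}}$. The remaining task is to see that each $\xi\in\mathcal E_{y_0}^{+}$ maps $M$ into $M$ and that $\mathcal E_{y_0}^{+}$ is a subsemigroup of $M^{M}$: an element $\xi$ is a pointwise limit $\pi^{t_n}|_{X_{y_0}}\to\xi$ with $\{t_n\}\in\mathfrak N_{y_0}^{+\infty}$; since $\pi^{t_n}$ maps $\omega_{x_0}$ into $\omega_{x_0}$ (invariance of $\omega$-limit sets) and $\sigma^{t_n}y_0\to y_0$, the limit $\xi$ maps $K_{y_0}$ into $K_{y_0}$, hence (being affine, by (iv) below) extends/restricts to a continuous affine self-map of $M$; closure and composition then stay inside $M^{M}$.

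Finally (iv), which I expect to be the crux. Each $\xi\in\mathcal E_{y_0}^{+}$ is a pointwise limit of the maps $\pi^{t_n}|_{X_{y_0}}$, i.e.\ of $u\mapsto\psi(t_n,u,y_0)$. Each of these is affine in $u$ by property 4.\ of $\psi$, namely $\psi(t_n,u,y_0)-\psi(t_n,v,y_0)=\varphi(t_n,u-v,y_0)$; passing to the pointwise limit preserves the affine relation, so $\xi(u)-\xi(v)=\lim_n\varphi(t_n,u-v,y_0)=:L_{\xi}(u-v)$ exists and $L_{\xi}$ is linear, whence $\xi$ is affine on $K_{y_0}$ and extends uniquely to an affine map on $M=\overline{\mathrm{co}}\,K_{y_0}$. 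Continuity of $\xi$ on the compact finite-dimensional set $M$ follows from the affine (hence polynomial, hence continuous) form together with local boundedness: the linear parts $\varphi(t_n,\cdot,y_0)$ are uniformly bounded on the subspace of bounded trajectories by Lemma \ref{lB1} (here one uses that differences of points of $K_{y_0}$ lie in $\mathfrak B_{y_0}^{+}$, since both have bounded forward orbits), so the limit linear map $L_{\xi}$ is bounded and $\xi$ is continuous. The main obstacle is precisely this last point — ensuring the limiting maps $\xi$ are genuinely continuous affine self-maps of the compact convex set $M$ rather than merely pointwise limits on the fibre — and it is handled by combining the uniform bound from Lemma \ref{lB1} with the affine algebra above and the uniqueness of affine extension from $K_{y_0}$ to its closed convex hull.
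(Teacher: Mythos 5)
Your proposal is correct and follows essentially the same route as the paper's proof: statement (i) via Lemma \ref{l3.1}, (ii) from compactness of $K_{y_0}$ in the finite-dimensional fibre, (iii) via Lemma \ref{l9.2.4}, and (iv) by passing to pointwise limits of the affine maps $\psi(t_n,\cdot,y_0)$ together with the uniform Lipschitz bound from Lemma \ref{lB1}. You are in fact slightly more careful than the paper on two points it leaves implicit --- why each $\xi$ maps $M$ into $M$, and why differences of points of $M$ lie in $\mathfrak B^{+}_{y_0}$ so that Lemma \ref{lB1} applies --- but the underlying argument is the same.
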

\begin{proof}
The first statement follows from Lemma \ref{l3.1}.

Since the set $K$ is conditionally compact, then the set $K_{y_0}$
is compact and, consequently, the set $M=\overline{co}K_{y_0}$ is
also compact.

The third statement follows from Lemma \ref{l9.2.4}.

Let $m$ be an arbitrary natural number,
$\alpha_1,\alpha_2,\ldots,\alpha_{m}\in \mathbb R_{+}$ with
\begin{equation}\label{eqD6}
\sum_{k=1}^{m}\alpha_{k}=1 \nonumber
\end{equation}
and $x_1,x_2,\ldots,x_{m}\in M$ ($x_i=(u_i,y_0)$
$i=1,2,\ldots,m$). Since the maps from $\{\varphi(t,\cdot,y_0)|\
t\ge 0\}$  are affine, then we have
\begin{equation}\label{eqD7}
\psi(t,\sum_{k=1}^{m}\alpha_{k}u_k)=\sum_{k=1}^{m}\alpha_{k}\psi(t,u_k,y_0).
\end{equation}
for any $t\ge 0$. Let now $\xi\in \mathcal{E}^{+}_{y_0}$, then
there exists a sequence $\{t_{n}\}\in \mathfrak N_{y_0}^{+\infty}$
such that
\begin{equation}\label{eqD7.1}
\lim\limits_{n\to \infty} \pi^{t_n}(x)=\xi(x)
\end{equation}
for any $x\in M$. From (\ref{eqD7}) we get
\begin{equation}\label{eqD7.i}
\psi(t_n,\sum_{k=1}^{m}\alpha_{k}u_k,y_0)=\sum_{k=1}^{m}\alpha_{k}\psi(t_n,u_k,y_0)
\end{equation}
for any $n\in \mathbb N$. Passing to the limit in (\ref{eqD7.i})
and taking in consideration (\ref{eqD7.1}) we obtain
\begin{equation}\label{eqD8}
\xi(\sum_{k=1}^{m}\alpha_{k}x_k)=\sum_{k=1}^{m}\alpha_{k}\xi(x_k)
\nonumber.
\end{equation}
Thus the map $\xi$ is affine.

Let $x\in M$ and $\xi \in \mathcal{E}^{+}_{y_0}$, then $x=(u,y_0)$
and there exists a sequence $\{t_n\}\in \mathfrak
N_{y_0}^{+\infty}$ such that $\xi(x)=\lim\limits_{n\to
\infty}(\psi(t_n,u,y_0),\sigma(t_n,y_0))=(\nu(u),y_0)$, where
\begin{equation}\label{eqC0}
\nu(u)=\lim\limits_{n\to \infty}\psi(t_n,u,y_0)
\end{equation}
for any $(u,y_0)\in M$. By Lemma \ref{lB1} there exists a positive
constant $L$ such that
\begin{equation}\label{eqC0.1}
|\psi(t_n,u_1,y_0)-\psi(t_n,u_2,y_0)|\le L|u_1-u_2|
\end{equation}
for any $(u_1,y_0),(u_2,y_0)\in M$ and $n\in \mathbb N$. Passing
to the limit in (\ref{eqC0.1}) as $n\to \infty$ and taking in
consideration (\ref{eqC0}) we obtain
\begin{equation}\label{eqC1}
\rho(\xi(x_1),\xi(x_2))\le L\rho(x_1,x_2)\nonumber
\end{equation}
for every map $\xi\in \mathcal{E}^{+}_{y_0}$ because
$\xi(x)=(\nu(u),y_0)$ for any $x=(u,y_0)\in M$ and
$\rho(x_1,x_2)=|u_1-u_2|$ ($x_i=(u_i,y_0), i=1,2$). Lemma is
completely proved.
\end{proof}

\begin{theorem}\label{thB2} Let $\langle \mathfrak B,\psi, (Y,\mathbb
T,\sigma)\rangle$ be an affine cocycle and $\langle (X,\mathbb
T_{+},\pi),(Y,\mathbb T,\sigma),h\rangle$ be a nonautonomous
dynamical system generated by cocycle $\varphi$ ($X:=\mathfrak
B\times Y, \pi :=(\varphi,\sigma)$ and $h:=pr_{2}$). Assume that
the following conditions are fulfilled:
\begin{enumerate}
\item the Banach space $\mathfrak B$ is finite dimensional; \item
the point $y_0\in Y$ is Poisson stable; \item there exits a point
$u_0\in \mathfrak B$ such that $\psi (\mathbb T_{+},u_0,y_0)$ is
relatively compact.
\end{enumerate}

Then there exists at least one point $\bar{u}\in \mathfrak B$ such
that $\mathfrak N_{y_0}^{+\infty}\subseteq \mathfrak
N_{\bar{x}}^{+\infty}$, i.e., the point $\bar{x}$ is comparable
with the point $y_0$.
\end{theorem}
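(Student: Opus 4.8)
The plan is to reduce the statement to the common fixed point theorem for noncommutative affine semigroups (Theorem~\ref{th12.3.8*}), applied to the semigroup $\mathcal E^{+}_{y_0}$ acting on a compact convex subset of the fibre over $y_0$, and then to transfer the resulting fixed point property into the inclusion $\mathfrak N^{+\infty}_{y_0}\subseteq\mathfrak N^{+\infty}_{\bar x}$.

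First I would apply Lemma~\ref{lB2} (Poisson stability of $y_0$ implies positive Poisson stability): with $x_0:=(u_0,y_0)$ and $K:=\omega_{x_0}$, the set $K$ is conditionally compact, $M:=\overline{co}\,K_{y_0}$ is a compact convex subset of $X_{y_0}=\mathfrak B\times\{y_0\}$, the set $\mathcal E^{+}_{y_0}$ is a compact sub-semigroup of $M^{M}$, and every $\xi\in\mathcal E^{+}_{y_0}$ is affine and continuous. Identifying $X_{y_0}$ with the finite-dimensional Banach space $\mathfrak B$ via $pr_1$, $M$ becomes a compact convex subset of $\mathfrak B$, and by item~3 of Remark~\ref{remE1} each $\xi\in\mathcal E^{+}_{y_0}$ is quasi-affine. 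Hence Theorem~\ref{th12.3.8*}, applied with $E:=\mathcal E^{+}_{y_0}$, yields a common fixed point $\bar x=(\bar u,y_0)\in M$, i.e. $\xi(\bar x)=\bar x$ for every $\xi\in\mathcal E^{+}_{y_0}$.

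It remains to deduce that $\{t_n\}\in\mathfrak N^{+\infty}_{y_0}$ forces $\pi(t_n,\bar x)\to\bar x$. I would first check that the semi-trajectory $\{\psi(t,\bar u,y_0):\ t\ge 0\}$ is bounded, hence relatively compact: for $v\in K_{y_0}$ choose $\{s_n\}\in\mathfrak N^{+\infty}_{y_0}$ with $\psi(s_n,u_0,y_0)\to v$; the cocycle identity and continuity of $\psi$ give $\psi(t,v,y_0)=\lim_n\psi(t+s_n,u_0,y_0)$, so $\sup_{t\ge 0}|\psi(t,v,y_0)|$ is dominated, uniformly in $v\in K_{y_0}$, by the bound of the relatively compact set $\psi(\mathbb T_{+},u_0,y_0)$; since $\psi(t,\cdot,y_0)$ is affine and continuous this uniform bound passes to $\overline{co}\,K_{y_0}\ni\bar u$, and Lemma~\ref{l3.1} then makes $H^{+}(\bar x)$ conditionally compact. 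Now fix $\{t_n\}\in\mathfrak N^{+\infty}_{y_0}$; since $\sigma(t_n,y_0)\to y_0$ and the first component of $\{\pi(t_n,\bar x)\}$ is bounded, the sequence is precompact. If it did not converge to $\bar x$, pass to a subsequence with $\pi(t_{n_k},\bar x)\to p\in X_{y_0}$, $p\ne\bar x$; using conditional compactness of $\omega_{x_0}$ together with the affinity of $\psi(t_{n_k},\cdot,y_0)$ on $M=\overline{co}\,K_{y_0}$, the maps $\pi^{t_{n_k}}|_{M}$ take values in a fixed compact set, so along a further subsequence $\pi^{t_{n_k}}|_{M}\to\xi$ pointwise, where invariance and closedness of $\omega_{x_0}$ give $\xi(K_{y_0})\subseteq K_{y_0}$, affinity gives $\xi(M)\subseteq M$, and Lemma~\ref{lB1} gives that $\xi$ is Lipschitz; thus $\xi\in\mathcal E^{+}_{y_0}$ and $p=\xi(\bar x)=\bar x$, a contradiction. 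Hence $\pi(t_n,\bar x)\to\bar x$, i.e. $\mathfrak N^{+\infty}_{y_0}\subseteq\mathfrak N^{+\infty}_{\bar x}$, and by Theorem~\ref{thPC3} the point $\bar x$ is comparable with $y_0$ by the character of recurrence.

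I expect the main obstacle to be this last transfer step: the fixed point $\bar x$ produced by Theorem~\ref{th12.3.8*} a priori lives only in the auxiliary set $M=\overline{co}\,K_{y_0}$, so one must first recover dynamical information about it (conditional precompactness of its semi-trajectory, via the boundedness computation) and then verify that every subsequential pointwise limit of $\pi^{t_{n_k}}|_{M}$ is again an element of $\mathcal E^{+}_{y_0}$, which is precisely what allows the common fixed point property to be used. All of the needed compactness is ultimately supplied by the conditional compactness of $\omega_{x_0}$ in Lemma~\ref{lB2}.
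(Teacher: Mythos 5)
Your proposal is correct and follows essentially the same route as the paper: Lemma~\ref{lB2} to set up the compact convex set $M=\overline{co}\,K_{y_0}$ and the compact affine semigroup $\mathcal E^{+}_{y_0}$, Theorem~\ref{th12.3.8*} to produce the common fixed point $\bar x$, and then the observation that every subsequential limit of $\{\pi(t_n,\bar x)\}$ arises from some $\xi\in\mathcal E^{+}_{y_0}$ and hence equals $\xi(\bar x)=\bar x$. Your extra step justifying the conditional precompactness of $\Sigma_{\bar x}$ (since $\bar u$ lies only in $\overline{co}\,K_{y_0}$) is a detail the paper's proof asserts without argument, so it is a welcome addition rather than a divergence.
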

\begin{proof}
Let $x_0:=(u_0,y_0)$ and $K:=\omega_{x_0}$, then by Lemma
\ref{lB2} we have
\begin{enumerate}
\item the set $K:=\omega_{x_0}\subset X=\mathfrak B\times Y$ is
conditionally compact, where $x_0:=(u_0,y_0)$; \item
$M:=\overline{co}K_{y_0}$ is a compact convex subset of $X_{y_0}:=
\mathfrak B\times \{y_0\}$, where $K_{y_0}:=\omega_{x_0}\bigcap
X_{y_0}$; \item $\mathcal{E}^{+}_{y_0}$ is a compact sub-semigroup
of the semi-group $M^{M}$; \item every $\xi\in
\mathcal{E}^{+}_{y_0}$ is affine and continuous.
\end{enumerate}
According to Theorem \ref{th12.3.8*} there exists at least one
point $\bar{x}=(\bar{u},y_0)\in M$ such that
$\xi(\bar{x})=\bar{x}$ for any $\xi\in \mathcal{E}^{+}_{y_0}$. Now
we show that the point $\bar{x}$ is comparable by character of
recurrence with the point $y_0$, i.e., $\mathfrak
N_{y_0}^{+\infty}\subseteq \mathfrak N_{\bar{x}}^{+\infty}$. In
fact. Let $\{t_n\}\in \mathfrak N_{y_0}^{+\infty}$, then
$\sigma(t_n,y_0)\to y_0$ as $n\to \infty$. Since
$\Sigma_{\bar{x}}$ is conditionally precompact and
$\{\pi(t_n,\bar{x})\}=\Sigma_{\bar{x}}\bigcap
h^{-1}(\{\sigma(t_n,y_0)\})$, then $\{\pi(t_n,\bar{x})\}$ is a
precompact sequence. To show that $\{t_n\}\in\mathfrak
N_{\bar{x}}^{+\infty}$ it is sufficient to prove that the sequence
$\{\pi(t_n,\bar{x})\}$ has at most one limiting point. Let $p_{i}$
($i=1,2$) be two limiting points of $\{\pi(t_n,\bar{x})\}$, then
there are $\{t_{k^{i}_{n}}\}\subseteq \{t_n\}$ such that
$p_{i}:=\lim\limits_{n\to \infty}\pi(t_{k^{i}_{n}},x_0)$
($i=1,2$). Notice that the set
$$
Q:= \overline {\bigcup \{\pi ^{t_{n}}(X_{y})| n \in \mathbb N \}}
$$
is compact, because $X$ is conditionally compact. Thus $\{\pi
^{t_{n}}|_{X_{y}}\} \subseteq Q^{X_{y}}$ and according to
Tykhonov's theorem this sequence is relatively compact and,
consequently, without loss of generality we can suppose that the
subsequences $\{\pi^{t_{k^{i}_{n}}}\}\subset \{\pi^{t_n}\}$
($i=1,2$) are convergent. Denote by $\xi^{i}=\lim\limits_{n\to
\infty}\pi^{t_{k^{i}_{n}}}$, then $\xi^{i}\in
\mathcal{E}^{+}_{y_0}$ ($i=1,2$) and
$p^{i}=\xi^{i}(\bar{x})=\bar{x}$. Thus we have
$p^{1}=\bar{x}=p^{2}$. Theorem is completely proved.
\end{proof}

\begin{coro}\label{corB1} Let $\langle \mathfrak B,\psi, (Y,\mathbb
T,\sigma)\rangle$ be an affine cocycle. Under the conditions of
Theorem \ref{thB2} if the point $y_0\in Y$ is $\tau$-periodic
(respectively, Bohr almost periodic, almost automorphic, recurrent
in the sense of Birkhoff, Levitan almost periodic, almost
recurrent in the sense of Bebutov, Poisson stable), then there
exists at least one point $\bar{x}\in X:=\mathfrak B\times Y$ such
that $\bar{x}$ has the same character of recurrence as $y_0$,
i.e., $\bar{x}$ is $\tau$-periodic (respectively, almost
automorphic, recurrent in the sense of Birkhoff, Levitan almost
periodic, almost recurrent in the sense of Bebutov, Poisson
stable).
\end{coro}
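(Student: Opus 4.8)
Corollary \ref{corB1} should follow by combining Theorem \ref{thB2} with the general comparability machinery (Theorems \ref{thPC3} and \ref{thPC4}) recorded in Section \ref{Sec2}. The point $\bar x = (\bar u, y_0)$ produced by Theorem \ref{thB2} already satisfies $\mathfrak N_{y_0}^{+\infty} \subseteq \mathfrak N_{\bar x}^{+\infty}$; the remaining work is to upgrade this one-sided condition into the full statement ``$\bar x$ is comparable by character of recurrence with $y_0$'' and then to transfer each named recurrence property from $y_0$ to $\bar x$.

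Here is how I would organize it. First I would note that $\bar x$ is conditionally Lagrange stable (indeed $\bar x \in M \subseteq K = \omega_{x_0}$, which is conditionally compact by Lemma \ref{lB2}), so $\Sigma_{\bar x}$ is conditionally precompact. Second, I would invoke Theorem \ref{thPC3}: since $y_0$ is Poisson stable in the positive direction (each of the listed classes — $\tau$-periodic, Bohr almost periodic, almost automorphic, Birkhoff recurrent, Levitan almost periodic, almost recurrent in the sense of Bebutov, Poisson stable — is in particular positively Poisson stable), the inclusion $\mathfrak N_{y_0}^{+\infty} \subseteq \mathfrak N_{\bar x}^{+\infty}$ from Theorem \ref{thB2} is equivalent to $\bar x$ being comparable with $y_0$ by the character of recurrence. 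Third, I would apply Theorem \ref{thPC4}, which says precisely that a point comparable with $y_0$ inherits from $y_0$ the properties of being stationary, $\tau$-periodic, Levitan almost periodic, almost recurrent in the sense of Bebutov, or Poisson stable. For the two remaining cases — almost automorphic and Birkhoff recurrent — I would combine the Levitan-almost-periodic (respectively, almost-recurrent) conclusion of Theorem \ref{thPC4} with the conditional Lagrange stability of $\bar x$ and the fact that $y_0$ has a compact hull, using the definitions in Section \ref{Sec2}: an almost automorphic point is a Lagrange stable Levitan almost periodic point, and a Birkhoff recurrent point is an almost recurrent point with precompact trajectory. Since $\Sigma_{y_0}$ is compact and $\Sigma_{\bar x}$ is conditionally precompact lying over it, $\Sigma_{\bar x} = \Sigma_{\bar x} \cap h^{-1}(\Sigma_{y_0})$ is precompact, which supplies the missing Lagrange-stability ingredient.

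The only genuinely delicate point is matching the hypothesis of Theorem \ref{thPC3} (which is stated for $y$ positively Poisson stable and concerns $\mathfrak N_y^{+\infty}$, $\mathfrak N_x^{+\infty}$) with the output of Theorem \ref{thB2}, and then making sure that ``comparable by character of recurrence'' as used in Theorem \ref{thPC4} is the same notion; both are Shcherbakov's notion, so this is just bookkeeping. I would therefore present the proof as: (1) verify $\bar x$ is conditionally Lagrange stable and $y_0$ is positively Poisson stable; (2) apply Theorem \ref{thB2} to get $\mathfrak N_{y_0}^{+\infty} \subseteq \mathfrak N_{\bar x}^{+\infty}$; (3) apply Theorem \ref{thPC3} to conclude $\bar x$ is comparable with $y_0$; (4) apply Theorem \ref{thPC4} for the five ``character'' classes; (5) handle almost automorphy and Birkhoff recurrence by adjoining the precompactness of $\Sigma_{\bar x}$ established in step (1). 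No hard estimates are needed — everything has been prepared in Sections \ref{Sec2} and \ref{Sec4}.
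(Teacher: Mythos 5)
Your proposal is correct and follows essentially the same route as the paper, whose proof of Corollary \ref{corB1} is simply the citation of Theorems \ref{thB2} and \ref{thPC4}. You merely make explicit the bookkeeping the paper leaves implicit — the passage through Theorem \ref{thPC3} and the extra Lagrange-stability argument for the almost automorphic and Birkhoff recurrent cases, which the paper itself spells out only later in the proofs of Corollaries \ref{corB2.1*} and \ref{corB2.1}.
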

\begin{proof} This statement follows from Theorems \ref{thB2} and
\ref{thPC4}.
\end{proof}

\section{Applications}\label{Sec5}

\subsection{Linear Differential Equations}\label{Ssec5.1}

\begin{example}\label{exNH1}  Let $Y$ be a complete metric space,
$(Y,\mathbb R,\sigma)$ be a dynamical system on $Y$ and $
[\mathfrak B]$ be the space of linear bounded operators acting
into Banach space $ \mathfrak B $ equipped with the operator norm
and $f\in C(Y,\mathfrak B)$. Consider the following linear
nonhomogeneous differential equation
\begin{equation}\label{eqNH1}
x'=A(\sigma(t,y))x +f(\sigma(t,y)),\  \ (y\in Y)
\end{equation}
where $A\in C(Y,[\mathfrak B])$.

Equation (\ref{eqLS01.81}) generates a linear cocycle $\langle
\mathfrak B, \varphi, (Y,\mathbb R, \sigma)\rangle$ over dynamical
system $(Y,\mathbb R,\sigma)$ with the fibre $\mathfrak B$.
According to Remark \ref{remNH1} by equality (\ref{eqLS5.8}) is
defined a linear nonhomogeneous cocycle $\langle B,\psi,(Y,\mathbb
R,\sigma)\rangle$ over dynamical system $(Y,\mathbb R,\sigma)$
with the fibre $\mathfrak B$. Thus every nonhomogeneous linear
differential equations (\ref{eqNH1}) generates a linear
nonhomogeneous cocycle $\psi$.
\end{example}

\begin{example}\label{exLS02.8}{\rm
Consider a linear nonhomogeneous differential equation
\begin{equation}\label{eqNH2}
x'=A(t)x +f(t),
\end{equation}
where $f\in C(\mathbb R,\mathfrak B)$ and $A\in C(\mathbb
R,[\mathfrak B])$. Along this equation (\ref{eqNH2}) consider its
$H$-class, i.e., the following family of equations
\begin{equation}\label{eqNH3}
x'=B(t)x +g(t),
\end{equation}
where $(B,g)\in H(A,f)$. Notice that the following conditions are
fulfilled for equation (\ref{eqLS02..8}) and its $H$-class
(\ref{eqLS03}):
\begin{enumerate}
\item[a.] for any $ u \in \mathfrak B $ and $ B \in H(A) $
equation (\ref{eqLS03}) has exactly one solution $ \varphi
(t,u,B)$ defined on $\mathbb R$ and the condition $ \varphi (0, u,
B ) = v$ is fulfilled; \item[b.] the mapping $ \varphi : (t,u,B )
\to \varphi (t,u,B ) $ is continuous in the topology of $\mathbb R
\times \mathfrak B \times C(\mathbb R ; \Lambda )$.
\end{enumerate}

Denote by $(H(A,f),\mathbb R,\sigma)$ the shift dynamical system
on $H(A,f)$. Under the above assumptions the equation
(\ref{eqLS02..8}) generates a linear cocycle $\langle \mathfrak B,
\varphi, (H(A,f),\mathbb R, \sigma)\rangle$ over dynamical system
$(H(A,f),\mathbb R,\sigma)$ with the fibre $\mathfrak B$. Denote
by $\psi$ a mapping from $\mathbb R_{+}\times \mathfrak B\times
H(A,f)$ into $\mathfrak B$ defined by equality
\begin{equation}\label{eqNH4}
\psi(t,u,(B,g)):=U(t,B)u+\int_{0}^{t}U(t-\tau,B_{\tau})g(\tau)d\tau,\nonumber
\end{equation}
then $\psi$ possesses the following properties:
\begin{enumerate}
\item[(i)] $\psi(0,u,(B,g))=u$ for any $(u,(B,g))\in \mathfrak
B\times H(A,f)$; \item[(ii)]
$\psi(t+\tau,u,(B,g))=\psi(t,\psi(\tau,u,(B,g)),(B_{\tau},g_{\tau}))$
for any $t,\tau\in \mathbb R$ and $(u,(B,g))\in \mathfrak B\times
H(A,f)$; \item[(iii)] the mapping $\psi :\mathbb R\times \mathfrak
B\times H(A,f)\mapsto \mathfrak B$ is continuous; \item[(iv)]
$\psi (t,\lambda u+\mu v,(B,g))=\lambda \psi(t,u,(B,g))+\mu
\psi(t,v,(B,g))$ for any $t\in\mathbb R$, $u,v\in \mathfrak B$,
$(B,g)\in H(A,f)$ and $\lambda,\mu\in \mathbb R$ (or $\mathbb C$)
with the condition $\lambda +\mu =1$, i.e., the mapping
$\psi(t,\cdot,(B,g)):\mathfrak B\mapsto \mathfrak B$ is affine for
every $(t,(B,g))\in \mathbb R\times H(A,f)$.
\end{enumerate}

Thus, every linear nonhomogeneous differential equation of the
form (\ref{eqNH2}) (and its $H$-class (\ref{eqNH3})) generates a
linear nonhomogeneous cocycle $\langle \mathfrak B, \psi,
(H(A,f),\mathbb R,\sigma)\rangle$ over dynamical system
$(H(A,f),\mathbb R,\sigma)$ with the fibre $\mathfrak B$.}
\end{example}

\begin{theorem}\label{thB2.1} Assume that the
following conditions are fulfilled:
\begin{enumerate}
\item $\mathfrak B$ is a finite dimensional Banach space; \item
the function $f\in C(\mathbb R,\mathfrak B)$ and matrix-function
$A\in C(\mathbb R,[\mathfrak B])$ are jointly Poisson stable;
\item there exits a bounded on $\mathbb R_{+}$ solution
$\psi(t,u_0,A,f)$ of equation (\ref{eqNH2}).
\end{enumerate}

Then there exists at least one compatible solution
$\psi(t,\bar{u},A,f)$ of equation (\ref{eqNH2}), i.e., $\mathfrak
N_{(A,f)}^{+\infty}\subseteq \mathfrak N_{\bar{\psi}}^{+\infty}$,
where $\bar{\psi}:=\psi(\cdot,\bar{u},A,f)$.
\end{theorem}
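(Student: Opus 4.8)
The plan is to reduce Theorem~\ref{thB2.1} to the abstract cocycle result Theorem~\ref{thB2} by choosing the right base dynamical system. First I would set $Y:=H(A,f)=\overline{\{(A^h,f^h)\mid h\in\mathbb R\}}$ with the shift flow $(Y,\mathbb R,\sigma)$, and let $y_0:=(A,f)\in Y$. Hypothesis (2), that $A$ and $f$ are jointly Poisson stable, says precisely that there is a sequence $t_n\to+\infty$ with $(A^{t_n},f^{t_n})\to(A,f)$ in $C(\mathbb R,[\mathfrak B])\times C(\mathbb R,\mathfrak B)$, i.e.\ $\sigma(t_n,y_0)\to y_0$; hence $y_0$ is positively Poisson stable, which is condition (2) of Theorem~\ref{thB2}. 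By Example~\ref{exLS02.8}, the $H$-class of~(\ref{eqNH2}) generates a linear nonhomogeneous (affine) cocycle $\langle\mathfrak B,\psi,(Y,\mathbb R,\sigma)\rangle$ with underlying linear cocycle $\varphi$, so the associated nonautonomous dynamical system $\langle(X,\mathbb R_+,\pi),(Y,\mathbb R,\sigma),h\rangle$ with $X=\mathfrak B\times Y$, $\pi=(\psi,\sigma)$, $h=pr_2$ is in place. Condition (1) of Theorem~\ref{thB2} is hypothesis (1). For condition (3) I would take $u_0$ from hypothesis (3): the solution $\psi(t,u_0,A,f)$ is bounded on $\mathbb R_+$, and since $\mathfrak B$ is finite dimensional, a bounded subset of $\mathfrak B$ has compact closure, so $\psi(\mathbb R_+,u_0,y_0)$ is relatively compact.

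With all three hypotheses of Theorem~\ref{thB2} verified, that theorem produces a point $\bar u\in\mathfrak B$ (equivalently $\bar x=(\bar u,y_0)\in X$) with $\mathfrak N_{y_0}^{+\infty}\subseteq\mathfrak N_{\bar x}^{+\infty}$. It then remains only to translate this back into a statement about the solution $\psi(t,\bar u,A,f)$ of~(\ref{eqNH2}). Writing $\bar\psi:=\psi(\cdot,\bar u,A,f)$, the trajectory of $\bar x$ in $X$ projects via the first coordinate onto the translates of $\bar\psi$, and $\pi(t_n,\bar x)\to\bar x$ forces both $\sigma(t_n,y_0)\to y_0$ (already known) and $\psi(t_n,\bar u,y_0)\to\bar u$ in $\mathfrak B$; by the cocycle identity and continuity this upgrades to $\bar\psi^{t_n}\to\bar\psi$ uniformly on compacts, i.e.\ $\mathfrak N_{y_0}^{+\infty}\subseteq\mathfrak N_{\bar\psi}^{+\infty}$. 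This is exactly the asserted compatibility of the solution with $(A,f)$.

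The genuinely substantive content is of course all packed into Theorem~\ref{thB2} (and behind it the fixed-point Theorem~\ref{th12.3.8*} and the semigroup Lemma~\ref{l9.2.4}), which I am allowed to invoke; at the level of Theorem~\ref{thB2.1} the only real points to check are (a) that joint Poisson stability of $(A,f)$ is the same thing as positive Poisson stability of the point $y_0=(A,f)$ in the Bebutov/shift system $H(A,f)$, and (b) that finite-dimensionality converts "bounded on $\mathbb R_+$" into "relatively compact orbit", which is needed to apply the compactness hypothesis. The main obstacle, if any, is purely bookkeeping: making sure the cocycle $\psi$ of Example~\ref{exLS02.8} is indeed the affine cocycle to which Theorem~\ref{thB2} applies, and that the point $\bar x$ it returns genuinely corresponds to a solution of the original equation~(\ref{eqNH2}) — i.e.\ that $\psi(t,\bar u,(A,f))$, read off the fibre over $y_0$, solves $x'=A(t)x+f(t)$ — which follows since $\sigma(t,y_0)$ restricted to the orbit of $y_0$ reproduces the shifts of $(A,f)$ and $\psi$ was built from exactly that data.
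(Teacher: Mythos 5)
Your proposal is correct and follows essentially the same route as the paper: the paper's proof simply invokes Example~\ref{exLS02.8} to realize equation~(\ref{eqNH2}) as an affine cocycle over the shift dynamical system on $H(A,f)$ and then applies Theorem~\ref{thB2}. Your version merely spells out the hypothesis-checking (positive Poisson stability of $y_0=(A,f)$, and boundedness implying relative compactness in finite dimensions) that the paper leaves implicit.
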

\begin{proof} Let $\langle \mathfrak B, \psi, (H(A,f),\mathbb R,\sigma)
\rangle$ be a cocycle generated by equation (\ref{eqNH2}). By
Example \ref{exLS02.8} the cocycle $\psi$ is affine. Now applying
Theorem \ref{thB2} to constructed cocycle $\psi$ we complete the
proof of Theorem.
\end{proof}

\begin{coro}\label{corB2.1*} Under the conditions of Theorem
\ref{thB2.1} if the function $(A,f)\in C(\mathbb R,[\mathfrak
B])\times C(\mathbb R,\mathfrak B)$ is $\tau$-periodic
(respectively, Bohr almost periodic, almost automorphic, Levitan
almost periodic, almost recurrent in the sense of Bebutov,
recurrent in the sense of Birkhoff, Poisson stable), then equation
(\ref{eqNH2}) has at least one $\tau$-periodic (respectively,
almost automorphic, Levitan almost periodic, almost recurrent in
the sense of Bebutov, recurrent in the sense of Birkhoff, Poisson
stable) solution.
\end{coro}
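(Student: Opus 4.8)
The plan is to deduce Corollary \ref{corB2.1*} from Theorem \ref{thB2.1} together with the transfer principle for the character of recurrence (Theorems \ref{thPC3} and \ref{thPC4}), exactly in the spirit of the proof of Corollary \ref{corB1}. First I would form the linear nonhomogeneous cocycle $\langle \mathfrak B, \psi, (H(A,f),\mathbb R,\sigma)\rangle$ generated by equation (\ref{eqNH2}) as in Example \ref{exLS02.8}, and set $y_0:=(A,f)$ regarded as a point of the hull $H(A,f)$ under the Bebutov shift. Each of the recurrence properties listed in the hypothesis ($\tau$-periodic, Bohr almost periodic, almost automorphic, Levitan almost periodic, almost recurrent in the sense of Bebutov, recurrent in the sense of Birkhoff, Poisson stable) forces $(A,f)$ to be jointly Poisson stable, so hypothesis (2) of Theorem \ref{thB2.1} is met; hypotheses (1) and (3) are part of the standing assumptions ``under the conditions of Theorem \ref{thB2.1}''. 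Theorem \ref{thB2.1} then produces a solution $\bar\psi:=\psi(\cdot,\bar u,A,f)$ of (\ref{eqNH2}) with $\mathfrak N_{(A,f)}^{+\infty}\subseteq \mathfrak N_{\bar\psi}^{+\infty}$.

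Next I would work in the nonautonomous dynamical system $\langle (X,\mathbb R_{+},\pi),(H(A,f),\mathbb R,\sigma),h\rangle$ with $X=\mathfrak B\times H(A,f)$, $h=pr_2$, and put $\bar x:=(\bar u,y_0)$. Since $y_0$ is Poisson stable in the positive direction and $\mathfrak N_{y_0}^{+\infty}\subseteq \mathfrak N_{\bar x}^{+\infty}$, Theorem \ref{thPC3} shows that $\bar x$ is comparable with $y_0$ by the character of recurrence. Applying Theorem \ref{thPC4}: if $y_0$ is $\tau$-periodic, Levitan almost periodic, almost recurrent in the sense of Bebutov, or Poisson stable, then $\bar x$ has the same property, and translating back to the function picture, $\bar\psi$ is a solution of (\ref{eqNH2}) with that same character of recurrence.

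It remains to treat the cases in which $(A,f)$ is Bohr almost periodic, almost automorphic, or recurrent in the sense of Birkhoff; here I would use the additional fact that the hull $H(A,f)$ is compact, so the conditionally precompact trajectory $\Sigma_{\bar x}$ is in fact precompact, i.e.\ $\bar x$ is Lagrange stable. In all three cases $y_0$ is Levitan almost periodic, hence so is $\bar x$ by Theorem \ref{thPC4}; combined with Lagrange stability this makes $\bar x$ almost automorphic, which handles the first two cases, while in the recurrent-Birkhoff case Theorem \ref{thPC4} additionally gives almost recurrence in the sense of Bebutov, and Lagrange stability upgrades this to Birkhoff recurrence. I do not expect a genuine analytic obstacle: everything deep is already in Theorem \ref{thB2.1} (itself built on the common fixed point theorem \ref{th12.3.8*} and Lemma \ref{lB2}). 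The only point requiring care is the bookkeeping dictionary between recurrence notions of the coefficient function $(A,f)\in C(\mathbb R,[\mathfrak B])\times C(\mathbb R,\mathfrak B)$ and of the corresponding point $y_0$ of the Bebutov hull, and likewise between the solution $\bar\psi$ and the point $\bar x$; once this is fixed the statement follows at once.
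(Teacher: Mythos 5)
Your proposal is correct and follows essentially the same route as the paper: the cases of $\tau$-periodicity, Levitan almost periodicity, Bebutov almost recurrence and Poisson stability are obtained directly from Theorem \ref{thB2.1} via the comparability machinery (Theorems \ref{thPC3} and \ref{thPC4}), while the Bohr almost periodic, almost automorphic and Birkhoff recurrent cases are reduced to Levitan almost periodicity plus Lagrange stability of the solution. The only cosmetic difference is that the paper deduces Lagrange stability from boundedness and uniform continuity of $\bar\psi$ rather than from compactness of the hull, which amounts to the same thing.
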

\begin{proof} If the function $(A,f)\in C(\mathbb R,[\mathfrak
B])\times C(\mathbb R,\mathfrak B)$ is $\tau$-periodic
(respectively, Levitan almost periodic, almost recurrent in the
sense of Bebutov, Poisson stable), then this statement follows
from Theorem \ref{thB2.1}.

Suppose that $(A,f)\in C(\mathbb R,[\mathfrak B])\times C(\mathbb
R,\mathfrak B)$ is Bohr almost periodic (respectively, almost
automorphic, recurrent in the sense of Birkhoff), then it is
Levitan almost periodic. By above arguments equation (\ref{eqNH2})
has at least one Levitan almost periodic solution $\bar{\psi}$. On
the other hand the solution $\bar{\psi}$ is bounded and uniformly
continuous on $\mathbb R$ because $(A,f)\in C(\mathbb R,[\mathfrak
B])\times C(\mathbb R,\mathfrak B)$ is bounded on $\mathbb R$.
Thus the function $\bar{\psi}$ is Levitan almost periodic and
stable in the sense of Lagrange and, consequently, it is almost
automorphic (respectively, recurrent in the sense of Birkhoff).
\end{proof}

\begin{remark}\label{remB1} 1. If the function $(A,f)\in C(\mathbb R,[\mathfrak
B])\times C(\mathbb R,\mathfrak B)$ is $\tau$-periodic, then
Corollary \ref{corB2.1} coincides with the Massera's theorem.

2. If the function $(A,f)\in C(\mathbb R,[\mathfrak B])\times
C(\mathbb R,\mathfrak B)$ is Bohr almost periodic, then Corollary
\ref{corB2.1} improves Theorem of Levitan and also Zhikov's
theorems.
\end{remark}

\subsection{Linear Difference Equations}\label{Ssec5.2}

\begin{example}\label{ex14.1}
Consider difference equation
\begin{equation}\label{eq113.2.1}
u(t+1) = A(t)u(t) +f(t),
\end{equation}
where $(A,f)\in C(\mathbb Z,[\mathfrak B])\times C(\mathbb Z,
\mathfrak B).$ Along with equations (\ref{eq113.2.1}) we consider
also its $H$-class (\ref{eq113.2.1}), that is the family of
equations
\begin{equation}\label{eq113.2.2}
v(t+1) = B(t)v(t) +g(t),
\end{equation}
with $(B,g)\in H(A,f):= \overline{\{(A_{\tau},f_{\tau})\ |\ \tau
\in \mathbb Z\}}$, where the bar denotes closure in $C(\mathbb Z,
[ E])\times C(\mathbb Z, E)$. Let $\varphi (t, v, (B,g))$ be the
solution of equation (\ref{eq113.2.2}) that satisfies the
condition $\varphi (0, v,$ $(B,g))$ $ = $ $v$.

We put $Y:= H(A,f)$ and denote the shift dynamical system on
$H(A,f)$ by $(Y,\mathbb Z,\sigma ).$ We put $X := E\times Y$ and
define a dynamical system on $X$ by setting $\pi(\tau,(v,B,g)) :=
(\varphi(\tau, v, (B,g)), B_{\tau},g_{\tau})$ for any $(v, (B,g))
\in
 E\times Y$ and $\tau\in \mathbb Z_{+}.$ Then $\langle
(X,\mathbb Z_{+},\pi),$ $(Y,\mathbb Z,\sigma ),$ $h\rangle$ is a
semigroup nonautonomous dynamical system, where $h := pr_2 : X \to
Y.$
\end{example}

A solution $\varphi$ of equation (\ref{eq113.2.1}) defined on
$\mathbb Z$ is called \cite{scher85} compatible by the character
of recurrence if $\mathfrak N_{(A,f)}\subseteq \mathfrak
N_{\varphi},$ where $\mathfrak N_{(A,f)}:=\{\{t_n\}\subset \mathbb
Z\ |\ (A_{t_n},f_{t_n})\to (A,f)\}$ (respectively, $\mathfrak
N_{\varphi}:=\{\{t_n\}\subset \mathbb Z\ |\ \varphi_{t_n}\to
\varphi \}$).

\begin{theorem}\label{thDB2.1} Assume that the
following conditions are fulfilled:
\begin{enumerate}
\item $\mathfrak B$ is a finite dimensional Banach space; \item
the function $f\in C(\mathbb Z,\mathfrak B)$ and matrix-function
$A\in C(\mathbb Z,[\mathfrak B])$ are jointly Poisson stable;
\item there exits a bounded on $\mathbb Z_{+}$ solution
$\psi(t,u_0,A,f)$ of equation (\ref{eq113.2.1}).
\end{enumerate}

Then the exist at least one compatible solution
$\psi(t,\bar{u},A,f)$ of equation (\ref{eq113.2.1}), where
$\bar{\psi}:=\psi(\cdot,\bar{u},A,f)$.
\end{theorem}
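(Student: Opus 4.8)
The plan is to deduce Theorem~\ref{thDB2.1} from the abstract Theorem~\ref{thB2} exactly as Theorem~\ref{thB2.1} was deduced in the continuous case, replacing the variation-of-constants integral~(\ref{eqLS5.8}) by its discrete counterpart~(\ref{eqLS6.8}). First I would attach to equation~(\ref{eq113.2.1}) and its $H$-class~(\ref{eq113.2.2}) the Bebutov shift dynamical system $(Y,\mathbb Z,\sigma)$ on $Y:=H(A,f)$ together with the linear cocycle $\langle \mathfrak B,\varphi,(Y,\mathbb Z,\sigma)\rangle$ generated by the homogeneous parts $v(t+1)=B(t)v(t)$, as in Example~\ref{ex14.1}. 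Then, by Remark~\ref{remNH1} applied with $\mathbb T=\mathbb Z$, formula~(\ref{eqLS6.8}) defines the associated affine (nonhomogeneous) cocycle $\langle \mathfrak B,\psi,(Y,\mathbb Z,\sigma)\rangle$, and one checks properties 1.--4. of Definition~\ref{defAF1} in the same routine way as in Example~\ref{exLS02.8}, the affinity identity $\psi(t,u,y)-\psi(t,v,y)=\varphi(t,u-v,y)$ being immediate because the inhomogeneous term of~(\ref{eqLS6.8}) does not depend on $u$.

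Next I would verify the three hypotheses of Theorem~\ref{thB2} for this cocycle, with $y_0:=(A,f)\in Y$. Hypothesis (1) is the standing assumption that $\mathfrak B$ is finite dimensional. Hypothesis (2) holds because joint Poisson stability of the pair $(A,f)$ says precisely that the point $y_0=(A,f)$ of the shift system $(H(A,f),\mathbb Z,\sigma)$ is Poisson stable. For hypothesis (3): the assumed solution $\psi(t,u_0,A,f)$ of~(\ref{eq113.2.1}), being bounded on $\mathbb Z_{+}$, has range $\psi(\mathbb Z_{+},u_0,y_0)$ equal to a bounded subset of $\mathfrak B$, hence relatively compact since $\dim\mathfrak B<\infty$; this is also the point where, as in Lemma~\ref{lB1}, one notes that $\mathfrak B^{+}_{y_0}$ is a subspace on which the propagator is uniformly bounded.

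Theorem~\ref{thB2} then produces a point $\bar x=(\bar u,y_0)\in \mathfrak B\times Y$ with $\mathfrak N_{y_0}^{+\infty}\subseteq\mathfrak N_{\bar x}^{+\infty}$, and it only remains to rewrite this in terms of $\bar\psi:=\psi(\cdot,\bar u,A,f)$. Since $\pi(t,\bar x)=(\psi(t,\bar u,y_0),\sigma(t,y_0))$ and $\sigma$ is the shift, a sequence $t_n\to+\infty$ lies in $\mathfrak N_{(A,f)}^{+\infty}$ if and only if it lies in $\mathfrak N_{y_0}^{+\infty}$, in which case $\pi(t_n,\bar x)\to\bar x$ forces $\bar\psi_{t_n}\to\bar\psi$ in $C(\mathbb Z,\mathfrak B)$; thus $\mathfrak N_{(A,f)}^{+\infty}\subseteq\mathfrak N_{\bar\psi}^{+\infty}$, i.e. $\bar\psi$ is the desired compatible solution. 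I do not expect any genuine obstacle: once Theorem~\ref{thB2} is available the argument is essentially bookkeeping, the only step needing a moment's care being the passage from ``bounded on $\mathbb Z_{+}$'' to ``relatively compact'' in hypothesis (3), which uses finite dimensionality in an essential way.
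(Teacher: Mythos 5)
Your proposal is correct and follows essentially the same route as the paper: the paper's own proof is just the two-line reduction ``construct the affine cocycle of Example \ref{ex14.1}, then apply Theorem \ref{thB2}'', and your argument is a careful elaboration of exactly that reduction (including the verification, implicit in the paper, that boundedness plus $\dim\mathfrak B<\infty$ yields relative compactness, and the translation of $\mathfrak N_{y_0}^{+\infty}\subseteq\mathfrak N_{\bar x}^{+\infty}$ back into compatibility of $\bar\psi$).
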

\begin{proof} Let $\langle \mathfrak B, \psi, (H(A,f),\mathbb R,\sigma)
\rangle$ be a cocycle generated by equation (\ref{eq113.2.1}). By
Example \ref{ex14.1} the cocycle $\psi$ is affine. Now applying
Theorem \ref{thB2} to constructed cocycle $\psi$ we complete the
proof of Theorem.
\end{proof}

\begin{coro}\label{corB2.1} Under the conditions of Theorem
\ref{thDB2.1} if the function $(A,f)\in C(\mathbb Z,[\mathfrak
B])\times C(\mathbb Z,\mathfrak B)$ is $\tau$-periodic
(respectively, Bohr almost periodic, almost automorphic, Levitan
almost periodic, almost recurrent in the sense of Bebutov,
recurrent in the sense of Birkhoff, Poisson stable), then equation
(\ref{eq113.2.1}) has at least one $\tau$-periodic (respectively,
almost automorphic, Levitan almost periodic, almost recurrent in
the sense of Bebutov, recurrent in the sense of Birkhoff, Poisson
stable) solution.
\end{coro}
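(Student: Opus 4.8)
The plan is to imitate, in the discrete setting, the proof of Corollary~\ref{corB2.1*}. First I would observe that each of the listed properties of $(A,f)$ --- $\tau$-periodicity, Bohr almost periodicity, almost automorphy, Levitan almost periodicity, almost recurrence in the sense of Bebutov, Birkhoff recurrence, Poisson stability --- implies in particular that $(A,f)$ is jointly Poisson stable, so that all the hypotheses of Theorem~\ref{thDB2.1} hold. That theorem then furnishes a compatible solution $\bar{\psi}:=\psi(\cdot,\bar{u},A,f)$ of equation~(\ref{eq113.2.1}); equivalently, the point $\bar{x}:=(\bar{u},(A,f))$ of the nonautonomous dynamical system of Example~\ref{ex14.1} is comparable with $y_0:=(A,f)$ by the character of recurrence, i.e. $\mathfrak{N}^{+\infty}_{y_0}\subseteq\mathfrak{N}^{+\infty}_{\bar{x}}$.

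When $(A,f)$ is $\tau$-periodic, Levitan almost periodic, almost recurrent in the sense of Bebutov, or Poisson stable, the conclusion is immediate from Theorem~\ref{thPC4}: comparability by the character of recurrence carries each of these four properties from $y_0$ to $\bar{x}$, hence $\bar{\psi}$ has the same type of recurrence.

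For the three remaining cases --- Bohr almost periodic, almost automorphic, Birkhoff recurrent --- I would additionally show that $\bar{x}$ is stable in the sense of Lagrange. In each of them $(A,f)$ is bounded and has a precompact hull, so $\{(A_n,f_n):n\ge 0\}$ is precompact in $C(\mathbb{Z},[\mathfrak{B}])\times C(\mathbb{Z},\mathfrak{B})$. Moreover $\bar{\psi}$ is bounded: its initial value lies in $M=\overline{co}\,K_{y_0}$, and every element of $K_{y_0}$ differs from the initial value $u_0$ of the given bounded solution by a vector of the subspace $\mathfrak{B}^{+}_{y_0}$ of Lemma~\ref{lB1}, so $\bar{\psi}$ differs from $\psi(\cdot,u_0,A,f)$ by a bounded homogeneous solution. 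Since $\mathfrak{B}$ is finite dimensional, $\{\bar{\psi}(n):n\ge 0\}$ is precompact in $\mathfrak{B}$, and therefore $\Sigma_{\bar{x}}=\{(\bar{\psi}(n),(A_n,f_n)):n\ge 0\}$ is precompact, i.e. $\bar{x}$ is Lagrange stable. Now if $(A,f)$ is Bohr almost periodic or almost automorphic then it is Levitan almost periodic, so by Theorem~\ref{thPC4} $\bar{x}$ is Levitan almost periodic, and being also Lagrange stable it is almost automorphic; and if $(A,f)$ is Birkhoff recurrent then it is almost recurrent in the sense of Bebutov, so by Theorem~\ref{thPC4} $\bar{x}$ is almost recurrent in the sense of Bebutov, and being also Lagrange stable it is recurrent in the sense of Birkhoff. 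This gives all the asserted conclusions.

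The only step that is more than bookkeeping is the transfer of Lagrange stability from $(A,f)$ to $\bar{x}$; it hinges on the boundedness of the compatible solution $\bar{\psi}$ (which in turn uses that $\bar{\psi}$ was produced inside the set $M$ of Theorem~\ref{thB2} and that $\dim\mathfrak{B}<\infty$), after which the product structure of $X=\mathfrak{B}\times H(A,f)$ finishes the argument. Everything else reduces to Theorems~\ref{thDB2.1} and~\ref{thPC4} together with the elementary implications among the classes of Poisson stable motions.
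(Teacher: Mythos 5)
Your proposal is correct and follows essentially the same route as the paper's own proof: the first four properties are transferred directly via the compatibility $\mathfrak N_{y_0}\subseteq\mathfrak N_{\bar x}$ from Theorem \ref{thDB2.1} (i.e.\ Theorem \ref{thPC4}), and the Bohr almost periodic, almost automorphic and Birkhoff recurrent cases are reduced to the Levitan almost periodic (resp.\ almost recurrent) case plus Lagrange stability of $\bar x$. The only difference is that you spell out why $\bar\psi$ is bounded (via $M=\overline{co}\,K_{y_0}$ and Lemma \ref{lB1}), a point the paper merely asserts.
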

\begin{proof} If the function $(A,f)\in C(\mathbb Z,[\mathfrak
B])\times C(\mathbb Z,\mathfrak B)$ is $\tau$-periodic
(respectively, Levitan almost periodic, almost recurrent in the
sense of Bebutov, Poisson stable), then this statement follows
from Theorem \ref{thDB2.1}.

Suppose that $(A,f)\in C(\mathbb Z,[\mathfrak B])\times C(\mathbb
Z,\mathfrak B)$ is Bohr almost periodic (respectively, almost
automorphic, recurrent in the sense of Birkhoff), then it is
Levitan almost periodic. By above arguments equation
(\ref{eq113.2.1}) has at least one Levitan almost periodic
solution $\bar{\psi}$ (respectively, almost recurrent in the sense
of Bebutov). On the other hand the solution $\bar{\psi}$ is
bounded on $\mathbb Z$. Thus the function $\bar{\psi}$ is Levitan
almost periodic and stable in the sense of Lagrange and,
consequently, it is almost automorphic (respectively, recurrent).
\end{proof}

\subsection{Linear Functional-Difference Equations with Finite
Delay}\label{Ssec5.3}

Let $r\in\mathbb{Z}_{+}$. Let $[-r,0]$ (respectively, $[a,b]$ with
$a,b\in \mathbb Z$) be an interval in $\mathbb{Z}$ and $\mathfrak
B$ be a finite-dimensional Banach space with the norm $|\cdot|$.
We denote by $\mathcal{C}:=C([-r,0],\mathfrak B)$ (respectively,
$C([a,b],\mathfrak B)$) the Banach space of all functions
$\varphi:[-r,0] \rightarrow E$ with the norm
\begin{equation}\label{eqD1}
||\varphi|| :=\sum_{k=0}^{r} |\varphi(-k)| \nonumber
\end{equation}
for any $\varphi\in \mathcal{C}$. Note that the Banach space
$\mathcal{C}$ is finite dimensional because it is isomorphic to
the space $\mathfrak B^{r+1}$.

Let $c,a\in\mathbb{Z},\ a\geq 0,$ and $u\in C([c-r,c+a],\mathfrak
B).$ We define $u_{t}\in \mathcal{C}$ for any $t\in\lbrack c,c+a]$
by the relation $u_{t}(s):=u(t+s),-r\leq s\leq0.$ Let
$\mathfrak{A}=\mathfrak{A}(\mathcal{C},\mathfrak B)$ be the Banach
space of all linear operators that act from $\mathcal{C}$ into
$\mathfrak B$ equipped with the operator norm, let
$C(\mathbb{Z},\mathfrak{A})$ be the space of all operator-valued
functions $A:\mathbb{Z}\rightarrow\mathfrak{A}$ with the
compact-open topology, and let $(C(\mathbb{Z},\mathfrak{A}),\mathbb{Z}%
,\sigma)$ be the group dynamical system of shifts on $C(\mathbb{Z}%
,\mathfrak{A}).$ Let $H(A):=\overline{\{A_{\tau}\ |\
\tau\in\mathbb{Z}\}},$ where $A_{\tau}$ is the shift of the
operator-valued function $A$ by $\tau$ and the bar denotes closure
in $C(\mathbb{Z},\mathfrak{A}).$

\begin{example}
\label{ex04.2} Consider the non-homogeneous linear
functional-difference equation with finite delay (see, for
example, \cite{Muk,Son1})
\begin{equation}
\label{eq013.2.11}u(t+1) = A(t)u_{t} +f(t),
\end{equation}
where $A\in C(\mathbb{Z},\mathfrak{A})$ and $f\in C(\mathbb{Z},
E).$

\begin{remark}
\label{r6.1*} Denote by $\psi(t,u,(A,f))$ the solution of equation
(\ref{eq013.2.11}) defined on $\mathbb{Z}_{+}$ with initial
condition $\psi(0,u,A,f)=u\in\mathcal{C}$. By $\tilde{\psi
}(t,u,(A,f))$ we will denote below the trajectory of equation (\ref{eq013.2.11}%
), corresponding to the solution $\psi(t,u,(A,f))$, i.e., the
mapping from $\mathbb{Z}_{+}$ into $\mathcal{C}$, defined by
equality $\tilde{\psi}(t,u,(A,f))(s):=\psi(t+s,u,(A,f))$ for all
$t\in\mathbb{Z}_{+}$ and $s\in\lbrack-r,0]$.
\end{remark}

Along with equation (\ref{eq013.2.11}) we consider the family of
equations
\begin{equation}
\label{eq013.2.12}v(t+1) = B(t)v_{t} +g(t),
\end{equation}
where $(B,g)\in H(A,f):= \overline{\{(B_{\tau},f_{\tau})\ |\
\tau\in \mathbb{Z}\}}.$ Let $\tilde{\psi}(t, v,(B,g))$ be the
solution of equation (\ref{eq013.2.12}) satisfying the condition
$\tilde{\psi} (0, v, (B,g)) = v$ and defined for all $t\ge0.$ From
the general properties of linear nonhomogeneous difference
equations  we have
\begin{enumerate}
\item the mapping $\tilde{\psi}: \mathbb Z_{+}\times
\mathcal{C}\times H(A,f)\to \mathcal{C}$ is continuous; \item
$\tilde{\psi}(t+\tau,v,(B,g))=\tilde{\psi}(t,\tilde{\psi}(\tau,v,(B,g)),(B_{\tau},g_{\tau}))$
for any $t,\tau\in \mathbb Z_{+}$, $v\in \mathcal{C}$ and
$(B,g)\in H(A,f)$; \item $\tilde{\psi}(t,\alpha v_1+\beta
v_2,(B,g))=\alpha \tilde{\psi}(t,v_1,(B,g))+\beta
\tilde{\psi}(t,v_2,(B,g))$ for any $t\in\mathbb Z_{+}$,
$v_1,v_2\in \mathcal{C}$, $(B,g)\in H(A,f)$ and
$\alpha,\beta\in\mathbb R$ with $\alpha +\beta =1$.
\end{enumerate}
This means that equation (\ref{eq013.2.11}) generates a linear
nonhomogeneous cocycle $\langle \mathcal{C},$ $\tilde{\psi},$ $
(H(A,f),\mathbb Z,\sigma))\rangle$.

Let $Y := H(A,f)$ and denote the group dynamical system of shifts
on $H(A,f)$ by $(Y,\mathbb{Z},\sigma).$ Let $X :=
\mathcal{C}\times Y$ and let $\pi:= (\varphi,\sigma)$ be the
dynamical system on $X$ defined by the equality $\pi(\tau,(v,
(B,g))) := (\tilde{\psi} (\tau, v, (B,g)),B_{\tau
},g_{\tau}).$ The semi-group nonautonomous system $\langle(X,\mathbb{Z}%
_{+},\pi),$ $(Y,$ $\mathbb{Z},\sigma),$ $h\rangle$ $(h := pr_{2} :
X\to Y )$ is generated by equation (\ref{eq013.2.11}).
\end{example}

\begin{lemma}
\label{l04.3} Let $\psi(t,u,(A,f))$ be a solution of equation
(\ref{eq013.2.11}) which is relatively compact on
$\mathbb{Z}_{+},$ and let
$\langle(X,\mathbb{Z}_{+},\pi),(Y,\mathbb{Z},\sigma),h\rangle$ be
a nonautonomous dynamical system generated by equation
(\ref{eq013.2.11}). Then the set
\[
H^{+}(u,(A,f)):=\overline{\{(\tilde{\psi}(\tau,u,(A,f)),(A_{\tau},f_{\tau
}))\ |\ \tau\geq0\}}%
\]
is conditionally compact with respect to $(X,h,Y)$.
\end{lemma}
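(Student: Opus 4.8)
The plan is to reduce the functional-difference setting to the ordinary (finite-dimensional) case handled by Lemma \ref{l3.1}, exploiting the fact that the phase space $\mathcal C = C([-r,0],\mathfrak B)$ is finite dimensional (isomorphic to $\mathfrak B^{r+1}$). First I would observe that the nonautonomous system $\langle (X,\mathbb Z_+,\pi),(Y,\mathbb Z,\sigma),h\rangle$ generated by equation (\ref{eq013.2.11}) is exactly the system attached to the linear nonhomogeneous cocycle $\langle \mathcal C, \tilde\psi, (H(A,f),\mathbb Z,\sigma)\rangle$ constructed in Example \ref{ex04.2}, with $X = \mathcal C\times Y$, $\pi = (\tilde\psi,\sigma)$ and $h = pr_2$. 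Thus the statement to be proved is precisely the assertion that $H^+(x_0)$ is conditionally compact, where $x_0 := (u,(A,f))\in X$.

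Next I would verify the single hypothesis needed to invoke Lemma \ref{l3.1}: that the positive semitrajectory of the cocycle through $(u,(A,f))$ has compact closure. By assumption $\psi(t,u,(A,f))$ is relatively compact on $\mathbb Z_+$, i.e. the set $\{\psi(t,u,(A,f))\ |\ t\in\mathbb Z_+\}\subseteq\mathfrak B$ is precompact. I then need to pass from the solution to the trajectory, i.e. to show that $Q^+:=\overline{\{\tilde\psi(\tau,u,(A,f))\ |\ \tau\in\mathbb Z_+\}}\subseteq\mathcal C$ is compact. This is where the finiteness of the delay and of $\mathfrak B$ enter: for $\tau\ge 0$ the element $\tilde\psi(\tau,u,(A,f))\in\mathcal C$ is the function $s\mapsto\psi(\tau+s,u,(A,f))$ on $[-r,0]$, and since $\mathcal C$ is finite dimensional (isomorphic to $\mathfrak B^{r+1}$ via $\varphi\mapsto(\varphi(0),\varphi(-1),\dots,\varphi(-r))$), precompactness in $\mathcal C$ is equivalent to boundedness, which follows at once from the precompactness (hence boundedness) of the solution values $\{\psi(t,u,(A,f))\ |\ t\ge -r\}$ — note only finitely many extra values $\psi(-1,u,(A,f)),\dots,\psi(-r,u,(A,f))$ of the initial data are adjoined, so boundedness is preserved. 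Hence $Q^+$ is compact.

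Having established that $Q_{(u,(A,f))}^+:=\overline{\{\tilde\psi(\tau,u,(A,f))\ |\ \tau\in\mathbb Z_+\}}$ is compact, Lemma \ref{l3.1} applied to the cocycle $\langle\mathcal C,\tilde\psi,(H(A,f),\mathbb Z,\sigma)\rangle$ yields immediately that the semi-hull $H^+(x_0)=\overline{\{\pi(\tau,x_0)\ |\ \tau\in\mathbb Z_+\}}=H^+(u,(A,f))$ is conditionally compact with respect to $(X,h,Y)$, which is the desired conclusion. The only genuinely substantive point is the reduction described in the second paragraph — recognizing that relative compactness of the $\mathfrak B$-valued solution forces relative compactness of the $\mathcal C$-valued trajectory, which hinges on $r<\infty$ and $\dim\mathfrak B<\infty$; once this is in place the result is a direct citation of Lemma \ref{l3.1}, so I do not anticipate any further obstacle.
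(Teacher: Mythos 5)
Your proposal is correct and follows essentially the same route as the paper, whose entire proof is the one-line citation ``This statement follows from Lemma \ref{l3.1}.'' The only difference is that you explicitly verify the hypothesis of Lemma \ref{l3.1} --- passing from relative compactness of the $\mathfrak B$-valued solution to relative compactness of the $\mathcal C$-valued trajectory via finite dimensionality of $\mathcal C\cong\mathfrak B^{r+1}$ --- a step the paper leaves implicit.
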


\begin{proof}
This statement follows from Lemma \ref{l3.1}.
\end{proof}

\begin{theorem}\label{thDB2.2} Assume that the
following conditions are fulfilled:
\begin{enumerate}
\item $\mathfrak B$ is a finite dimensional Banach space; \item
the function $f\in C(\mathbb Z,\mathfrak B)$ and matrix-function
$A\in C(\mathbb Z,\mathfrak{A})$ are jointly Poisson stable; \item
there exits a bounded on $\mathbb Z_{+}$ solution
$\psi(t,u_0,A,f)$ of equation (\ref{eq013.2.11}).
\end{enumerate}

Then the exist at least one compatible solution
$\psi(t,\bar{u},A,f)$ of equation (\ref{eq013.2.11}), where
$\bar{\psi}:=\psi(\cdot,\bar{u},A,f)$.
\end{theorem}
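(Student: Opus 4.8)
The plan is to follow verbatim the strategy used for Theorems \ref{thB2.1} and \ref{thDB2.1}: transport the problem into the language of affine cocycles and then invoke the abstract comparability result Theorem \ref{thB2}. First I would recall, from Example \ref{ex04.2}, that equation (\ref{eq013.2.11}) together with its $H$-class (\ref{eq013.2.12}) generates a linear nonhomogeneous cocycle $\langle \mathcal{C}, \tilde{\psi}, (H(A,f),\mathbb Z,\sigma)\rangle$ over the shift dynamical system $(Y,\mathbb Z,\sigma)$ with $Y:=H(A,f)$, and that the fibre maps $\tilde{\psi}(t,\cdot,(B,g)):\mathcal{C}\to\mathcal{C}$ are affine (the affine property listed in Example \ref{ex04.2}). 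The key structural observation, already made in the text, is that the phase space $\mathcal{C}=C([-r,0],\mathfrak B)$ is \emph{finite dimensional}, being isomorphic to $\mathfrak B^{r+1}$; this is exactly what makes Theorem \ref{thB2} applicable, just as in the ordinary difference-equation case of Example \ref{ex14.1}.

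Next I would verify, one by one, the three hypotheses of Theorem \ref{thB2} for the cocycle $\tilde{\psi}$ and the base point $y_0:=(A,f)\in Y$. Condition (1), finite dimensionality of the fibre, holds by the preceding paragraph. Condition (2), that $y_0$ be Poisson stable in $Y$, is precisely the joint Poisson stability of $(A,f)$ assumed in hypothesis (2), since then $(A,f)$ is a Poisson stable point of the shift dynamical system on $H(A,f)$. For condition (3) I would argue that the solution $\psi(t,u_0,A,f)$, bounded on $\mathbb Z_{+}$ by hypothesis (3), gives a relatively compact positive semi-trajectory in $\mathcal{C}$: the set $\{\psi(t,u_0,A,f):\ t\ge -r\}$ is bounded in $\mathfrak B$, hence $\{\tilde{\psi}(t,u_0,(A,f)):\ t\ge 0\}\subset\mathcal{C}$ is bounded, and since $\mathcal{C}$ is finite dimensional it is relatively compact; one may equivalently invoke Lemma \ref{l04.3} to obtain conditional compactness of the semi-hull, paralleling the role of Lemma \ref{l3.1} in the proof of Theorem \ref{thB2.1}.

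Once all hypotheses are checked, Theorem \ref{thB2} yields a point $\bar{x}=(\bar{u},y_0)\in X=\mathcal{C}\times Y$ which is comparable by the character of recurrence with $y_0$, i.e. $\mathfrak N_{y_0}^{+\infty}\subseteq\mathfrak N_{\bar{x}}^{+\infty}$. Unwinding the identification between points of the fibre over $y_0$ and solutions of (\ref{eq013.2.11}) (as set up in Example \ref{ex04.2}), this says exactly that $\bar{\psi}:=\psi(\cdot,\bar{u},A,f)$ is a solution of (\ref{eq013.2.11}) compatible by the character of recurrence with $(A,f)$, which is the assertion of the theorem.

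I expect the only genuinely delicate point to be the verification of condition (3): one has to be careful that boundedness of the \emph{solution values} on $\mathbb Z_{+}$ really translates into relative compactness of the \emph{trajectory} $t\mapsto\tilde{\psi}(t,u_0,(A,f))$ in the delay space $\mathcal{C}$. This is immediate here precisely because $\mathcal{C}$ is finite dimensional — which is why the finite dimensionality hypothesis on $\mathfrak B$ is indispensable and cannot be dropped — but it is the step that genuinely distinguishes the functional-difference setting from the ODE and plain difference settings of the previous subsections, and it is where Lemma \ref{l04.3} does its work.
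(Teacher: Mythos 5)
Your proposal is correct and follows essentially the same route as the paper: pass to the affine cocycle $\langle \mathcal{C},\tilde{\psi},(H(A,f),\mathbb Z,\sigma)\rangle$ of Example \ref{ex04.2}, use the finite dimensionality of $\mathcal{C}\cong\mathfrak B^{r+1}$, and apply Theorem \ref{thB2}. The only difference is that you spell out the verification of the three hypotheses of Theorem \ref{thB2} (in particular that boundedness of the solution on $\mathbb Z_{+}$ yields relative compactness of the trajectory in the finite-dimensional space $\mathcal{C}$), which the paper's proof leaves implicit.
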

\begin{proof} Let $\langle \mathcal{C}, \tilde{\psi}, (H(A,f),\mathbb Z,\sigma)
\rangle$ be a cocycle generated by equation (\ref{eq013.2.11}). By
Example \ref{ex04.2} the cocycle $\tilde{\psi}$ is affine. Note
that the Hilbert space $\mathcal{C}$ is finite dimensional and,
consequently, it is uniformly convex. Now applying Theorem
\ref{thB2} to constructed cocycle $\tilde{\psi}$ we complete its
proof.
\end{proof}

\begin{coro}\label{corB2.2} Under the conditions of Theorem
\ref{thDB2.2} if the function $(A,f)\in C(\mathbb
Z,\mathfrak{A})\times C(\mathbb Z,\mathfrak B)$ is $\tau$-periodic
(respectively, Bohr almost periodic, almost automorphic, Levitan
almost periodic, almost recurrent in the sense of Bebutov,
recurrent in the sense of Birkhoff, Poisson stable), then equation
(\ref{eq013.2.11}) has at least one $\tau$-periodic (respectively,
almost automorphic, Levitan almost periodic, almost recurrent in
the sense of Bebutov, recurrent in the sense of Birkhoff, Poisson
stable) solution.
\end{coro}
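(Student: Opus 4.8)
The plan is to follow, essentially verbatim, the arguments used for Corollaries \ref{corB2.1*} and \ref{corB2.1}, with the phase space now the finite-dimensional Banach space $\mathcal{C}=C([-r,0],\mathfrak B)$ (isomorphic to $\mathfrak B^{r+1}$, hence uniformly convex). By Example \ref{ex04.2}, equation (\ref{eq013.2.11}) and its $H$-class generate the affine cocycle $\langle\mathcal{C},\tilde{\psi},(H(A,f),\mathbb Z,\sigma)\rangle$ and the nonautonomous dynamical system $\langle(X,\mathbb Z_{+},\pi),(Y,\mathbb Z,\sigma),h\rangle$ with $Y=H(A,f)$ and $X=\mathcal{C}\times Y$; write $y_0\in Y$ for the point corresponding to the pair $(A,f)$. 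Under the stated hypotheses (which include those of Theorem \ref{thDB2.2}) this cocycle satisfies the assumptions of Theorem \ref{thDB2.2}, so there is a compatible solution $\bar{\psi}:=\psi(\cdot,\bar u,A,f)$, i.e. a point $\bar{x}=(\bar u,y_0)\in X$ with $\mathfrak N_{y_0}^{+\infty}\subseteq\mathfrak N_{\bar x}^{+\infty}$. Since $y_0$ is positively Poisson stable (each of the listed recurrence types implies this), Theorem \ref{thPC3} promotes this inclusion to comparability of $\bar{x}$ with $y_0$ by the character of recurrence.

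If $y_0$ is $\tau$-periodic, Levitan almost periodic, almost recurrent in the sense of Bebutov, or Poisson stable, I would then apply Theorem \ref{thPC4} directly: comparability forces $\bar{x}$, and hence the solution $\bar{\psi}$, to have the same recurrence type. This disposes of four of the seven cases; it is just the concrete instance of Corollary \ref{corB1}.

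For the remaining cases --- $y_0$ Bohr almost periodic, almost automorphic, or recurrent in the sense of Birkhoff --- each of these implies that $y_0$ is Levitan almost periodic, so the previous step gives a Levitan almost periodic solution $\bar{\psi}$; each also implies that $y_0$ is Lagrange stable, i.e. the hull $H(A,f)$ is compact. On the other hand, by Lemma \ref{l04.3} the semi-hull $H^{+}(\bar u,(A,f))$ is conditionally compact and, by the construction in Theorem \ref{thDB2.2} (via Lemma \ref{lB2}), $\bar{x}$ lies in the compact convex fibre $M=\overline{co}K_{y_0}$, so $\Sigma_{\bar x}$ is conditionally precompact; a conditionally precompact set over a compact base is precompact, hence $\bar{x}$ is Lagrange stable. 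A point that is Levitan almost periodic and Lagrange stable is, by definition, almost automorphic, which settles the Bohr almost periodic and almost automorphic cases. If $y_0$ is moreover recurrent in the sense of Birkhoff, it is almost recurrent in the sense of Bebutov, so Theorem \ref{thPC4} gives that $\bar{\psi}$ is almost recurrent in the sense of Bebutov, and combined with Lagrange stability this yields that $\bar{\psi}$ is recurrent in the sense of Birkhoff.

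The step I expect to require the most care is the Lagrange-stability argument: one must verify that the compatible solution singled out by Theorem \ref{thDB2.2} genuinely lies in the compact convex fibre $M$, so that conditional compactness of the semi-hull can be upgraded to precompactness of the whole trajectory on $\mathbb Z$ rather than mere boundedness on $\mathbb Z_{+}$. Once this is in place, the remainder is a routine transcription of the proofs of Corollaries \ref{corB2.1*} and \ref{corB2.1}, using that $\mathcal{C}$ is finite-dimensional and hence uniformly convex.
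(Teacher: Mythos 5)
Your proposal is correct and follows essentially the same route as the paper: reduce the Bohr almost periodic, almost automorphic and Birkhoff recurrent cases to the Levitan almost periodic (respectively, almost recurrent) case via Theorem \ref{thDB2.2} and Theorem \ref{thPC4}, and then upgrade using Lagrange stability of the solution. Your treatment of the Lagrange-stability step (locating $\bar{x}$ in the compact fibre $M$ and using conditional compactness over the compact hull) is merely a more explicit version of the paper's remark that $\bar{\psi}$ is bounded on $\mathbb Z$, which in the finite-dimensional space $\mathcal{C}$ already yields precompactness of the trajectory.
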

\begin{proof} If the function $(A,f)\in C(\mathbb Z,\mathfrak{A})\times C(\mathbb Z,\mathfrak B)$ is $\tau$-periodic
(respectively, Levitan almost periodic, almost recurrent in the
sense of Bebutov, Poisson stable), then this statement follows
from Theorem \ref{thDB2.2}.

Suppose that $(A,f)\in C(\mathbb Z,\mathfrak{A})\times C(\mathbb
Z,E)$ is Bohr almost periodic (respectively, almost automorphic,
recurrent in the sense of Birkhoff), then it is Levitan almost
periodic. By above arguments equation (\ref{eq013.2.11}) has at
least one Levitan almost periodic solution $\bar{\psi}$
(respectively, almost recurrent in the sense of Bebutov). On the
other hand the solution $\bar{\psi}$ is bounded on $\mathbb Z$.
Thus the function $\bar{\psi}$ is Levitan almost periodic and
stable in the sense of Lagrange and, consequently, it is almost
automorphic (respectively, recurrent in the sense of Birkhoff).
\end{proof}

\end{document}